\numberwithin{equation}{section}
\newtheorem{lem}{Lemma}[section]
\newtheorem{thm}{Theorem}[section]
\newtheorem{proposition}[thm]{Proposition}
\newtheorem{cor}[thm]{Corollary}
\theoremstyle{remark}
\newtheorem{assum}{Assumption}
\newtheorem{heuristics}{Heuristic}
\tikzset{
  box/.style = {
    rectangle,
    draw,
    rounded corners,
    minimum height=10mm,
    text width=1.2cm,     
    align=center,
    font=\sffamily
  },
  arrow/.style = {-{Stealth[length=6pt,width=6pt]}, thick}
}
\newcommand{\nn}{\nonumber}
\newcommand{\R}{{\mathbb R}}
\newcommand{\N}{{\mathbb N}}
\renewcommand{\tilde}{\widetilde}
\renewcommand{\hat}{\widehat}
\renewcommand{\bar}{\overline}
\newcommand{\mc}[1]{\mathcal{#1}}
\newcommand{\mb}[1]{\mathbf{#1}}
\newcommand{\EE}{\mathbb{E}}
\newcommand{\RR}{\mathbb{R}}
\newcommand{\NN}{\mathbb{N}}
\newcommand{\PP}{\mathbb{P}}
\newcommand{\DD}{\mathcal{D}}
\newcommand{\ODD}{\overline{\mathcal{D}}}
\newcommand{\OX}{\overline{X}}
\newcommand{\OL}{\overline{L}}
\newcommand{\TE}{\mathcal{E}}
\newcommand{\la}{\langle}
\newcommand{\ra}{\rangle}
\DeclareMathOperator\diag{diag}
\newcommand{\FundingLogos}{%
  \raisebox{0pt}{\includegraphics[height=1.5cm]{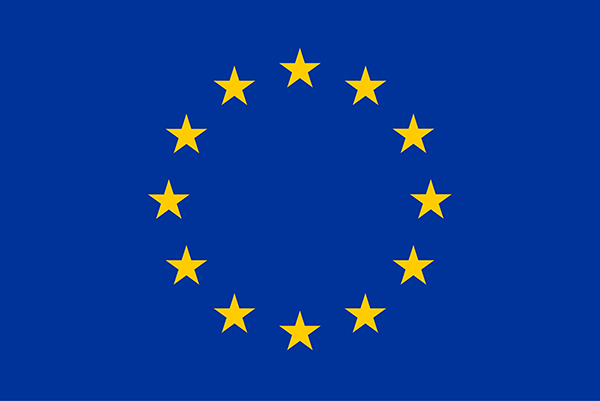}}%
  \hspace{1em}%
  \raisebox{0pt}{\includegraphics[height=1.5cm]{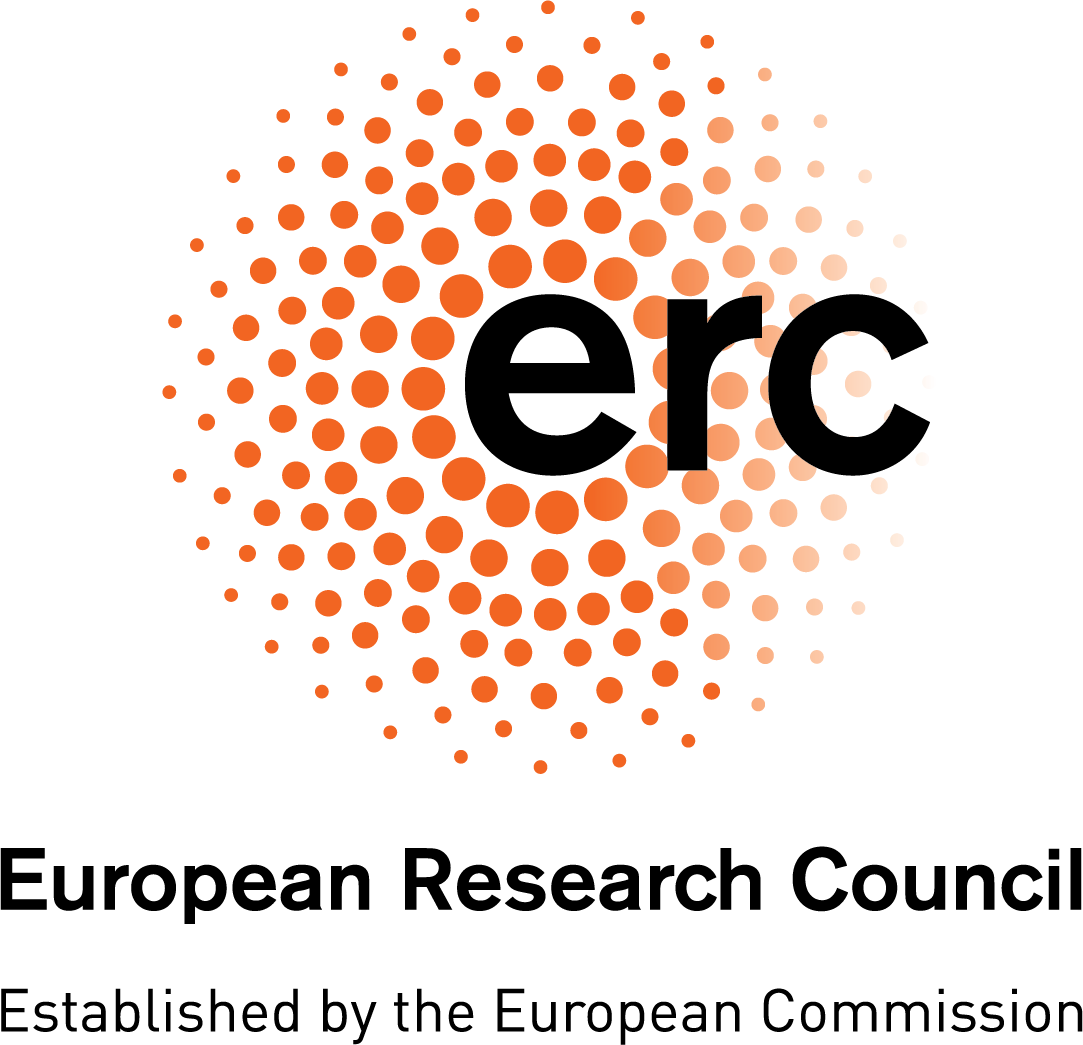}}%
}
\begin{document}

\title[CBO over domains with boundaries and heuristics]{Constrained Consensus-Based Optimization and Numerical Heuristics for the Few Particle Regime}

\author[1]{\fnm{Jonas} \sur{Beddrich}}\email{beddrich@ma.tum.de}
\author[2]{\fnm{Enis} \sur{Chenchene}}\email{enis.chenchene@univie.ac.at}
\author*[1,3,4]{\fnm{Massimo} \sur{Fornasier}}\email{massimo.fornasier@cit.tum.de}
\author[5]{\fnm{Hui} \sur{Huang}}\email{huihuang1@hnu.edu.cn}
\author[1]{\fnm{Barbara} \sur{Wohlmuth}}\email{wohlmuth@cit.tum.de}

\affil[1]{\orgdiv{Department of Mathematics}, \orgname{Technical University of Munich}, \orgaddress{\city{Garching by Munich}, \country{Germany}}}

\affil[2]{\orgdiv{Faculty of Mathematics}, \orgname{University of Vienna}, \orgaddress{\city{Vienna}, \country{Austria}}}

\affil[3]{\orgdiv{Munich Data Science Institute}, \orgname{Technical University of Munich}, \orgaddress{\city{Garching by Munich}, \country{Germany}}}

\affil[4]{\orgname{Munich Center for Machine Learning}, \orgaddress{\city{Munich}, \country{Germany}}}

\affil[5]{\orgdiv{School of Mathematics}, \orgname{Hunan University}, \orgaddress{\city{Changsha}, \country{China}}}

\abstract{
Consensus-based optimization (CBO) is a versatile multi-particle optimization method for performing nonconvex and nonsmooth global optimizations in high dimensions. Proofs of global convergence in probability have been achieved for a broad class of objective functions in unconstrained optimizations. In this work we adapt the algorithm for solving constrained optimizations on compact and unbounded domains with boundary by leveraging emerging reflective boundary conditions. In particular, we close a relevant gap in the literature by providing a global convergence proof for the many-particle regime comprehensive of convergence rates.
On the one hand, for the sake of minimizing running cost, it is desirable to keep the number of particles small. On the other hand, reducing the number of particles implies a diminished capability of exploration of the algorithm.  Hence numerical heuristics are needed to ensure convergence of CBO in the few-particle regime.
In this work, we also significantly improve the convergence and complexity of CBO by utilizing an adaptive region control mechanism and by choosing geometry-specific random noise. In particular, by combining a hierarchical noise structure with a multigrid finite element method, we are able to compute global minimizers for a constrained p-Allen--Cahn problem with obstacles, a very challenging variational problem.
}

\keywords{Constrained global optimization, Consensus-based optimization, Heuristics}
\pacs[MSC Classification]{90C26, 90C56, 90C59, 65K10, 35Q90, 60H10}


\maketitle

\tableofcontents

\section{Introduction}
{
In this paper we are concerned with theoretical guarantees for global optimization, namely 
ensuring computability of global minimizers
\begin{equation}\label{globopt}
x^* \in \arg\min_{x\in\ODD}\TE(x)
\end{equation}
of a possibly nonconvex and nonsmooth objective function $\TE: \ODD \subset \mathbb R^d \to \R$. Here $\DD$ is a domain in $\mathbb R^d$ and $\ODD$ its closure. This problem can be rightfully considered the ultimate challenge of mathematical optimization, because there are no clear directional or geometrical principles  to guide the search for global minima \cite{glob24,glob22}. Local optimization techniques like gradient descent or (quasi-)Newton's methods struggle with nonconvex problems because they often get trapped in possibly numerous local minima. 
Even solving a simple polynomial equation can highlight these limitations, requiring multiple independent runs  and possibly randomizing the initial conditions to be able to find all solutions. \\
Very-many-runs first or second order methods and metaheuristics, such as Simulated Annealing, Genetic Algorithms, Particle Swarm Optimization,  Ant Colony Optimization, and other evolution strategies \cite{blum2003metaheuristics,GendPotv13}, have achieved extraordinary empirical successes in global optimization and have been extensively benchmarked, as evidenced by platforms like COCO \cite{COCO21}.
For Simulated Annealing the convergence in probability can be guaranteed by the large time analysis of the corresponding Fokker--Planck equation with vanishing diffusion by leveraging $\log$-Sobolev inequalities as forms of Polyak--Łojasiewicz conditions, see \cite{chizat2022meanfield} and the older results \cite{chiang1987diffusions,geman1986diffusions,holley1988simulated}. Unfortunately, the theoretically guaranteed rate of convergence in time $\mathcal O(\log \log(t)/\log(t))$ of Simulated Annealing is rather poor, see \cite[Theorem 4.1]{chizat2022meanfield}. 
Instead, proving the global  convergence of the widely used multi-agent methods Genetic Algorithms \cite{reeves2010genetic} and Particle Swarm Optimization \cite{kennedy1995particle} with quantitative rates has remained an elusive challenge for decades, until recent breakthroughs did connect these to multi-agent dynamics \cite{borghi2023kinetic,huang2022global,grassi2020particle,carrillo2018analytical}. More specifically, the most recent groundbreaking paradigm of Consensus-Based Optimization (CBO) \cite{pinnau2017consensus} introduced by Pinnau et al. and its first analytical description \cite{carrillo2018analytical} by Carrillo et al. have introduced the right mathematical tools for attacking the long-standing problem of rigorously proving global convergence for such methods.
CBO  aims at fusing the cooling strategy of Simulated Annealing \cite{kirkpatrick1983optimization,kirkpatrick1984optimization,chiang1987diffusions,geman1986diffusions,holley1988simulated} towards Gibbs equilibria with the space exploration by multiple particles/explorers as in Particle Swarm Optimization \cite{kennedy1995particle,zhang2015comprehensive} by taking advantage of a consensus mechanism as in the Cucker--Smale alignment model \cite{carrillo2010asymptotic} or the  Hegselmann--Krause opinion formation model \cite{hegselmann2002opinion}, in which an average orientation or opinion is obtained from the individual observations. 
For the case $\DD = \mathbb R^d$ of unconstrained optimization as first introduced and addressed in \cite{pinnau2017consensus}, the equations defining the iterates $X^i_k$ of the CBO algorithm read ($i=1,\dots,N$ labelling the particles, and $k=0,\dots,K$ denoting the iterates)
\begin{align}
\label{eq:B1-CBO1}
X^i_{(k+1)\Delta t} &= X^i_{k\Delta t} - \Delta t \lambda(X^i_{k\Delta t}-X_{\alpha}( \rho_{k\Delta t}^N)) + \sqrt{\Delta t}\sigma |X^i_{k\Delta t}-X_{\alpha}( \rho_{k\Delta t}^N)| N^i_k(0,1), 
\end{align}
where ${\mathcal E}$ is the objective function to be minimized,
$X_{\alpha}( \rho_{k\Delta t}^N) = \frac{1 }{\sum_{i=1}^N \omega_\alpha^{\mathcal E}(X_{k\Delta t}^i)} \sum_{i=1}^N X_{k\Delta t}^i \omega_\alpha^{\mathcal E}(X_{k\Delta t}^i)$ is called the instantaneous consensus-point,
 $\omega_\alpha^{\mathcal E}(x)=e^{-\alpha {{\mathcal E}}(x)}$ is  the Gibbs weight, and $N^i_k(0,1)$ is an independent standard Gaussian random vector. The other constants $\lambda,\sigma,\Delta t,\alpha$ are all positive parameters of the algorithm, describing drift strength, volatility, time discretization, and inverse temperature respectively.
The initial optimizers $X_0^i$ are drawn i.i.d. at random according to a given probability distribution $\rho_0$. The choice of the weight function $\omega_\alpha^\TE(x)$ comes from  the  well-known Laplace's principle \cite{miller2006applied,Dembo2010}, which states that for any probability measure $\mu\in\mc{P}( \RR^d )$, there holds 
\begin{equation}\label{lap_princ}
	\lim\limits_{\alpha\to\infty}\left(-\frac{1}{\alpha}\log\left(\int_{ \RR^d }\omega_\alpha^\TE(x)\mu(dx)\right)\right)=\inf\limits_{x \in \rm{supp }(\mu)} \TE(x)\,.
\end{equation}
The algorithm \eqref{eq:B1-CBO1} has a very simple formulation (it can be implemented in couple of lines of code) with a computational cost of $\mathcal O(N)$ and is easily parallelizable, see, e.g., \cite{benfenati22,JMLR:v25:23-0764}. Moreover, it is  based on sole pointwise evaluations of the objective function $\mathcal E$, hence, it does not require any higher order information. \\
 Since 2017 CBO has attracted enormous attention in virtue of its elegant formulation, its ability of breaking locality, and its astounding potential of providing a blueprint for proving convergence of other evolution strategies such as local Monte Carlo, Metropolis--Hastings, CMA-ES, see \cite{riedl2023gradient}, and Particle Swarm Optimization, see \cite{grassi2020particle,cipriani2022zero,huang2022global}. In Figure \ref{fig:CBOworld} we illustrate how CBO is collocated within the global optimization landscape, with a full explanation of the relationships with different relevant methods in the Appendix.\\

\begin{figure} 
\centering
\begin{tikzpicture}[xshift=0cm, node distance=15mm and 30mm]
  \node (mid) { }; 
  \node[box, above=of $(mid)$] (sa) {SA};
  \node[box, below=of $(mid)$] (pso) {PSO};
  \node[box, right=10mm of mid] (cbo) {CBO};
  \node[box, right=25mm of $(cbo)$] (ch) {CH};
  \node[box, below=of $(ch)$] (es) {ES};
  \node[box, right=25mm of $(ch)$] (mms) {MMS};
  \node[box, below=of $(mms)$] (tr) {TR};
  
 \node[box, right=25mm of $(mms)$] (sgd) {SGD};
 \draw[arrow] (sa.302.5) -- (cbo.112.5) 
 node[pos=0.4,fill=white] {\cite{fornasier2021consensus,pinnau2017consensus}};
  \draw[arrow] (pso.67.5) -- (cbo.247.5)
  node[pos=0.4, fill=white] {\cite{grassi2020particle,cipriani2022zero,huang2022global,pinnau2017consensus}};
  \draw[arrow] (cbo.east) -- (ch.west)
  node[pos=0.45,fill=white] {\cite{fornasier2025, riedl2023gradient}};
  \draw[arrow] (ch.south) -- (es.north)
  node[pos=0.4,fill=white] {\cite{riedl2023gradient,roith25}};
  \draw[arrow] (ch.east) -- (mms.west)
  node[pos=0.45,fill=white] {\cite{riedl2023gradient}};
  \draw[arrow] (mms.south) -- (tr.north)
  node[pos=0.4,fill=white] {\eqref{app3}-\eqref{app6} };
  \draw[arrow] (mms.east) -- (sgd.west)
  node[pos=0.45,fill=white] {\cite{riedl2023gradient}};
\end{tikzpicture}

\caption{Collocation of Consensus-Based Optimization (CBO) within the global optimization landscape. Here SA$=$``Simulated Annealing", PSO$=$``Particle Swarm Optimization", CH$=$``Consensus Hopping", ES$=$``Evolution Strategies", MMS$=$``Minimizing Movement Scheme" (or Proximal Point Method), TR=``Trust-Region", SGD=``Stochastic Gradient Descent". A full explanation of the connections is reported in the Appendix.}
\label{fig:CBOworld}
\end{figure}

 Several proofs of global convergence of CBO have been proposed \cite{carrillo2018analytical,carrillo2019consensus,ha2021convergence,doi:10.1142/S0218202522500245,huang2025faithful}, prominently the most comprehensive one of one of us and collaborators \cite{fornasier2021consensus}, which provides global convergence in probability with {\it quantitative} convergence rates depending on $\Delta t \to 0$, $N\to \infty$, $\alpha\to \infty$ under mild conditions on $\mathcal E$, namely local Lipschitz continuity and local growth at minimizers. 
This convergence result is based first on proving that the dynamics for $\Delta t \to 0$ and $N \to \infty$ in \eqref{eq:B1-CBO1} can be described by its mean-field approximation \cite{huang2022mean,gerber2023mean,huang2025uniform}
\begin{equation}\label{eq:B1-CBO2}
		d \overline{X}_t
		=-\lambda\left(\overline{X}_t-X_\alpha(\rho_t)\right)dt+\sigma |\overline{X}_t-X_\alpha(\rho_t)| d B_t.
	\end{equation}
 where  $X_\alpha(\rho)=\frac{\int x \omega^{{\mathcal E}}_\alpha(x) d\rho(x)}{\int \omega^{\mathcal E}_\alpha(x) d\rho(x)}$ and $\rho_t= \operatorname{Law}(\overline{X}_t)$, which fulfills the nonlinear Fokker--Planck equation
 \begin{align} \label{eq:B1-fokker_planck}
	\partial_t\rho_t
	= \lambda\operatorname{div} \big(\!\left(x - X_\alpha(\rho_t)\right)\rho_t\big)
	+ \frac{\sigma^2}{2}\Delta\big(|x-X_\alpha(\rho_t)|^2\rho_t\big).
\end{align}
The large time behavior of \eqref{eq:B1-fokker_planck} that finally explains the convergence of \eqref{eq:B1-CBO1} is a particularly challenging problem, because \eqref{eq:B1-fokker_planck} does not fulfill a recognizable gradient flow structure and it is not prone to more common techniques such as entropy dissipation methods. For this reason, in \cite{fornasier2021consensus} an {\it ad hoc} technique based on a quantitative version of Laplace's principle, see \cite[Proposition 21 and Proposition 23]{fornasier2021consensus}, had to be devised resulting in  proving that, for $\alpha$ large enough, the squared Wasserstein distance ${W_2}^2(\rho_t, \delta_{x^*})$ (here $x^*$ is the assumed unique global minimizer of $\TE$) is a natural Lyapunov functional for \eqref{eq:B1-fokker_planck} at finite time, with exponential decay
\begin{equation}\label{eq:contr}
W_2^2(\rho_t, \delta_{x^*}) \leq W_2^2(\rho_0, \delta_{x^*}) e^{-(1-\theta)(2\lambda - d \sigma^2) t}, 
\end{equation}
for a suitable $\theta \in (0,1)$ for $t \in [0,T^*]$ and 
$W_2^2(\rho_{T^*}, \delta_{X^*}) \leq \epsilon$. The meaning of \eqref{eq:contr} is that, remarkably, in the many-particle limit the fundamental mechanism of CBO is to transform a nonconvex optimization problem into the canonical problem of minimizing the squared distance to the global minimizer.
Hence, the average trajectories of the particles move precisely in straight lines towards the global minimizer, demonstrating robust convergence even in the presence of a nonconvexity.
Standard results of numerical approximation of  solutions of  SDEs  \cite{platen1999introduction} one obtains that the numerical scheme \eqref{eq:B1-CBO1} converges for $\Delta t \to 0$ to a system of SDE;
a quantitative mean-field limit \cite[Theorem 2.6]{gerber2023mean} for $N\to \infty$ shows that the SDE system converges in suitable sense to the mean-field model  \eqref{eq:B1-CBO2}; finally the large time analysis of \eqref{eq:B1-fokker_planck} allows to conclude \eqref{eq:contr}. The combination of these three approximation levels yields \cite[{Theorem 3.8}]{fornasier2021consensus}, showing convergence in probability of \eqref{eq:B1-CBO1}
\begin{align}\label{B1-ownresult2}
  & \mathbb P\Bigg( \Bigg| \frac{1}{N} \sum_{i=1}^N X_{T^*}^i-x^*\Bigg|^2 \leq \varepsilon\Bigg) \geq \\ \nonumber 
  & 1 - \left [ \varepsilon^{-1} (C_{\mathrm{NA}}\Delta t+C_{\mathrm{MFA}} N^{-1}+ C_0 e^{-(1-\theta) (2 \lambda - d \sigma^2) T^*})+\delta \right ],
\end{align}
where $\varepsilon,\delta,\theta$ are arbitrarily small positive constants and $T^*=\Delta t K$ is a sufficiently large running time depending on $\varepsilon$.
\\

In the meanwhile, the original CBO model~\cite{pinnau2017consensus} has been adapted to address a multitude of different problems, namely solving for multiple minimizers, saddle point problems, equilibria of games, multiobjective optimizations and more. It has been also employed with success in a  large spectrum of real-life applications. The related literature is quite vast by now. Rather than attempting to include a necessarily incomplete account of this very fast growing field,  we refer to the review paper \cite{totzeck2021trends}, and to \cite{fornasier2024polarized} for a more recent and relatively comprehensive report. Below we limit ourselves to recall the relevant results, which are more focused on the specific subject of the present work.
\\

In the following let $\DD\subsetneq \R^d$ be an open convex set with the boundary $\partial \DD$ and, by $\ODD$ we denote  again the closure of $\DD$.
The scope of the present paper is to close one of a relevant gaps left in the literature, that is providing a suitable formulation for CBO and related theoretical guarantees to solve global optimization problems of the type \eqref{globopt} constrained on convex domains with boundary. \\
It should be mentioned that the results included in this paper are not the only recent contribution aiming at formulating a CBO method for constrained optimization. \\
The first appearing work in this direction is \cite{bae2022constrained}, where the authors analyze a fully discrete CBO scheme like \eqref{eq:B1-CBO1} with a correction implemented by an orthogonal projection onto $\ODD$, see also \eqref{numericeq0} below, for the case where the domain is convex. While in the paper the authors offer guarantees  of convergence to a consensus point, they do not go so far to provide convergence rates to global minimizers in terms of number $N$ of particles as in \eqref{B1-ownresult2}. Successive contributions \cite{borghi2021constrained,carrillo2021consensus} formulated the constrained optimization problem as a unconstrained optimization with penalty. While \cite{carrillo2021consensus} is focused on the formulation of the method and numerical results, the convergence proof in mean-field law in \cite{borghi2021constrained} is based on the analysis of large time behavior of a Fokker--Planck equation as in \eqref{eq:B1-fokker_planck} and makes use of \cite[Theorem 3.7]{fornasier2021consensus}. While in this paper the domain $\DD$ does not need to be convex, the result  relies on the fundamental assumption that the penalization is exact (\cite[Assumption A1]{borghi2021constrained}) and, again, it does not achieve convergence rates in terms of number $N$ of particles as in \eqref{B1-ownresult2}. The more recent paper \cite{carrillo2024constrained} contributes with a CBO model for equality constrained optimizations (hence the domain $\DD$ is a  manifold described globally as zero set of a smooth function) and provides a proof of convergence in mean-field law, following the blueprint of 
\cite[Theorem 3.7]{fornasier2021consensus}. Finally, in the series of papers \cite{FHPS1,FHPS2,fornasier2021anisotropic}, two of us introduced and analyzed formulations of CBO for optimizations over compact hypersurfaces with no boundary, providing proof of {\it local} convergence with quantitative estimates for problems on hyperspheres in \cite{FHPS2,fornasier2021anisotropic}. 
\\

 In the present paper we analyze the  following CBO  scheme, which was already introduced in \cite{bae2022constrained}: 
 \begin{tcolorbox}[width=\linewidth, sharp corners=all, colback=white!95!black,title=Algorithm: CBO on convex domains with boundary]
 
 Given a time horizon $T>0$ and a time discretization $t_0=0<\Delta t<\dots<K\Delta t=T$ of $[0,T]$, we define
\begin{align}\label{numericeq0}
	X_{(k+1)\Delta t}^i&=\Pi_{\ODD}\Big(X_{k\Delta t}^i -\Delta t\lambda\left(X_{k\Delta t}^i-X_\alpha(\rho_{k\Delta t}^{N})\right) \\ \nonumber 
    & +\sigma D(X_{k\Delta t}^i-X_\alpha(\rho_{k\Delta t}^{N}))N^i(0,\Delta t)\Big)\notag\\
	X_0^i&\sim \rho_0,\quad i=1,\dots,N\,,
\end{align}
where $\{N^i(0,\Delta t)\}_{i=1}^N$ are independent Gaussian random vectors with zero mean and covariance matrix $\Delta t \textbf{Id}_{d}$, $D(\cdot)$ denotes the diagonal matrix produced by its argument, and $\Pi_{\ODD}$ is the orthogonal projection onto the closed convex set $\ODD$.
\end{tcolorbox}
Let us stress at this point that the only relevant assumption on the initial distribution $\rho_0$ is that it has the $16$-th moment bounded (Lemma \ref{lem:moment}). Hence, except for this concentration assumption, we can choose quite freely how the initial particles are picked.\\
For the main results we need the following assumptions, here summarized informally (for details see Assumptions \ref{assum}-\ref{assum2}):
\begin{tcolorbox}[width=\linewidth, sharp corners=all, colback=white!95!black,title=Assumptions]
\begin{itemize}
\item[(A1)]
 $\DD$ is a sufficiently regular domain (Assumption \ref{assum}). 
\item[(A2)] The objective function $\TE$ is locally Lipschitz continuous with at most linear growth of the Lipschitz constant and it enjoys a quadratic growth (Assumption \ref{assum1}). 
\item[(A3)] The objective function  $\TE$ has a polynomial growth in the vicinity of the {\it unique} minimizer $x^*$ (Assumption \ref{assum2}).
\end{itemize}
\end{tcolorbox}
A few comments on the assumptions are in order: in assumption (A2) we require at most local Lipschitz continuity of $\TE$ and its quadratic growth    in order to ensure the well-posedness of certain continuous models \eqref{particle} and \eqref{Xbareq} and for proving their mean-field relationships. This latter assumption can be relaxed to other polynomial growth, see, e.g., \cite{gerber2023mean}.
Instead, assumption (A3) is crucial  to prove global convergence to $x^*$.\footnote{While here we restrict ourselves to the case where there is only one global minimizer $x^*$, other recent papers \cite{bungert2022polarized,fornasier2024polarized} formulated CBO for solving unconstrained problems with multiple global minimizers. Paper \cite{bungert2022polarized} does not come with a proof of well-posedness, global convergence for nonconvex objectives or convergence rates. The work \cite{fornasier2024polarized} provides well-posedness and a proof global convergence with convergence rates for nonconvex objectives of the continuous model only. Hence, the extension of CBO to multiple minimizers still faces some intricate difficulties.}  
Examples of functions fulfilling Assumption (A2) are the objectives of
lasso and ridge regression,
or empirical risk functions with, for instance, the least squares loss and weight decay.
Moreover, several standard benchmark functions in optimization~\cite{JYZ}, such as the nonconvex Rastrigin or Ackley function obey also Assumption (A3).

The main result of this paper is summarized as follows:
\begin{thm}\label{thmmain0} 
Assume (A1)-(A3) and fix $\varepsilon>0$, $2\lambda > \sigma^2$, and $\alpha >0$ large enough (depending on $\varepsilon$ and $\rho_0$). Let $\{(X_{k\Delta t}^i)_{k=1,\dots,K}\}_{i=1}^N$ be the iterations generated by the Euler--Maruyama scheme \eqref{numericeq0}, where  $K\Delta t =T_*$, and $T_*$ is large enough (depending on $\varepsilon>0$). Then the final iterations fulfill the following quantitative error estimate
\begin{align}\label{quantest}
	\EE\left[\left|\frac{1}{N}\sum_{i=1}^NX_{K\Delta t}^i-x^*\right|^2\right]\ \leq 3C_{\mathrm{NA}}\Delta t\log(1/\Delta t)+3C_{\mathrm{MFA}}\frac{1}{N}+3\varepsilon\,,
\end{align}
where $C_{\mathrm{MFA}}$ comes from Theorem \ref{thmmean}, and $C_{\mathrm{NA}}$ depends on $\lambda$, $\sigma$, $\alpha$, $d$, $T_*$, $N$ and $\TE$.
\end{thm}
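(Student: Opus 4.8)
The plan is to prove \eqref{quantest} by the standard three-scale decomposition of the CBO error, adapted to the reflected dynamics. On the probability space carrying the iterates \eqref{numericeq0} I would introduce two auxiliary families driven by the same Brownian motions and started at the same points $X^i_0$: the time-continuous system of $N$ interacting reflected particles $(\hat X^i_t)_{i=1,\dots,N}$ solving the Skorokhod-reflected SDE \eqref{particle} (the $\Delta t\to 0$ counterpart of \eqref{numericeq0}), and $N$ independent copies $(\bar X^i_t)_{i=1,\dots,N}$ of the mean-field reflected process \eqref{Xbareq}, with common law $\bar\rho_t$. Writing $m^N_{K\Delta t}=\frac1N\sum_{i=1}^N X^i_{K\Delta t}$, $\hat m^N_{T_*}=\frac1N\sum_{i=1}^N\hat X^i_{T_*}$, $\bar m^N_{T_*}=\frac1N\sum_{i=1}^N\bar X^i_{T_*}$ and using $|a+b+c|^2\le 3(|a|^2+|b|^2+|c|^2)$, the claim reduces to bounding $\EE|m^N_{K\Delta t}-\hat m^N_{T_*}|^2$, $\EE|\hat m^N_{T_*}-\bar m^N_{T_*}|^2$ and $\EE|\bar m^N_{T_*}-x^*|^2$ by $C_{\mathrm{NA}}\Delta t\log(1/\Delta t)$, $C_{\mathrm{MFA}}/N$ and $\varepsilon/N$ respectively.

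For the numerical term, Jensen's inequality and exchangeability give $\EE|m^N_{K\Delta t}-\hat m^N_{T_*}|^2\le\EE|X^1_{K\Delta t}-\hat X^1_{T_*}|^2$, so it suffices to establish a strong $L^2$ estimate for the projected Euler--Maruyama scheme of the reflected interacting particle system on $[0,T_*]$. The ingredients are the uniform $16$-th moment bound of Lemma \ref{lem:moment}, the local-Lipschitz/quadratic-growth structure of the coefficients (Assumption \ref{assum1}, which also controls the consensus point $X_\alpha(\rho_{k\Delta t}^N)$ along trajectories), and the fact that $\Pi_{\ODD}$ is $1$-Lipschitz; feeding these into the classical strong-convergence theory for projected schemes of reflected SDEs in a regular convex domain yields the rate $\Delta t\log(1/\Delta t)$ --- the logarithmic loss being the familiar price of the reflection --- and fixes $C_{\mathrm{NA}}$, depending on $\lambda,\sigma,\alpha,d,T_*,N,\TE$. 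For the mean-field term, the same Jensen/exchangeability step reduces it to $\EE|\hat X^1_{T_*}-\bar X^1_{T_*}|^2$, which is exactly the quantitative propagation-of-chaos estimate of Theorem \ref{thmmean} with constant $C_{\mathrm{MFA}}$; there the reflection enters only through the Lipschitz Skorokhod map on the regular domain $\DD$ (Assumption \ref{assum}), so the synchronous-coupling Gr\"onwall argument closes with a controllable boundary contribution.

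For the large-time term, independence of the $\bar X^i_{T_*}$ gives $\EE|\bar m^N_{T_*}-x^*|^2=\frac1N\operatorname{Var}(\bar X_{T_*})+|\EE\bar X_{T_*}-x^*|^2$, and the decisive input is the reflected analogue of \eqref{eq:contr}: under Assumptions \ref{assum1}--\ref{assum2}, for $2\lambda>\sigma^2$ and $\alpha$ large enough (depending on $\varepsilon$ and $\rho_0$), $W_2^2(\bar\rho_t,\delta_{x^*})$ decays along $[0,T_*]$ at the exponential rate $e^{-(1-\theta)(2\lambda-\sigma^2)t}$. The reason the confinement does not destroy this Lyapunov property --- and, arguably, the geometric heart of the paper --- is that $\DD$ is convex and $x^*\in\ODD$, so $\Pi_{\ODD}$ is nonexpansive with $\Pi_{\ODD}(x^*)=x^*$: reflecting a particle can only decrease $|\bar X_t-x^*|$, and the It\^o/Lyapunov computation behind \eqref{eq:contr} picks up only a sign-favorable boundary (local-time) term. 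Taking $T_*$ and $\alpha$ large enough then makes $W_2^2(\bar\rho_{T_*},\delta_{x^*})$, hence both summands above, as small as needed, which --- combined with the i.i.d.\ structure of the $\bar X^i_{T_*}$ --- produces the third term of \eqref{quantest}; summing the three contributions gives the claim.

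The assembly above is routine once these three ingredients are in place; the genuinely new difficulties, all caused by the reflection, live in the ingredients themselves, which must be re-derived in the presence of reflecting boundaries. I expect the large-time contraction to be the crux: in $\R^d$ the Fokker--Planck equation \eqref{eq:B1-fokker_planck} has no gradient-flow structure and the exponential decay \eqref{eq:contr} rests on a delicate quantitative Laplace principle, and one must now verify both that the boundary/local-time terms inherited from the reflection carry the right sign (exactly where convexity of $\DD$ and $x^*\in\ODD$ are indispensable) and that the quantitative Laplace estimate survives restriction to $\ODD$. Secondary but nontrivial are the well-posedness of the reflected particle and mean-field systems \eqref{particle}, \eqref{Xbareq} via a solvable Skorokhod problem on a sufficiently regular $\DD$ (Assumption \ref{assum}), and the $O(\Delta t\log(1/\Delta t))$ strong rate of the projected Euler--Maruyama scheme, both departing from the classical unconstrained theory of \cite{fornasier2021consensus}.
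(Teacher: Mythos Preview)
Your proposal is correct and follows essentially the same route as the paper: the same three-term split $|a+b+c|^2\le 3(|a|^2+|b|^2+|c|^2)$ into numerical, mean-field, and large-time errors, with the numerical term handled by Slomi\'nski's strong rate for the projected Euler scheme on convex domains, the mean-field term by Theorem \ref{thmmean}, and the large-time term by the exponential decay of $\mc V(t)=\EE|\OX_t-x^*|^2$ (Theorem \ref{thmconvergence}/\ref{thmconvergence'}). You also correctly pinpoint the key reason the reflection helps rather than hurts in the Lyapunov computation --- the sign-favorable local-time contribution \eqref{eq:favor} coming from convexity of $\DD$ and $x^*\in\ODD$ --- and that the quantitative Laplace estimate must be redone on $\ODD$ (Proposition \ref{propX} together with the mass lower bound of Proposition \ref{propositive1}/Corollary \ref{propositive}). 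One small remark: for the mean-field term in the paper the boundary is handled not via a Lipschitz Skorokhod map but directly by the two monotonicity inequalities $(X^i_t-\OX^i_t)\cdot dL^i_t\ge 0$ and $(X^i_t-\OX^i_t)\cdot d\bar L^i_t\le 0$ inside the It\^o computation, which is the same convexity mechanism you invoke elsewhere.
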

This  result goes beyond the above mentioned previous contributions \cite{bae2022constrained, borghi2021constrained, carrillo2024constrained} as it provides a {\it quantitative rate of convergence} with respect to relevant parameters of the algorithm, namely number of iterations $K$, time discretization step $\Delta t$, and number  of particles $N$. Moreover, differently from \cite{borghi2021constrained}, we do not need to   impose exact penalizations. Moreover, while our previous work \cite{FHPS1,FHPS2,fornasier2021anisotropic} for optimizations on  hyperspheres obtained  results of local convergence, Theorem \ref{thmmain0} ensures global convergence in probability for problems on convex domains with boundary. 
Indeed, a simple application of Markow inequality allows to obtain from \eqref{quantest} convergence in probability with a quantitative estimate of the type \eqref{B1-ownresult2}, hence we do not stress it here further.\\
The quantitative estimate \eqref{quantest} does explain well the observed behavior of the algorithm, as we will show in Section \ref{sec:numerical_experiments} with numerical experiments. Nevertheless, it is particularly meaningful for the many-particle regime $N \gg 1$, especially because the constant $C_{\mathrm{MFA}}$  shows exponential dependence on $\alpha$ and, depending on the objective function $\mathcal E$, this may be reflected in an exponential dependence on the dimension $d$, see, e.g., \cite{fornasier2021consensus}.\\
On the one hand, for the sake of minimizing running cost, it is certainly desirable to keep the number of particles quite small instead. On the other hand, reducing the number of particles implies a diminished capability of the algorithm of exploration the optimization domain and forming a consensus on the location of the global minimizer.  Hence numerical heuristics are needed to ensure that CBO in the few-particle regime mimics and behaves as the many-particle one.\\
As the second relevant contribution of this paper, in this work we also significantly improve the convergence and complexity of CBO in the few-particle regime by utilizing an adaptive region control mechanism and by choosing geometry-specific random noise. 
In particular, we show that these new heuristics are extremely powerful, allowing to solve the global optimization of the Rastrigin function in dimension $d=100$ with or without constraints, a problem that was so far completely out of reach of CBO methods, as it was becoming extremely challenging for CBO already for moderated dimensions ($d=20$), see, e.g., \cite{borghi2021constrained,BORGHI2023113859,pinnau2017consensus, totzeck2020consensus}. It remains an open and very challenging problem to provide theoretical guarantees for such heuristics for the few-particle regime.\\
Moreover, in order to test our novel approach beyond standard benchmark optimizations much tested in previous work, see, e.g., \cite{borghi2021constrained,BORGHI2023113859,pinnau2017consensus, totzeck2020consensus}, in this paper we apply for the first time CBO for solving a challenging problem in scientific computing. In particular, by integrating a hierarchical noise structure in a multigrid finite element method, we demonstrate how the CBO scheme \eqref{numericeq0} can be effectively applied to compute global minimizers for a constrained $p$-Allen--Cahn problem, both with and without obstacles. Given the existence of continua of (local) minimizers of   $p$-Allen--Cahn energies, such global variational problems are extremely challenging.  We consider this successful test a breakthrough that paves the way for further  applications of CBO in scientific computing, highlighting once more its versatility and ease of implementation. A complete theoretical study of these numerical results is left for future research.
\\

The paper is organized as follows. In Section \ref{sec:blueprint} we provide a concise explanation of the architecture of the proof of Theorem \ref{thmmain0}, which is based on the asymptotic approximation by auxiliary mean-field models for $\Delta t \to 0$ and $N\to \infty$. In Section \ref{sec:wpmfl} we address the well-posedness of such models and we quantify the approximation rate for $N\to \infty$, by establishing a quantitative mean-field limit. Section \ref{sec:globconv} is dedicated then to the proof of Theorem \ref{thmmain0}. In Section \ref{sec:num} we introduce new heuristics to improve the complexity for the few-particle regime and we illustrate their efficacy in solving constrained optimizations for well-known benchmark cases and for a constrained $p$-Allen--Cahn problem with obstacles.
}

\begin{table}[h!]
\centering
\begin{tabular}{p{2cm} p{10cm}}
\hline
\textbf{Notation} & \textbf{Definition} \\
\hline
$\TE$ & The cost function one wishes to minimize.\\
$x^*$ & The unique global minimizer of $\TE$.\\
$\ODD$ & The closure of a convex domain $\DD$. \\
$\partial \DD$ & The boundary of $\DD$. \\
$|\DD|$ & The diameter of $\DD$. \\
$n(x)$& An outward normal vector at $x\in\partial\DD$. \\
$\Pi_{\ODD}(\cdot)$ &  The orthogonal projection onto $\ODD$.\\
$|\cdot|$ & The standard Euclidean distance. \\
$\|\cdot\|_\infty$ & The $L^\infty$ norm.\\
$\mc{P}_p(\R^d)$  & The set of probability measures on $\R^d$ with a finite $p$th-moment.\\
$W_2(\mu,\nu)$ & The $2$-Wasserstein distance between probability measures $\mu, \nu\in \mc{P}_2(\R^d)$. Namely, $$W_2^2(\mu,\nu):=\inf_{\gamma\in\Gamma(\mu,\nu)}\iint_{\R^d\times\R^d}|x-y|^2\gamma(dx,dy)\,,$$
where $\Gamma(\mu,\nu)$ is the set of all couplings between $\mu$ and $\nu$.\\
$B_r(x)$ & An open ball with origin $x$ and radius $r$. \\
\hline
\end{tabular}
\caption{Notation Table. Further notations will be recalled in the text.}
\end{table}

\section{Architecture of the proof of the main result}\label{sec:blueprint}
{
While the proof of Theorem \ref{thmmain0} builds  on the blueprint of \cite{fornasier2021consensus}, 
it requires nevertheless major and nontrivial technical adaptations to deal with the boundary of $\DD$. In particular, thanks to the convexity of $\DD$ and the emergence of reflecting boundary conditions (see below), we obtain  inequalities, such as  \eqref{eq:favor}, which play at our favor in crucial estimates. Moreover, we need to devise new observations, such as Proposition \ref{propositive1} and Proposition \ref{propositive1'}, to deal with the case where the minimizer $x^*$ may lay precisely on the boundary, a situation, which is actually quite common in practical scenarios. See more details in Section \ref{sec:relevance} on the relevance of the analytic advances of this paper and their insights.\\
Now, to make sense of Theorem \ref{thmmain0} and how it is obtained, we need to introduce further notations. For $x\in\partial \DD$, we denote by $\mc{H}_x$ the set of all supporting hyperplanes of $\DD$ at $x$. By an outward normal vector $n(x)$ at $x\in\partial\DD$ one means any outward unit vector perpendicular to some $H\in \mc{H}_x$. Moreover, we denote by 
$\mc {N}_x$ the set of all outward unit normal vectors at $x\in\partial \DD$. Indeed, there is a possibility that $\#(\mc {N}_x)=\infty$ if the boundary $\partial \DD$ lacks smoothness in the vicinity of $x$. However, in our current study, we do not impose the condition of boundary smoothness. For later use let us denote $[N]:=\{1,\cdots,N\}$.

\subsection{Step 1: convergence of the numerical scheme for $\Delta t \to 0$}

The proof of Theorem \ref{thmmain0} is based on observing first that the iterations \eqref{numericeq0} converge in suitable sense for  $\Delta t \to 0$ to the solution of the following Skorokhod stochastic differential equations (SDEs)
\begin{subequations}\label{particle}
	\begin{numcases}{}
		dX_t^i=-\lambda(X_t^i-X_\alpha(\rho_t^{N}))dt+\sigma D(X_t^i-X_\alpha(\rho_t^{N}))dB_t^{i}-dL_t^i\,,\label{eqX}\\
		L_t^i=\int_0^t n(X_s^i)d|L^i|_s,\quad |L^i|_t=\int_0^t \mathbf{I}_{\partial\DD}(X_s^i)d|L^i|_s\,,\quad   i\in[N] \label{eqL}
	\end{numcases}
\end{subequations}
where $n(X_s^i)\in \mc N_{X_s^i}$ if $X_s^i\in\partial \DD$,
$\{B^{i}_{\{t\geq0\}}\}_{i=1}^N$ are $N$ independent Brownian motions and $\{L^{i}_{\{t\geq0\}}\}_{i=1}^N$ are continuous reflecting processes associated to $\{X^{i}_{\{t\geq0\}}\}_{i=1}^N$  with bounded total variation, which prevent particles leaving the domain. Moreover, $|L|_t^i$ denotes the total variation of $L_s^i$ on $[0,t]$, namely
\begin{equation}
	|L|_t^i=\sup\sum\limits_k|L_{t_k}^i-L_{t_{k-1}}^i|,
\end{equation}
where the supremum is taken over all partitions such that $0=t_0<t_1<\cdots<t_n=t$. Here, again the consensus point is defined as
\begin{equation}\label{XaN}
	X_\alpha(\rho_t^{N})=\frac{\int_{\ODD}x\omega_{\alpha}^{\mc{E}}(x)\rho^{N}(t,dx)}{\int_{\ODD}\omega_{\alpha}^{\mc{E}}(x)\rho^{N}(t,dx)}, \quad \rho^{N}(t,dx)=\frac{1}{N}\sum_{i=1}^N\delta_{X_t^i}dx\,.
\end{equation}

 \subsection{Step 2: mean-field limit for $N\to \infty$}

As $N\to\infty$, we further prove that the CBO dynamics \eqref{particle} will well approximate the solution of the following  mean-field  kinetic Mckean--Vlasov type equation
\begin{subequations}\label{Xbareq}
	\begin{numcases}{}
d\OX_t=-\lambda(\OX_t-X_\alpha(\rho_t))dt+\sigma D(\OX_t-X_\alpha(\rho_t))dB_t-d\OL_t, \label{eqXbar}\\
\OL_t=\int_0^t n(\OX_s)d|\OL|_s,\quad |\OL|_t=\int_0^t \mathbf{I}_{\partial\DD}(\OX_s)d|\OL|_s\,, \label{eqLbar}
	\end{numcases}
\end{subequations}
where 
\begin{equation}\label{Xa}
X_\alpha(\rho_t)=\frac{\int_{\ODD}x\omega_{\alpha}^{\mc{E}}(x)\rho(t,dx)}{\int_{\ODD}\omega_{\alpha}^{\mc{E}}(x)\rho(t,dx)}\,,
\end{equation}
with $\rho(t,x)$ being  required to be  the law of $\OX_t$ , which makes the set of equations \eqref{Xbareq} nonlinear and self-consistent.  

Equations \eqref{particle} and \eqref{Xbareq} are known as Skorokhod SDEs, analogous to the one-dimensional case initially studied by Skorohkhod in \cite{skorokhod1962stochastic}. The extension of Skorokhod SDEs to domains beyond just a half-line or half-space was first addressed by Tanaka in \cite{tanaka1979stochastic}, where the domain $\DD$ is assumed to be convex. This convexity condition was subsequently relaxed by Lions and Sznitman in \cite{lions1984stochastic} together with an admissibility condition that essentially requires $\DD$ to be approximated in some way by smooth domains. This admissibility condition was later eliminated by Saisho in \cite{saisho1987stochastic}.  Finally, for a comprehensive review of stochastic differential equations with reflection, we refer the reader to \cite{pilipenko2014introduction}.

A direct application of It\^{o}'s formula, the law $\rho_t:=\rho(t,\cdot)$ of $\OX$ at time $t$ is a weak solution to the following nonlinear Vlasov--Fokker--Planck equation with non-flux boundary condition
\begin{subequations}\label{MFPDE}
	\begin{numcases}{}
		\partial_{t} \rho_t=\lambda \nabla\cdot((x-X_\alpha(\rho_t))\rho_t)+\frac{\sigma^2}{2}\sum_{k=1}^{d}\partial_{x_k,x_k}^2((x-X_\alpha(\rho_t))_{k}^2\rho_t),\quad x\in\DD,~t>0\\
		\la (x-X_\alpha(\rho_t))\rho_t,n(x)\ra +\sum_{k=1}^{d}\partial_{x_k}((x-X_\alpha(\rho_t))_{k}^2\rho_t) n_k(x)=0,\quad x\in\partial \DD\,.
	\end{numcases}
\end{subequations}
with the initial data $\rho_0(x)=\mbox{Law}(\OX_0)$. Indeed for any test function $\phi\in \mc{C}^2(\bar\DD)$ with $\partial_{x_k}\phi(x)n_k(x)=0$ on $\partial \DD$ for all $k=1,\dots,d$, using It\^{o}'s lemma deduces
\begin{align}\label{ito}
	d\phi(\OX_t)=&\nabla\phi(\OX_t)\cdot[-\lambda(\OX_t-X_\alpha(\rho_t))dt+\sigma D(X_t-X_\alpha(\rho_t))dB_t]\nn\\
	&-\nabla\phi(\OX_t)\cdot n(\OX_t)\mathbf{I}_{\partial\DD}(\OX_t)d|\OL|_t +\frac{\sigma^2}{2}\partial_{x_kx_k}^2\phi(\OX_t)(X_t^i-X_\alpha(\rho_t))_k^2dt\,.
\end{align}
Then taking expectation on both sides implies that $\rho_t(x)=\mbox{Law}(\OX_t)$ is a weak solution to \eqref{MFPDE}.
 \subsection{Step 3: large time analysis of the mean-field model}
The combination of the above mentioned quantitative approximations for $\Delta t \to 0$, $N \to \infty$, and the analysis of the large time behavior of the solution $\rho_t$ of \eqref{MFPDE} or its underlying process $\OX_t$ of \eqref{Xbareq} yields the estimate \eqref{quantest} of Theorem \ref{thmmain0}. The details of the proof follow in Section \ref{sec:globconv}.\\

\tikzset{
  box/.style = {rectangle, draw, rounded corners, minimum width=3.4cm, minimum height=8mm, align=center, font=\sffamily},
  arrow/.style = {-{Stealth[length=6pt,width=6pt]}, thick}
}

\begin{figure}
\centering 
\begin{tikzpicture}[node distance=6mm and 6mm]
  \node[box, minimum height=1.25cm, minimum width=2.5cm] (in) {Theorems \\ \ref{thmparticle} $\&$ \ref{wpxbar}};
  \node[box, minimum height=1.25cm, minimum width=2.5cm, right=9mm of in] (proc) {Theorem \\ \ref{thmmean}};
  \node[box, minimum height=1.25cm, minimum width=2.5cm, right=9mm of proc] (model) {Theorems \\ \ref{thmconvergence} $\&$ \ref{thmconvergence'}};
  \node[box, minimum height=1.25cm, minimum width=2.5cm, right=9mm of model] (out) {Theorem \\  \ref{thmmain}};

  \node[box, minimum height=1.25cm, minimum width=2.5cm, above=2mm of in] (wp) {Well-posedness}; 
  \node[box, minimum height=1.25cm, minimum width=2.5cm, above=2mm of proc] (pa) {Particle \\ approximation}; 
  \node[box, minimum height=1.25cm, minimum width=2.5cm, above=2mm of model] (lta) {Large time \\ analysis}; 
  \node[box, minimum height=1.25cm, minimum width=2.5cm, above=2mm of out] (gc) {Global \\ convergence}; 

  \draw[arrow] (wp) -- (pa);
  \draw[arrow] (pa) -- (lta);
  \draw[arrow] (lta) -- (gc);
\end{tikzpicture}
\caption{Scheme of results: in Section \ref{sec:wpmfl} we address the well-posedness of \eqref{particle} and \eqref{Xbareq} and their relationship via mean-field limit $N\to \infty$. In Section \ref{sec:globconv} we analyze the large time behavior and we derive the global convergence of the numerical scheme \eqref{numericeq0}.} 
\end{figure}

During the revision of this paper, we got aware of the independently developed work \cite{M3AS25}, which addressed the same class of mean-field models as  \eqref{Xbareq}.  While well-posedness and particle approximation of the model were developed also in this latter paper, the proof of  convergence to minimizers for CBO is based on a variance analysis, which requires bounded objectives $f$, bounded domain $\mathcal D$, and have a suboptimal scaling of the parameters $\lambda, \sigma$ depending on $\alpha$ (compare the condition  $2\lambda > \sigma^2 (1 + e^{2 \alpha (f_{max}-f_{min})})$ in \cite[Theorem 5.2]{M3AS25} with the optimal one $2\lambda > \sigma^2$ as in our Theorem \ref{thmmain0}). Moreover, in contrast to Theorem \ref{thmmain0}, in \cite{M3AS25}  no proof of convergence of the numerical scheme \eqref{numericeq0} was achieved.  

\section{Main technical challenges}\label{sec:relevance}

This paper provides the first rigorous proof of {\it global} convergence for CBO in convex domains with boundaries by developing a new mean-field formulation and analytic framework that extends Wasserstein contractivity and the quantitative Laplace principle to constrained settings. 

The main insights of our novel analysis is that the presence of a boundary does not destroy the exponential-in-time contractivity of CBO towards global minimizers and that the algorithm \eqref{numericeq0} is precisely the right one to discretize the dynamics. 
 
In particular, we aim to address a significant gap in the literature: although the numerical scheme \eqref{numericeq0} has been known since its initial formulation in \cite{bae2022constrained}, no proof of global convergence has been provided to date. This gap arises for two main reasons. First, a proper analytical approach was developed only very recently in \cite{fornasier2021consensus} for unconstrained optimization. Second, the approach in \cite{fornasier2021consensus} relies on contractivity estimates involving the Wasserstein distance, as in \eqref{eq:contr}. While this argument works well for unconstrained optimization, it has remained elusive how to adapt it to domains with boundaries, and how the presence of boundaries would affect estimates involving Wasserstein-like distances. To emphasize such difficulty, let us reiterate that in all existing CBO literature on constrained optimization, authors have consistently sought to avoid dealing directly with boundaries \cite{bae2022constrained,borghi2021constrained,carrillo2021consensus,carrillo2024constrained, FHPS1,FHPS2,fornasier2021anisotropic}. (The only exception is the recent work \cite{M3AS25} already discussed above at the end of the previous section.)

One of the main contributions of this paper is the identification of a proper formulation of the mean-field approximation \eqref{Xbareq}, which enables inequalities such as \eqref{eq:favor} that are crucial in estimates involving Wasserstein-like distances, such as \eqref{eq:def_V}. Without finding the right formulation and carefully developing the entire argument, it is by no means obvious that adapting the system to reflective boundary conditions would permit contractivity of the functional \eqref{eq:def_V}, making this a central and nontrivial achievement of our work. Indeed, for minimizers precisely at the boundary, one could have been concerned that reflecting boundary conditions could contribute to counteract the convergence, an effect that the theoretical results of the present paper eventually excludes, at least for convex domains (for nonconvex domains, this issue remains open).
\\
Another central technical tool used in the paper \cite{fornasier2021consensus} is the so-called {\it quantitative Laplace principle}, which was again formulated to work for problems defined on the entire Euclidean space.  Proposition \ref{propX},  Proposition \ref{propositive1}, Proposition \ref{propositive1'}, and Corollary \ref{propositive} finally allow for the adaptation to bounded domains. Without such a proper formulation, it would have not been possible to deal with the analysis for the global minimizers laying precisely at the boundary of the domain, a situation very common in optimization.
\\
As a further contribution, the proofs of convergence as in Theorem \ref{thmconvergence} and Theorem \ref{thmconvergence'} are simpler  compared to the formulation in \cite{fornasier2021consensus}, offering a more straightforward approach to further developments.
\\
Although these theoretical results are already significant advances for the reliable use of CBO for constrained optimization, we felt interesting to explore how flexible and effective CBO can be for constrained optimization, by pushing a bit further the boundaries of numerical experiments beyond classical benchmarks \cite{JYZ} as reported so many times in other previous studies \cite{borghi2021constrained,BORGHI2023113859,pinnau2017consensus,totzeck2020consensus,M3AS25}.
Therefore, we introduce novel heuristics tailored to few-particle regimes, including adaptive region control and geometry-dependent noise. The numerical experiments we report demonstrate that, even with a small number of particles, with such heuristics CBO can successfully solve problems that previous studies could not address. We also explore new challenging applications, such as the reliable computation of ground states of nonconvex energies, and show for the first time in the CBO literature that the $p$-Allen–Cahn variational problem with obstacles can be solved efficiently and robustly, highlighting the adaptability of CBO to various geometries and problems.

\section{Well-posedness and mean-field limit}\label{sec:wpmfl}

Let us now study in details the well-posedness of the models \eqref{XaN}-\eqref{Xbareq} and their mean-field relationship. First, we introduce the Condition (B) from \cite{tanaka1979stochastic} for a convex domain $\DD$.
\begin{assum}\label{assum}
    There exists $\varepsilon>0$ and $\delta>0$ such that for any $x\in\partial D$ we can find an open ball  $B_\varepsilon(x_0)=\{y\in \R^d:~|y-x_0|<\varepsilon\}$ satisfying $B_\varepsilon(x_0)\subset \DD$ and $|x-x_0|\leq \delta$.
\end{assum}
One can see that the above assumption always holds if $\DD$ is bounded or if $d=2$.

\begin{assum}\label{assum1}
	Throughout this section we are interested in the objective function $\TE\in\mc{C}(\ODD)$, for which
	\begin{enumerate}
		\item it holds that $\underline \TE\leq\TE(x)$ for all $x\in\ODD$ and satisfies
  \begin{equation}\label{eqlip}
      \forall x,y\in \ODD,\quad |\TE(x)-\TE(y)|\leq L_{\TE}(1+|x|+|y|)|x-y|\,;
  \end{equation}
		\item {
 there exist some constants $c_u,c_\ell,C_u,C_\ell>0$ such that
		\begin{align}
		   &\forall x\in \ODD,\quad c_\ell|x|^2-C_\ell\leq \TE(x)-\underline\TE \mbox{ and }\notag \\
           &\forall x\in \ODD,\quad \TE(x)-\underline\TE\leq c_u|x|^2+C_u\,.
		\end{align}}
	\end{enumerate}
\end{assum}

\subsection{Well-posedness}
To keep the notation concise in what follows, let us denote the state vector of the entire particle system~\eqref{particle} by $\mathbf{X}\in\mc{C}([0,\infty),\R^{Nd})$ with $\mathbf{X}(t) = \mathbf{X}_t = \left((X_t^1)^T, \dots, (X_t^N)^T\right)^T$ for every $t\geq0$.
Equation~\eqref{particle} can then be reformulated as
\begin{align} \label{particlecompact}
	d\mathbf{X}_t
	= -\diag{(\lambda)}\mathbf{F}(\mathbf{X}_t)\,dt + \diag{(\sigma)}\mathbf{M}(\mathbf{X}_t)\,d\mathbf{B}_t-d\mathbf{L}_t
\end{align}
with $(\mathbf{B}_t)_{t\geq0}$ being a standard Brownian motion in $\R^{Nd}$ and  definitions
\begin{align*}
	&\mathbf{L}_t:= \left((L_t^1)^T, \dots, (L_t^N)^T\right)^T\\
	&\mathbf{F}(\mathbf{X}_t)
	:= \,\left({F}^{1}(\mathbf{X}_t)^T, \dots, {F}^{N}(\mathbf{X}_t)^T\right)^T \quad \text{ with }\quad {F}^{i}(\mathbf{X}_t) = \left(X_t^i - X_\alpha(\rho_t^{N})\right)\in \R^d \\
	&\mathbf{M}(\mathbf{X}_t)
	:= \,\diag\left({M}^{1}(\mathbf{X}_t), \dots, {M}^{N}(\mathbf{X}_t)\right) \quad\text{ with }\quad {M}^{i}(\mathbf{X}_t) = D\!\left(X_t^i - X_\alpha(\rho_t^{N})\right)\in\R^d\,.
\end{align*}
The $\diag$ operator in the definition of $\mathbf{M}$ maps the input matrices onto a block-diagonal matrix with them as its diagonal, and $\diag{(\lambda)}$ and $\diag{(\sigma)}$ are $Nd \times Nd$-dimensional diagonal matrices, whose entries are $\lambda$ and $\sigma$.

\begin{lem}\label{lemlocal}
	Assume the cost function $\TE$ satisfies Assumption \ref{assum1}-$(1)$. Let $N\in\NN$, $\alpha>0$ be arbitrary. Then for any $\mb{X},\hat{\mb{X}}\in\ODD^{N}$ satisfying $|\mb{X}|,|\hat{\mb{X}}|\leq R$, it holds that
	\begin{align}\label{lemeq1}
		|F^{i}(\mb{X})|\leq 2|X^i|+|\mb{X}|\leq 2|\mb{X}|
	\end{align}
and
	\begin{align}\label{lemeq2}
			|F^{i}(\mb{X})-F^{i}(\hat{\mb{X}})|\leq 2\left(2+RL_\TE(1+2R)\alpha \exp\left(\alpha(\overline{\TE}_R-\underline{\TE}_R)\right)\right)|\mb{X}-\hat{\mb{X}}|_\infty\,,
	\end{align}
	all $i\in[N]$, where $L_\TE$ comes from \eqref{eqlip}, $\overline{\TE}_R:=\max_{|x|\leq R}\TE(x)$, $\underline{\TE}_R:=\min_{|x|\leq R}\TE(x)$, $|\mb X|_\infty:=\max_{i=1,\dots,N}|X^i|$ for any $\mb X\in \R^{Nd}$, and  $|\cdot|$ denotes the standard Euclidean norm.
\end{lem}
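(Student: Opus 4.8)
The plan is to establish the two estimates directly from the definition of $F^i(\mathbf{X}) = X^i - X_\alpha(\rho^N_{\mathbf X})$, where $X_\alpha(\rho^N_{\mathbf X}) = \sum_{j=1}^N X^j \omega_\alpha^\TE(X^j) / \sum_{j=1}^N \omega_\alpha^\TE(X^j)$ is a convex combination (with positive weights) of the $X^j$. First I would observe that the consensus point always lies in the convex hull of $\{X^1,\dots,X^N\}$, so $|X_\alpha(\rho^N_{\mathbf X})| \le \max_j |X^j| \le |\mathbf X|$ (interpreting $|\mathbf X|$ as the Euclidean norm on $\R^{Nd}$, which dominates each block). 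Then \eqref{lemeq1} follows from the triangle inequality: $|F^i(\mathbf X)| \le |X^i| + |X_\alpha(\rho^N_{\mathbf X})| \le |X^i| + |\mathbf X| \le 2|\mathbf X|$ — actually one gets the slightly sharper middle bound $2|X^i| + |\mathbf X|$ by a cruder split, which is what the statement records; either way the final $\le 2|\mathbf X|$ holds.

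For the Lipschitz-type bound \eqref{lemeq2} the main work is to estimate $|X_\alpha(\rho^N_{\mathbf X}) - X_\alpha(\rho^N_{\hat{\mathbf X}})|$, since $|F^i(\mathbf X) - F^i(\hat{\mathbf X})| \le |X^i - \hat X^i| + |X_\alpha(\rho^N_{\mathbf X}) - X_\alpha(\rho^N_{\hat{\mathbf X}})|$ and $|X^i - \hat X^i| \le |\mathbf X - \hat{\mathbf X}|_\infty$. I would write $X_\alpha$ as a ratio $\text{(numerator)}/\text{(denominator)}$ with numerator $\frac1N\sum_j X^j \omega_\alpha^\TE(X^j)$ and denominator $\frac1N\sum_j \omega_\alpha^\TE(X^j)$, and use the standard quotient-perturbation trick: subtract and add a mixed term so that the difference is controlled by (i) the difference of numerators times $1/\text{denominator}$ and (ii) the difference of denominators times $\text{numerator}/\text{denominator}^2$. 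On the ball $|\mathbf X| \le R$ the weights satisfy $e^{-\alpha \overline\TE_R} \le \omega_\alpha^\TE(X^j) = e^{-\alpha\TE(X^j)} \le e^{-\alpha \underline\TE_R}$, which bounds the denominator below by $e^{-\alpha\overline\TE_R}$ and hence gives the factor $e^{\alpha(\overline\TE_R - \underline\TE_R)}$ in the final constant. The remaining ingredient is Lipschitz continuity of the map $x \mapsto \omega_\alpha^\TE(x)$ on the ball: since $t \mapsto e^{-\alpha t}$ has Lipschitz constant $\alpha e^{-\alpha \underline\TE_R}$ on the range $[\underline\TE_R, \overline\TE_R]$ and $\TE$ is locally Lipschitz with $|\TE(x) - \TE(y)| \le L_\TE(1 + |x| + |y|)|x-y| \le L_\TE(1 + 2R)|x-y|$ by Assumption \ref{assum1}-(1), one gets $|\omega_\alpha^\TE(X^j) - \omega_\alpha^\TE(\hat X^j)| \le \alpha L_\TE(1+2R) e^{-\alpha\underline\TE_R} |\mathbf X - \hat{\mathbf X}|_\infty$. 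The $X^j$ prefactor appearing when bounding the numerator difference contributes the extra $R$ in the constant $RL_\TE(1+2R)\alpha\exp(\alpha(\overline\TE_R - \underline\TE_R))$.

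Assembling these pieces, the difference of consensus points is bounded by $2\big(1 + RL_\TE(1+2R)\alpha e^{\alpha(\overline\TE_R - \underline\TE_R)}\big)|\mathbf X - \hat{\mathbf X}|_\infty$ after carefully tracking the two contributions from the quotient estimate (each of numerator and denominator type produces a comparable term, which is where the overall factor $2$ and the additive $2$ inside the parenthesis originate), and adding the $|X^i - \hat X^i| \le |\mathbf X - \hat{\mathbf X}|_\infty$ term yields \eqref{lemeq2}. The only mildly delicate point is the bookkeeping in the quotient-perturbation step — making sure the denominators are bounded uniformly away from zero on the ball and that the numerator, being itself bounded by $R$ times the denominator, does not introduce an uncontrolled factor — but this is routine once the weight bounds on $|\mathbf X| \le R$ are in place. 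No compactness or boundary considerations enter here; convexity of $\ODD$ is used only through the (trivial) fact that the consensus point stays in $\ODD$, which is not even needed for the stated inequalities.
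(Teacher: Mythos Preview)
Your proposal is correct and follows essentially the same strategy as the paper. The only cosmetic difference is that the paper works with the centered representation $F^i(\mathbf X)=\sum_j(X^i-X^j)\omega_\alpha^\TE(X^j)\big/\sum_j\omega_\alpha^\TE(X^j)$ and splits the difference into three pieces $I_1,I_2,I_3$ (change the linear factor, change the numerator weights, change the denominator), whereas you apply the same add-and-subtract quotient trick directly to $X_\alpha=\sum_j X^j\omega_\alpha^\TE(X^j)\big/\sum_j\omega_\alpha^\TE(X^j)$; the mean-value bound on $|\omega_\alpha^\TE(X^j)-\omega_\alpha^\TE(\hat X^j)|$ and the lower bound $e^{-\alpha\overline\TE_R}$ on the denominator are used identically in both. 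One small slip: the intermediate bound $2|X^i|+|\mathbf X|$ in \eqref{lemeq1} is \emph{cruder}, not sharper, than the $|X^i|+|\mathbf X|$ you first obtain---either way the final inequality $|F^i(\mathbf X)|\le 2|\mathbf X|$ holds.
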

\begin{proof}
Recall that, 
\begin{equation}
    F^{i}(\mb{X})=X^i-\frac{\sum_{j=1}^N(X^i-X^j)\omega_{\alpha}^{\mc{E}}(X^j)}{\sum_{j=1}^N\omega_{\alpha}^{\mc{E}}(X^j)}\,,
\end{equation}
then the estimate \eqref{lemeq1} follows directly. Now we prove \eqref{lemeq2}.
 For any $i\in[N]$, we have
	\begin{align}
		&F^{i}(\mb{X})-F^{i}(\hat{\mb{X}})=X^i-\hat X^i+\frac{\sum_{j=1}^N(X^i-X^j)\omega_{\alpha}^{\mc{E}}(X^j)}{\sum_{j=1}^N\omega_{\alpha}^{\mc{E}}(X^j)}-\frac{\sum_{j=1}^N(\hat X^i-\hat X^j)\omega_{\alpha}^{\mc{E}}(\hat X^j)}{\sum_{j=1}^N\omega_{\alpha}^{\mc{E}}(\hat X^j)}\nn\\
		=&X^i-\hat X^i+\frac{\sum_{j=1}^N(X^i-\hat X^i+\hat X^j-X^j)\omega_{\alpha}^{\mc{E}}(X^j)}{\sum_{j=1}^N\omega_{\alpha}^{\mc{E}}(X^j)}
		+\frac{\sum_{j=1}^N(\hat X^i-\hat X^j)\left(\omega_{\alpha}^{\mc{E}}(X^j)-\omega_{\alpha}^{\mc{E}}(\hat X^j)\right)}{\sum_{j=1}^N\omega_{\alpha}^{\mc{E}}(X^j)}\nn\\
		&+\sum_{j=1}^N(\hat X^i-\hat X^j)\omega_\alpha^\TE(\hat X^j)\frac{\sum_{j=1}^N\left(\omega_{\alpha}^{\mc{E}}(X^j)-\omega_{\alpha}^{\mc{E}}(\hat X^j)\right)}{\sum_{j=1}^N\omega_{\alpha}^{\mc{E}}(X^j)\sum_{j=1}^N\omega_{\alpha}^{\mc{E}}(\hat X^j)}\nn\\
		=:&X^i-\hat X^i+I_1+I_2+I_3\,.
	\end{align}
	It is not difficult to see that
	\begin{align}
		|I_1|\leq |X^i-\hat X^i|+|\mb{X}-\hat{\mb{X}}|_\infty\leq 2|\mb{X}-\hat{\mb{X}}|_\infty\,.
	\end{align}
 Using assumption \eqref{eqlip} one can further compute that
	\begin{align}
		|I_2|&\leq \frac{\sum_{j=1}^N|\hat X^i-\hat X^j|\alpha \exp\left(-\alpha((1-c)\TE(X^j)+c\TE(\hat X^j))\right)L_\TE(1+|X^j|+|\hat X^j|)|X^j-\hat X^j|}{N\exp(-\alpha\overline{\TE}_R)}\nn\\
  &\leq 2RL_\TE(1+2R)\alpha \exp\left(\alpha(\overline{\TE}_R-\underline{\TE}_R)\right)\frac{\sum_{j=1}^N|X^j-\hat X^j|}{N}\nn\\
  &\leq 2RL_\TE(1+2R)\alpha \exp\left(\alpha(\overline{\TE}_R-\underline{\TE}_R)\right)|\mb{X}-\hat{\mb{X}}|_\infty\,,
	\end{align}
 where $c\in[0,1]$ comes from the mean-value theorem. Similarly, we also have
	\begin{align}
		|I_3|\leq 2RL_\TE(1+2R)\alpha \exp\left(\alpha(\overline{\TE}_R-\underline{\TE}_R)\right)|\mb{X}-\hat{\mb{X}}|_\infty\,.
	\end{align}
	Putting all the above terms together yields the required estimate.
\end{proof}

Now for any fixed number $N$ of particles, one obtains the well-posedness of the particle system \eqref{particlecompact}.
\begin{thm}\label{thmparticle} Let $\DD$ satisfy Assumption \ref{assum} and $\TE$ satisfy Assumption \ref{assum1}-$(1)$. Then
	for each $N\in\NN$ and any initial data $\{X_0^i\}_{i=1}^N\in \ODD^N$ satisfying $\EE[|\textbf{X}_0|^2]<\infty $, there exists a pathwise unique solution $(\mathbf{X}_t,\mathbf{L}_t)_{t\in[0,T]}$ to the particle system \eqref{particle} or \eqref{particlecompact} for any $T>0$, and it satisfies $\EE\left[\sup_{0\leq t\leq T}|\mb X_t|^2\right]<\infty$.
\end{thm}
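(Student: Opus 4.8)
The plan is to combine the classical well-posedness theory for Skorokhod SDEs on convex domains with a cut-off/localization argument that upgrades the local Lipschitz estimates of Lemma~\ref{lemlocal} to the (globally non-Lipschitz) coefficients of \eqref{particlecompact}. I would first fix $R>0$, pick a smooth cut-off $\chi_R : \R^{Nd}\to[0,1]$ that equals $1$ on $\{|\mathbf X|\le R\}$ and vanishes on $\{|\mathbf X|\ge 2R\}$, and replace $\mathbf F,\mathbf M$ by $\mathbf F_R:=\chi_R\,\mathbf F$ and $\mathbf M_R:=\chi_R\,\mathbf M$ (extended by $0$ outside $\ODD^N$). By Lemma~\ref{lemlocal} applied with radius $2R$, together with the explicit diagonal form of $\mathbf M$, the truncated coefficients $\mathbf F_R$ and $\mathbf M_R$ are globally Lipschitz and bounded on $\R^{Nd}$; the exponential blow-up of the Lipschitz constant in $R$ is irrelevant at this stage.

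Next I would solve the truncated reflected system. The product $\ODD^N$ is closed and convex and inherits Condition~(B) (Assumption~\ref{assum}) from $\DD$ (with constants depending on $N$), and the componentwise reflection prescribed in \eqref{eqL} is precisely the Skorokhod reflection in $\ODD^N$, whose inward normal cone at a point $((x^1)^T,\dots,(x^N)^T)^T$ is the product of the normal cones of $\ODD$ at the $x^i$. Hence, by Tanaka's theorem for Skorokhod SDEs on convex domains \cite{tanaka1979stochastic} (see also \cite{lions1984stochastic,saisho1987stochastic,pilipenko2014introduction}), the truncated equation driven by $\mathbf B$ with initial datum $\mathbf X_0$ admits a pathwise unique strong solution $(\mathbf X^R_t,\mathbf L^R_t)_{t\ge0}$ with continuous paths, satisfying $\EE[\sup_{t\le T}|\mathbf X^R_t|^2]<\infty$ for every $T>0$ by the standard a priori estimate.

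The core step is patching and ruling out explosion. Setting $\tau_R:=\inf\{t\ge0:\ |\mathbf X^R_t|\ge R\}$, pathwise uniqueness and the agreement $\mathbf F_R=\mathbf F$, $\mathbf M_R=\mathbf M$ on $\{|\mathbf X|\le R\}$ imply $\mathbf X^{R'}=\mathbf X^{R}$ on $[0,\tau_R]$ for $R'\ge R$; thus $(\tau_R)_R$ is nondecreasing and $(\mathbf X_t,\mathbf L_t):=(\mathbf X^R_t,\mathbf L^R_t)$ on $\{t<\tau_R\}$ is a well-defined solution of \eqref{particle} on $[0,\tau_\infty)$, where $\tau_\infty:=\sup_R\tau_R$. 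To prove $\tau_\infty=\infty$ a.s.\ I would derive a moment bound uniform in $R$: after translating so that $0\in\DD$ (which only changes constants), convexity gives $\langle x,n(x)\rangle\ge 0$ for every $x\in\partial\DD$ and $n(x)\in\mc N_x$, so the reflecting contribution is sign-definite in our favor; applying It\^o's formula to $|\mathbf X^R_t|^2$, using \eqref{lemeq1} to bound $|F^i(\mathbf X)|\le 2|\mathbf X|$ and $\sum_{k=1}^{d}(M^i(\mathbf X))_{kk}^2=|F^i(\mathbf X)|^2\le 4|\mathbf X|^2$ for $t\le\tau_R$, discarding the local-time term by convexity, and treating the stochastic integral via the Burkholder--Davis--Gundy and Young inequalities, one arrives at
\begin{equation*}
\EE\Big[\sup_{s\le t\wedge\tau_R}|\mathbf X^R_s|^2\Big]\le C_1\big(1+\EE[|\mathbf X_0|^2]\big)+C_2\int_0^t\EE\Big[\sup_{u\le s\wedge\tau_R}|\mathbf X^R_u|^2\Big]\ds,
\end{equation*}
with $C_1,C_2$ depending only on $\lambda,\sigma,N,d$ and the translation vector. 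Gr\"onwall's inequality then yields $\EE[\sup_{s\le T\wedge\tau_R}|\mathbf X^R_s|^2]\le C(T)$ uniformly in $R$; since $|\mathbf X^R_{\tau_R}|=R$ on $\{\tau_R\le T\}$ by path continuity, Chebyshev's inequality gives $\PP(\tau_R\le T)\le C(T)/R^2\to0$, hence $\tau_\infty=\infty$ almost surely. Finally, Fatou's lemma turns the uniform bound into $\EE[\sup_{t\le T}|\mathbf X_t|^2]<\infty$, and pathwise uniqueness for \eqref{particle} follows because any two solutions must coincide on each $[0,\tau_R]$ by uniqueness for the truncated problem.

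I expect the main obstacle to be the a priori moment estimate, and specifically the control of the reflecting (local-time) term: it is exactly here that convexity of $\DD$ is indispensable, since it supplies the favorable sign $\langle x-a,n(x)\rangle\ge 0$ for $a\in\ODD$, $x\in\partial\DD$, which lets one simply drop the term; without convexity this local time would have to be estimated by hand. A secondary but routine point requiring care is the identification of \eqref{eqL} with the Skorokhod problem on the convex set $\ODD^N$ and the verification that Condition~(B) passes to the product domain, so that the existence and uniqueness theorem of \cite{tanaka1979stochastic} applies verbatim to the truncated system.
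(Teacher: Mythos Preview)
Your proposal is correct and follows essentially the same strategy as the paper: invoke Tanaka's existence theorem for reflected SDEs on convex domains via the local Lipschitz bounds of Lemma~\ref{lemlocal}, and rule out explosion by an a~priori second-moment estimate in which convexity makes the local-time contribution sign-definite, followed by BDG and Gr\"onwall. The only cosmetic difference is that you spell out the truncation/stopping-time machinery and translate so that $0\in\DD$, whereas the paper applies It\^o's formula to $|X_t^i-X_0^i|^2$ (using $X_0^i\in\ODD$ as the anchor for the convexity inequality) and delegates the localization step to the cited non-explosion criteria; both routes are standard and equivalent.
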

\begin{proof}
    Lemma \ref{lemlocal} deduce that $F^i$, $i\in[N]$ is locally Lipschitz continuous and has sublinear growth. Consequently, both $\mb{F}$ and $\mb{M}$ are locally Lipschitz continuous and have sublinear growth. If additionally we have the non-explosion criterion \cite[Theorem 3.5]{khasminskii2012stochastic} or \cite[Theorem 3.1]{durrett2018stochastic}, then it guarantees the well-posedness of the particle system \eqref{particle} or \eqref{particlecompact} according to \cite[Theorem 4.1]{tanaka1979stochastic}. Indeed, applying It\^{o}'s formula leads to
\begin{align}\label{firstito}
	d |X_t^i-X_0^i|^2&=2(X_t^i-X_0^i)\cdot[-\lambda(X_t^i-X_\alpha(\rho_t^{N}))dt+\sigma D(X_t^i-X_\alpha(\rho_t^{N}))dB_t^{i}-dL_t^i]\nn\\
&+\sigma^2|X_t^i-X_\alpha(\rho_t^{N})|^2dt\,.
\end{align}
Using the convexity of $\DD$ one has 
\begin{align}
(X_t^i-X_0^i)\cdot dL_t^i=(X_t^i-X_0^i)\cdot n(X_t^i)\mathbf{I}_{\partial\DD}(X_t^i)d|L^i|_t\geq 0\,.
\end{align}
Then \eqref{lemeq1} implies that for all $\tau>0$
\begin{align*}
    \EE\left[\sup_{0\leq t\leq \tau}|X_t^i-X_0^i|^2\right]
    \leq& \lambda \int_0^\tau \EE[|X_s^i-X_0^i|^2]+\EE[|F^i(\mb{X}_s)|^2]ds+\sigma^2\int_0^\tau\EE[|F^i(\mb{X}_s)|^2]ds\nn\\
    & +2\sigma \EE\left[\sup_{0\leq t\leq \tau}\left|\int_0^t(X_s^i-X_0^i)\cdot D(X_s^i-X_\alpha(\rho_s^{N}))dB_s^{i}\right|\right]\\
    \leq& C\int_0^\tau\EE[|X_s^i-X_0^i|^2]ds+C\int_0^\tau\EE[|\mb{X}_s|^2]ds \\ \nonumber
   & +2\sigma \EE\left[\sup_{0\leq t\leq \tau}\left|\int_0^t(X_s^i-X_0^i)\cdot D(X_s^i-X_\alpha(\rho_s^{N}))dB_s^{i}\right|\right]\,.
\end{align*}
It follows from Burkholder--Davis--Gundy inequality that
\begin{align*}
    &\EE\left[\sup_{0\leq t\leq \tau}\left|\int_0^t(X_s^i-X_0^i)\cdot D(X_s^i-X_\alpha(\rho_s^{N}))dB_s^{i}\right|\right]\\
    \leq& C\EE\left[\left(\int_0^\tau |X_s^i-X_0^i|^2|F^i(\mb{X}_s)|^2ds\right)^{1/2}\right] \\ 
    \leq& C\EE\left[\left(\sup_{0\leq t\leq \tau}|X_t^i-X_0^i|^2\int_0^\tau |F^i(\mb{X}_s)|^2ds\right)^{1/2}\right]\\
    \leq & \frac{1}{2} \EE\left[\sup_{0\leq t\leq \tau}|X_t^i-X_0^i|^2\right]+C\int_0^\tau\EE[|F^i(\mb{X}_s)|^2]ds \\ \leq &  \frac{1}{2} \EE\left[\sup_{0\leq t\leq \tau}|X_t^i-X_0^i|^2\right]+C\int_0^\tau\EE[|\mb{X}_s|^2]ds\,.
\end{align*}
This yields that
\begin{align*}\label{moment}
    \EE\left[\sup_{0\leq t\leq \tau}|X_t^i-X_0^i|^2\right]\leq C\int_0^\tau\EE[|X_s^i-X_0^i|^2]ds+C\int_0^\tau\EE[|\mb{X}_s-\mb{X}_0|^2]ds+C\EE[|\mb{X}_0|^2]\,,
\end{align*}
which leads to
\begin{align}
     \EE\left[\sup_{0\leq t\leq \tau}|\mb X_t-\mb X_0|^2\right]\leq 
   C\int_0^\tau\EE[|\mb X_s-\mb X_0|^2]ds+C\EE[|\mb{X}_0|^2]\,.
\end{align}
Therefore Gronwall's inequality concludes that
\begin{equation}
  \EE\left[\sup_{0\leq t\leq \tau}|\mb X_t-\mb X_0|^2\right]\leq  C\EE[|\mb{X}_0|^2]\exp(C\tau)\quad \forall \tau> 0
\end{equation}
where $C>0$ depend only on $\lambda,\sigma$ and $N$. Namely, the solution exists globally in time for each fixed $N$.
\end{proof}

In order to prove the well-posedness of the mean-field dynamics \eqref{Xbareq}, we need the following stability estimate from \cite[Lemma 3.2]{carrillo2018analytical}:
\begin{lem}\label{lemLip}
	Let $\TE$ satisfy Assumption \ref{assum1}-$(1)$ and $\rho,\hat \rho\in \mc{P}_4(\ODD)$ with
 $$\int |x|^4d \rho (dx),\quad \int |x|^4d \hat \rho (dx)\leq K.$$
	Then it holds that
	\begin{equation}
		|X_\alpha(\rho)-X_\alpha(\hat \rho)|\leq c_0W_2(\rho,\hat \rho)\,,
	\end{equation}
	where $c_0$ depends only on $\alpha,L_\TE$ and $K$.
\end{lem}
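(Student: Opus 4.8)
The plan is to follow the argument of \cite[Lemma 3.2]{carrillo2018analytical}. Writing
\begin{equation*}
X_\alpha(\rho)=\frac{1}{\int_{\ODD}\omega_\alpha^\TE\,d\rho}\int_{\ODD}x\,\omega_\alpha^\TE(x)\,d\rho(x),
\end{equation*}
I would decompose
\begin{align*}
X_\alpha(\rho)-X_\alpha(\hat\rho)
&=\frac{\int x\,\omega_\alpha^\TE\,d\rho-\int x\,\omega_\alpha^\TE\,d\hat\rho}{\int\omega_\alpha^\TE\,d\rho}
+\vpran{\int x\,\omega_\alpha^\TE\,d\hat\rho}\vpran{\frac{1}{\int\omega_\alpha^\TE\,d\rho}-\frac{1}{\int\omega_\alpha^\TE\,d\hat\rho}},
\end{align*}
which is readily checked by adding and subtracting $\int x\,\omega_\alpha^\TE\,d\hat\rho$ over $\int\omega_\alpha^\TE\,d\rho$. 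Three ingredients are then needed: a uniform positive lower bound for $\int\omega_\alpha^\TE\,d\rho$ and $\int\omega_\alpha^\TE\,d\hat\rho$; the trivial upper bound $\int|x|\,\omega_\alpha^\TE\,d\hat\rho\le e^{-\alpha\underline\TE}\int|x|\,d\hat\rho\le e^{-\alpha\underline\TE}K^{1/4}$ (using $\omega_\alpha^\TE\le e^{-\alpha\underline\TE}$ from Assumption \ref{assum1}-(1) and Jensen); and Lipschitz-type estimates for $x\mapsto\omega_\alpha^\TE(x)$ and $x\mapsto x\,\omega_\alpha^\TE(x)$ that remain integrable once tested against an optimal coupling $\gamma$ of $\rho$ and $\hat\rho$.

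For the lower bound I would fix $R>0$ with $R^4\ge 2K$; Markov's inequality then gives $\rho(\{|x|\le R\})\ge\tfrac12$, and likewise for $\hat\rho$. Picking any reference point $z\in\ODD$, the linear growth of the Lipschitz constant in \eqref{eqlip} yields $\TE(x)\le\TE(z)+L_\TE(1+|x|+|z|)|x-z|$, so that on $\{|x|\le R\}$ one has $\omega_\alpha^\TE(x)\ge\exp\!\big(-\alpha(\TE(z)+L_\TE(1+R+|z|)(R+|z|))\big)=:c_K>0$, with $c_K$ depending only on $\alpha$, $L_\TE$, $K$ (and the fixed value $\TE(z)$). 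Hence $\int\omega_\alpha^\TE\,d\rho\ge\tfrac12 c_K$ and $\int\omega_\alpha^\TE\,d\hat\rho\ge\tfrac12 c_K$, which keeps both denominators bounded away from $0$ uniformly in $\rho,\hat\rho$ and gives $\big|\tfrac{1}{\int\omega_\alpha^\TE\,d\rho}-\tfrac{1}{\int\omega_\alpha^\TE\,d\hat\rho}\big|\le\tfrac{4}{c_K^2}\big|\int\omega_\alpha^\TE\,d\rho-\int\omega_\alpha^\TE\,d\hat\rho\big|$.

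For the Lipschitz estimates, $|e^{-a}-e^{-b}|\le\max(e^{-a},e^{-b})|a-b|$ together with \eqref{eqlip} gives $|\omega_\alpha^\TE(x)-\omega_\alpha^\TE(y)|\le\alpha e^{-\alpha\underline\TE}L_\TE(1+|x|+|y|)|x-y|$, and also $|x\,\omega_\alpha^\TE(x)-y\,\omega_\alpha^\TE(y)|\le e^{-\alpha\underline\TE}|x-y|+|y|\,|\omega_\alpha^\TE(x)-\omega_\alpha^\TE(y)|$. Integrating these against $\gamma$ and applying Cauchy--Schwarz, every term reduces to bounding $\int(1+|x|+|y|)|x-y|\,d\gamma\le\norm{1+|x|+|y|}_{L^2(\gamma)}W_2(\rho,\hat\rho)$ and $\int|y|(1+|x|+|y|)|x-y|\,d\gamma\le\norm{|y|(1+|x|+|y|)}_{L^2(\gamma)}W_2(\rho,\hat\rho)$. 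The first $L^2(\gamma)$ factor is controlled by the second moments, hence by $K$; the second is controlled by expanding the square and using $\int|x|^2|y|^2\,d\gamma\le(\int|x|^4\,d\gamma)^{1/2}(\int|y|^4\,d\gamma)^{1/2}\le K$ together with $\int|y|^4\,d\gamma=\int|y|^4\,d\hat\rho\le K$ --- this is exactly where the fourth-moment hypothesis $\rho,\hat\rho\in\mc{P}_4(\ODD)$ enters. Collecting the three estimates in the decomposition above yields $|X_\alpha(\rho)-X_\alpha(\hat\rho)|\le c_0 W_2(\rho,\hat\rho)$ with $c_0$ depending only on $\alpha$, $L_\TE$ and $K$.

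The step I expect to be the main obstacle is the uniform (in $\rho$) lower bound on the normalizing constants $\int\omega_\alpha^\TE\,d\rho$: the remainder is bookkeeping with Cauchy--Schwarz, but without a bound valid for every admissible $\rho$ the denominators could degenerate, and it is precisely the truncation-plus-Markov argument powered by the fourth-moment control that rules this out.
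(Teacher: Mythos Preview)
Your proposal is correct and follows exactly the approach of \cite[Lemma 3.2]{carrillo2018analytical}, which is precisely what the paper invokes here without reproducing the proof. The decomposition, the Markov-based lower bound on the normalizing constants, and the Cauchy--Schwarz estimates against an optimal coupling are all as in that reference; the only cosmetic point is that your constant $c_0$ also picks up a dependence on $\underline\TE$ (through $e^{-\alpha\underline\TE}$) and on the fixed value $\TE(z)$, which the statement tacitly absorbs into the dependence on $\TE$.
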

We will also need the following estimate on $X_\alpha(\rho)$ \cite[Proposition A.3]{gerber2023mean}:
\begin{lem}\label{lemXalpha}
    Let $\TE$ satisfy Assumption \ref{assum1}-$(2)$ and $\rho\in \mc{P}_p(\ODD)$. Then there exists some constant $c_1>0$ depending only on $c_u,c_\ell,C_u,C_\ell,\alpha$ such that it holds
    \begin{equation}
    |X_\alpha(\rho)|\leq \left(c_1\int_{\ODD}|x|^p\rho(dx)\right)^{\frac{1}{p}}\quad \mbox{ for }p\geq 1\,.
    \end{equation}
\end{lem}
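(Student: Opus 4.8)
The plan is to bound the weighted average $X_\alpha(\rho) = \frac{\int x\,\omega_\alpha^\TE(x)\,\rho(dx)}{\int \omega_\alpha^\TE(x)\,\rho(dx)}$ by splitting the numerator integral over a large ball $B_r := \{|x|\le r\}$ and its complement, and then optimizing over the radius $r$. First I would write, for any $r>0$,
\begin{align*}
|X_\alpha(\rho)| \le \frac{\int_{B_r} |x|\,\omega_\alpha^\TE(x)\,\rho(dx)}{\int \omega_\alpha^\TE(x)\,\rho(dx)} + \frac{\int_{B_r^c} |x|\,\omega_\alpha^\TE(x)\,\rho(dx)}{\int \omega_\alpha^\TE(x)\,\rho(dx)} \le r + \frac{\int_{B_r^c} |x|\,\omega_\alpha^\TE(x)\,\rho(dx)}{\int \omega_\alpha^\TE(x)\,\rho(dx)}.
\end{align*}
For the tail term, the key is to use the two-sided bound from Assumption \ref{assum1}-$(2)$. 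The \emph{lower} bound $\TE(x)-\underline\TE \le c_u(|x|^2+1)$ gives $\omega_\alpha^\TE(x) = e^{-\alpha\TE(x)} \ge e^{-\alpha\underline\TE}e^{-\alpha c_u(|x|^2+1)}$, which when restricted to, say, a fixed small ball of positive $\rho$-measure would lower-bound the denominator — but a cleaner route is to observe directly that the denominator satisfies $\int \omega_\alpha^\TE(x)\,\rho(dx) \ge e^{-\alpha\underline\TE}\int e^{-\alpha c_u(|x|^2+1)}\rho(dx)$, and one only needs a quantitative handle. The \emph{upper} bound $\TE(x)-\underline\TE \ge c_\ell(|x|^2-1)$ gives $\omega_\alpha^\TE(x) \le e^{-\alpha\underline\TE}e^{-\alpha c_\ell(|x|^2-1)}$, so on $B_r^c$ the integrand $|x|\,\omega_\alpha^\TE(x)$ decays like $|x|e^{-\alpha c_\ell|x|^2}$, which for $r$ large is itself bounded by a small multiple of $|x|^p e^{-\alpha c_\ell(|x|^2-1)}$ — the point being to trade the Gaussian-type decay against the polynomial weight and against the denominator.

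The cleanest execution, following \cite{gerber2023mean}, is probably to avoid splitting and instead use Jensen's inequality with the probability measure $\eta_\alpha(dx) := \frac{\omega_\alpha^\TE(x)\,\rho(dx)}{\int \omega_\alpha^\TE\,d\rho}$: by Jensen, $|X_\alpha(\rho)|^p = \big|\int x\,\eta_\alpha(dx)\big|^p \le \int |x|^p\,\eta_\alpha(dx) = \frac{\int |x|^p\,\omega_\alpha^\TE(x)\,\rho(dx)}{\int \omega_\alpha^\TE(x)\,\rho(dx)}$. It then remains to show this ratio is $\le c_1\int |x|^p\,\rho(dx)$. Using $e^{-\alpha\TE(x)} = e^{-\alpha\underline\TE}e^{-\alpha(\TE(x)-\underline\TE)}$ in both numerator and denominator cancels the $e^{-\alpha\underline\TE}$ factor, so it suffices to bound $\frac{\int |x|^p e^{-\alpha(\TE(x)-\underline\TE)}\rho(dx)}{\int e^{-\alpha(\TE(x)-\underline\TE)}\rho(dx)}$. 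For the denominator one uses the upper bound in Assumption \ref{assum1}-$(2)$: $e^{-\alpha(\TE(x)-\underline\TE)} \ge e^{-\alpha c_u(|x|^2+1)}$, and then the elementary fact that $\int e^{-\alpha c_u(|x|^2+1)}\rho(dx) \ge \rho(B_R) e^{-\alpha c_u(R^2+1)}$ for any $R$ with $\rho(B_R)\ge \tfrac12$ (which exists by Markov). For the numerator one uses the \emph{lower} bound $\TE(x)-\underline\TE \ge c_\ell(|x|^2-1)$, giving $|x|^p e^{-\alpha(\TE(x)-\underline\TE)} \le |x|^p e^{\alpha c_\ell} e^{-\alpha c_\ell|x|^2} \le C(p,\alpha,c_\ell)$ uniformly in $x$ — but that loses the $\int|x|^p\rho(dx)$ factor, so instead one keeps $|x|^p$ and bounds $e^{-\alpha(\TE(x)-\underline\TE)}\le e^{\alpha c_\ell}$ crudely, yielding $\int |x|^p e^{-\alpha(\TE(x)-\underline\TE)}\rho(dx) \le e^{\alpha c_\ell}\int |x|^p\,\rho(dx)$. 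Combining, $\frac{\int |x|^p e^{-\alpha(\TE(x)-\underline\TE)}\rho(dx)}{\int e^{-\alpha(\TE(x)-\underline\TE)}\rho(dx)} \le \frac{e^{\alpha c_\ell}}{\tfrac12 e^{-\alpha c_u(R^2+1)}}\int |x|^p\rho(dx)$, and since $R$ can be chosen depending only on a normalization (e.g. $R = (2\int|x|^p\rho(dx))^{1/p}$ by Markov), one absorbs everything into a constant $c_1 = c_1(c_u,c_\ell,p,\alpha)$, after taking $p$-th roots.

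The main obstacle — and the place where one must be careful — is getting a lower bound on the denominator $\int \omega_\alpha^\TE\,d\rho$ that is \emph{uniform} over all $\rho\in\mc P_p(\ODD)$ yet still produces a constant depending only on $c_u,c_\ell,p,\alpha$ and not on $\rho$ itself. The Markov-type argument above handles this because the radius $R$ at which $\rho$ puts half its mass enters only through $e^{-\alpha c_u R^2}$, and $R$ is itself controlled by $\int|x|^p\rho(dx)$, which is exactly the quantity appearing on the right-hand side — so the apparent $\rho$-dependence cancels against the $\int|x|^p\rho(dx)$ we are trying to produce. One should double-check that this cancellation is genuine and not circular; the resolution is that the bound is scale-covariant in the right way, or alternatively one normalizes $\rho$ first. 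A secondary technical point is justifying Jensen's inequality (finiteness of the relevant integrals), which follows from $\rho\in\mc P_p$ together with the boundedness of $\omega_\alpha^\TE$ on $\ODD$ (since $\TE$ is continuous and bounded below).
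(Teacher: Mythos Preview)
The paper does not supply its own proof of this lemma; it is imported verbatim from \cite[Proposition~A.3]{gerber2023mean}. So there is no in-paper argument to compare against, only the cited one.

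Your proposal assembles the right ingredients but the final ``cleanest execution'' contains a genuine gap. After Jensen you bound the numerator by $e^{\alpha c_\ell}\int|x|^p\rho(dx)$ and the denominator by $\tfrac12\,e^{-\alpha c_u(R^2+1)}$ with $R=(2M_p)^{1/p}$, $M_p:=\int|x|^p\rho(dx)$. This gives
\[
|X_\alpha(\rho)|^p \ \le\ 2\,e^{\alpha c_\ell}\,e^{\alpha c_u\left((2M_p)^{2/p}+1\right)}\,M_p,
\]
and the prefactor grows like $\exp\!\big(\alpha c_u\,M_p^{2/p}\big)$: it is \emph{not} bounded uniformly over $\rho\in\mc P_p(\ODD)$. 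Your resolution --- ``the bound is scale-covariant in the right way'' or ``one normalizes $\rho$ first'' --- does not work, because the dependence on $M_p$ is exponential, not polynomial; no rescaling absorbs $e^{c\,M_p^{2/p}}M_p$ into $c_1 M_p$ with $c_1$ independent of $M_p$.

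The missing step is the one you sketched in your \emph{first} paragraph and then abandoned: you must also split the numerator and use the lower growth bound on its tail. Concretely, with $R$ as above choose $R'$ so that $c_\ell R'^2\ge c_u(R^2+1)+c_\ell$ (hence $R'\sim\sqrt{c_u/c_\ell}\,R$). On $\{|x|\le R'\}$ bound $|x|^p\le R'^p$ but \emph{retain} the Gibbs weight, giving a contribution $\le R'^p Z$ with $Z=\int e^{-\alpha(\TE-\underline\TE)}d\rho$; on $\{|x|>R'\}$ use $e^{-\alpha(\TE(x)-\underline\TE)}\le e^{\alpha c_\ell}e^{-\alpha c_\ell|x|^2}$ together with $\sup_{t\ge R'}t^pe^{-\alpha c_\ell t^2}=R'^pe^{-\alpha c_\ell R'^2}$ (valid once $R'$ is past the maximizer $\sqrt{p/(2\alpha c_\ell)}$). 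Dividing by $Z\ge\tfrac12 e^{-\alpha c_u(R^2+1)}$, the choice of $R'$ makes the tail contribution $\le 2R'^p$, so the ratio is $\le 3R'^p\le C(c_u,c_\ell,p)\,R^p\le C'\,M_p$. For $M_p<\tfrac12$ the splitting is unnecessary: take $R=1$, so $Z\ge\tfrac12 e^{-2\alpha c_u}$, and the crude bound $\int|x|^pe^{-\alpha(\TE-\underline\TE)}d\rho\le M_p$ already gives the claim.
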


\begin{thm}\label{wpxbar}
Let $\DD$ satisfy Assumption \ref{assum},  $\TE$ satisfy Assumption \ref{assum1}, and the initial data $\rho_0\in\mc{P}_4(\ODD)$. Then there exists a unique process $\OX\in\mc{C}([0,T];\ODD)$, $T>0$ satisfying the mean-field dynamic \eqref{Xbareq} in strong sense with $\rho\in \mc{C}([0,T];\mc P_2(\ODD))$.
\end{thm}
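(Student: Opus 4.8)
The plan is to run a Sznitman-type fixed-point argument adapted to the reflecting dynamics. Fix once and for all a point $y_0\in\DD$ and, for parameters $T_0>0$ and $K>0$ to be chosen, work on the set $\mc{X}_K$ of curves $\rho\in\mc{C}([0,T_0];\mc{P}_2(\ODD))$ with $\rho_0=\mathrm{Law}(\OX_0)$ and $\sup_{t\le T_0}\int_{\ODD}|x|^4\rho_t(dx)\le K$, metrized by $d_{T_0}(\rho,\hat\rho):=\sup_{t\le T_0}W_2(\rho_t,\hat\rho_t)$. Since $(\mc{P}_2(\ODD),W_2)$ is complete and fourth moments are lower semicontinuous under $W_2$-convergence, $\mc{X}_K$ is a closed subset of a complete metric space, hence itself complete. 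The first step is to "freeze" the law: given $\rho\in\mc{X}_K$, the curve $t\mapsto X_\alpha(\rho_t)$ is continuous and, by Lemma \ref{lemXalpha} with $p=4$, bounded on $[0,T_0]$ in terms of $K$; substituting it into \eqref{eqXbar}--\eqref{eqLbar} turns the system into a reflected SDE on the convex domain $\ODD$ whose drift and diffusion are affine in the state variable with coefficients bounded uniformly in $t$. Tanaka's theory of Skorokhod SDEs on convex domains \cite{tanaka1979stochastic} (for which Assumption \ref{assum} is precisely the needed Condition (B)) then provides a pathwise unique strong solution $\OX^\rho$, defining the map $\mc{T}\colon\rho\mapsto\mathrm{Law}(\OX^\rho)$.

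Second, I would show that $\mc{T}$ maps $\mc{X}_K$ into itself for a suitable choice of $K$ and $T_0$. Applying It\^o's formula to $|\OX^\rho_t-y_0|^4$ and using, as in the proof of Theorem \ref{thmparticle}, that convexity of $\DD$ forces $\la\OX^\rho_t-y_0,\,d\OL^\rho_t\ra\ge0$, together with the Burkholder--Davis--Gundy inequality, Young's inequality, and the bound on $|X_\alpha(\rho_t)|$, a Gr\"onwall argument gives an estimate of the form $\sup_{t\le T_0}\EE[|\OX^\rho_t-y_0|^4]\le C(1+\EE[|\OX_0|^4]+c_1KT_0)e^{CT_0}$ with $C$ depending only on $\lambda,\sigma,d,y_0$. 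Taking first $K$ large (depending on $\rho_0$) and then $T_0$ small makes the right-hand side $\le K$, so $\mc{T}(\mc{X}_K)\subset\mc{X}_K$; continuity of $t\mapsto\mathrm{Law}(\OX^\rho_t)$ in $W_2$ follows from path continuity of $\OX^\rho$ and the fourth-moment bound via dominated convergence.

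Third is the contraction estimate. For $\rho,\hat\rho\in\mc{X}_K$ I would solve the two frozen equations driven by the \emph{same} Brownian motion and with the same initial datum and apply It\^o to $|\OX^\rho_t-\OX^{\hat\rho}_t|^2$. The crucial point is that the boundary contribution $-2\la\OX^\rho_t-\OX^{\hat\rho}_t,\,d\OL^\rho_t-d\OL^{\hat\rho}_t\ra$ is nonpositive, by the monotonicity of the Skorokhod reflection on a convex set (for $x\in\partial\DD$ and any $\hat x\in\ODD$ one has $\la x-\hat x,n(x)\ra\ge0$, and symmetrically). What remains is affine in $\OX^\rho_t-\OX^{\hat\rho}_t$ plus a source term proportional to $|X_\alpha(\rho_t)-X_\alpha(\hat\rho_t)|$, which Lemma \ref{lemLip} controls by $c_0W_2(\rho_t,\hat\rho_t)$ precisely because both $\rho_t,\hat\rho_t$ have fourth moment $\le K$; another BDG/Gr\"onwall step yields $\sup_{t\le T_0}\EE[|\OX^\rho_t-\OX^{\hat\rho}_t|^2]\le C(T_0)\,d_{T_0}(\rho,\hat\rho)^2$ with $C(T_0)\to0$ as $T_0\to0$. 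Since $W_2(\mathrm{Law}(\OX^\rho_t),\mathrm{Law}(\OX^{\hat\rho}_t))^2\le\EE[|\OX^\rho_t-\OX^{\hat\rho}_t|^2]$, shrinking $T_0$ further makes $\mc{T}$ a contraction, and Banach's fixed point theorem produces a unique $\rho\in\mc{X}_K$ with $\mc{T}(\rho)=\rho$; the associated solution $\OX^\rho$ then solves \eqref{Xbareq} on $[0,T_0]$ with $\rho\in\mc{C}([0,T_0];\mc{P}_2(\ODD))$. Pathwise uniqueness of the nonlinear system follows since any solution has uniformly bounded fourth moment on $[0,T_0]$ by the estimate of the second step, hence its law lies in some $\mc{X}_K$ and is a fixed point of $\mc{T}$, forcing equality of laws and then of paths by Step 1. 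Finally, because the threshold $T_0$ depends only on the fourth-moment bound, which grows at most exponentially and so stays finite on $[0,T]$, one restarts from $\rho_{T_0}\in\mc{P}_4(\ODD)$ and iterates finitely many times to cover any $T>0$.

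The main obstacle I anticipate is the interaction between the reflection term and the McKean--Vlasov nonlinearity: one must extract the correct sign of the boundary contributions from the convexity of $\DD$ in both the fourth-moment estimate and the coupling estimate, while simultaneously propagating the fourth-moment bound required by Lemma \ref{lemLip}. This is what forces the self-consistent choice of the invariant ball $\mc{X}_K$ and the short-time-plus-concatenation scheme rather than a one-shot global contraction; everything else is a routine combination of Tanaka's well-posedness theory for reflected SDEs on convex domains with Burkholder--Davis--Gundy and Gr\"onwall.
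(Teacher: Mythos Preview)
Your argument is correct, but it proceeds along a genuinely different route from the paper's. You run the classical Sznitman contraction scheme on the space of \emph{law curves} $\rho\in\mc{C}([0,T_0];\mc{P}_2(\ODD))$, using Lemma \ref{lemLip} and the sign of the boundary terms to get a contraction on a short interval, and then concatenate. The paper instead parameterizes by the \emph{consensus-point curve} $u\in\mc{C}([0,T];\R^d)$: it freezes $u$, solves the reflected SDE, and defines $\mc{T}(u)=X_\alpha(g)$ where $g$ is the law of the resulting process. It then shows $\mc{T}$ is compact (the law is H\"older-$\tfrac12$ in $W_2$, hence so is $X_\alpha(g)$ via Lemma \ref{lemLip}, and $\mc{C}^{1/2}\hookrightarrow\mc{C}$ compactly), obtains the a priori bound on $\{u:u=\xi\mc{T}(u)\}$ from Lemma \ref{lemXalpha}, and applies the Leray--Schauder fixed point theorem; uniqueness is then argued separately via Lemma \ref{lemLip}. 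The paper's parameterization by a finite-dimensional-valued curve is slicker and works directly on all of $[0,T]$ without concatenation, at the price of invoking a topological fixed point theorem and a separate uniqueness proof; your Banach contraction approach is more elementary and gives uniqueness for free, but requires the short-time-plus-gluing scheme and the bookkeeping of the fourth-moment ball $\mc{X}_K$. For the gluing step to cover $[0,T]$ in finitely many pieces, make explicit that you choose $K$ once and for all as the global a priori bound $\sup_{t\le T}\EE[|\OX_t|^4]\le\EE[|\OX_0|^4]e^{CT}$ (which follows from the self-consistent Gr\"onwall estimate using Lemma \ref{lemXalpha}), so that $T_0$ is fixed throughout.
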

\begin{proof}
We follow the proof of \cite[Theorem 3.1, Theorem 3.2]{carrillo2010asymptotic}.
For any given $u\in \mc{C}([0,T];\R^d)$, using \cite[Theorem 4.1]{tanaka1979stochastic} we may uniquely solve the following linear SDE
	\begin{subequations}\label{linMVeq}
		\begin{numcases}{}
d\OX_t=-\lambda(\OX_t-u_t)dt+\sigma D(\OX_t-u_t)dB_t-dL_t\,,\\
L_t=\int_0^t n(\OX_s)d|L|_s,\quad |L|_t=\int_0^t \mathbf{I}_{\partial\DD}(\OX_s)d|L|_s\,,
		\end{numcases}
	\end{subequations}
	with the initial data $\OX_0$ distributed according to $\rho_0\in \mc{P}_4(\ODD)$.  In particular
 $\sup_{t\in[0,T]}\EE[|\OX_t|^4]\leq K$ for some $K<\infty$ depends only on $\EE[|\OX_0|^4],T,\lambda$ and $\sigma$.
 Let us denote by $g_t=\mbox{Law}(\OX_t)\in \mc{P}(\ODD)$ then $g\in \mc{C}([0,T];\mc{P}_2(\ODD))$. Indeed, for any $t<s, t,s\in(0,T)$,  it follows from \cite[Theorem 3.1]{tanaka1979stochastic} we have
    \begin{align}
|\OX_t-\OX_s|^2\leq 2\int_s^t(X_\tau-X_s)\cdot[-\lambda(\OX_\tau-u_\tau)d\tau+\sigma D(\OX_\tau-u_\tau)dB_\tau]+\int_s^t\sigma^2|\OX_\tau-u_\tau|^2d\tau\,.
    \end{align}
Then  taking expectation on both sides implies
	\begin{align}
		\EE[|\OX_t-\OX_s|^2]& \leq 2\lambda \int_s^t\EE[|X_\tau-X_s|\cdot| \OX_\tau-u_\tau|]d\tau +\sigma^2\int_s^t\EE[|\OX_\tau-u_\tau|^2]d\tau\nn\\
  &\leq \lambda \int_s^t\EE[|\OX_\tau-u_\tau|^2]d\tau+(\sigma^2+\lambda)\int_s^t\EE[|\OX_\tau-u^1_\tau|^2]d\tau\nn\\
		&\leq C|t-s|\,,
	\end{align}
	for some $C>0$ depending only on $\lambda,\sigma,K$ and $\|u\|_\infty$. Thus we have $W_2(g_t,g_s)\leq C|t-s|^{\frac{1}{2}}$.
	Set
	\begin{align}
		X_\alpha(g_t)=\frac{\int_{\ODD}x\omega_{\alpha}^{\mc{E}}(x)g(t,dx)}{\int_{\ODD}\omega_{\alpha}^{\mc{E}}(x)g(t,dx)}\,,
	\end{align}
	which provides the self-mapping property of the map
	\begin{equation}
		\mc{T}: \mc{C}([0,T];\R^d)\to \mc{C}([0,T];\R^d) \mbox{ with } u\mapsto \mc{T}(u)=X_\alpha(g)
	\end{equation}
	for which we show to be compact later.
	
	 Applying Lemma \ref{lemLip} we obtain
	\begin{equation}
		|X_\alpha(g_t)-X_\alpha(g_s)|\leq c_0W_2(g_t, g_s)\leq c_0C|t-s|^{\frac{1}{2}}\,,
	\end{equation}
	which indicates the H\"{o}lder continuity of $t\mapsto X_\alpha(g_t)$ with the exponent $1/2$. This implies the compactness of $\mc{T}$ because of the compact embedding $\mc{C}^{1/2}([0,T];\R^d)\hookrightarrow \mc{C}([0,T];\R^d)$.
	Now let $u\in  \mc{C}([0,T];\R^d)$ satisfy $u=\xi \mc{T}(u)$ for some $\xi\in[0,1]$. In particular, there exists $g\in \mc{C}([0,T];\mc{P}_2(\ODD))$, i.e. the law of solution $\OX$ to SDE \eqref{linMVeq} such that
	$u=\xi X_\alpha(g)$.  Then according to Lemma \ref{lemXalpha}, we have for all $t\in(0,T)$, it holds
	$
		|u_t|^2=\xi^2|X_\alpha(g_t)|^2\leq \tau^2 c_1\int_{\ODD}|x|^2g_t(dx)\,.
	$
 Following similar computations as in \eqref{firstito}-\eqref{moment} it yields an estimate on $\sup_{t\in[0,T]}\int_{\ODD}|x|^2g_t(dx)=\sup_{t\in[0,T]}\EE[|\OX_t|^2]<\infty$. This implies $\|u\|_\infty < \infty$. 
	Finally, applying the Leray--Schauder fixed point theorem provides a fixed point $u$ for $\mc{T}$ thereby a solution to \eqref{Xbareq}.	As for the uniqueness, we can follow similar arguments in \cite[Theorem 3.1]{carrillo2010asymptotic} by using Lemma \ref{lemLip}. The details are omitted here.
\end{proof}

\subsection{Mean-field limit}
Now let $\{(\OX_t^i)_{t\geq 0}\}_{i=1}^N$ be $N$ independent copies of solutions to the mean-field dynamics \eqref{Xbareq}, so they are i.i.d. with the same distribution $\rho$. First, one can prove the following moment estimates for empirical measures.
\begin{lem}\label{lem:moment}
    Let $\TE$ satisfy Assumption \ref{assum1}, and suppose that  $\rho_0\in\mc{P}_{2p}(\ODD)$  for any $p\geq 2$.
    \begin{enumerate}
        \item Consider particle system \eqref{particle} with $\rho_0^{\otimes N}-$ distributed initial data, and let $\rho_t^N$ be the corresponding empirical measure. Then there exists some constant $c_2>0$ independent of $N$ such that it holds
        \begin{equation}\label{eq(1)}
        \sup_{t\in[0,T]}\left\{\sup_{i\in[N]}\EE[|X_t^i|^p]+\EE[\rho_t^N[|x|^p]]+\EE[|X_\alpha(\rho_t^N)|^p]\right\}<c_2\,.
        \end{equation}
        \item Consider particles  $\{(\OX_t^i)_{t\geq 0}\}_{i=1}^N$  with $\rho_0^{\otimes N}-$ distributed initial data, and let $\bar \rho_t^N$ be the corresponding empirical measure. Then there exists some constant $c_2>0$ independent of $N$ such that it holds 
              \begin{equation}\label{eq(2)}
        \sup_{t\in[0,T]}\left\{\sup_{i\in[N]}\EE[|\OX_t^i|^p]+\EE[\bar\rho_t^N[|x|^p]]+\EE[|X_\alpha(\bar \rho_t^N)|^p]\right\}<c_2\,.
        \end{equation}
    \end{enumerate}
\end{lem}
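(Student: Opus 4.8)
The plan is to establish \eqref{eq(1)} and \eqref{eq(2)} via a single uniform-in-$N$ moment estimate obtained by It\^o's formula applied to $|X_t^i|^p$, controlling the reflection term by convexity and the drift term by Lemma \ref{lemlocal}, then transferring the bound on the $X_t^i$ to bounds on the empirical-measure moments and on $X_\alpha$ via Lemma \ref{lemXalpha}. First I would apply It\^o's formula to $t\mapsto |X_t^i|^p$ (say for even integer $p$, or more carefully using $(\varepsilon+|x|^2)^{p/2}$ and then letting $\varepsilon\to0$ to avoid nonsmoothness at the origin). This produces a drift contribution involving $-\lambda p |X_t^i|^{p-2} X_t^i\cdot F^i(\mathbf X_t)$, a second-order term of order $|X_t^i|^{p-2}|F^i(\mathbf X_t)|^2$, a martingale term, and the reflection term $-p|X_t^i|^{p-2}X_t^i\cdot dL_t^i$. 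The crucial point is that this last term has a favorable sign: since $0\in\DD$ (which follows from Assumption \ref{assum}, as an open ball contained in $\DD$ touches every boundary point; alternatively one may shift coordinates so that $0\in\DD$), convexity of $\DD$ gives $X_t^i\cdot n(X_t^i)\ge 0$ whenever $X_t^i\in\partial\DD$, so $-p|X_t^i|^{p-2}X_t^i\cdot dL_t^i\le 0$ and can simply be dropped — this is the same mechanism used in the proof of Theorem \ref{thmparticle}.

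Next I would bound the remaining terms. By \eqref{lemeq1} we have $|F^i(\mathbf X_t)|\le 2|\mathbf X_t|\le 2\sum_{j}|X_t^j|$, so after taking expectations, using Young's inequality to absorb the $|X_t^i|^{p-2}|F^i|^2$ and $|X_t^i|^{p-1}|F^i|$ factors into powers $|X_t^i|^p$ and $|X_t^j|^p$, and handling the martingale term with Burkholder--Davis--Gundy exactly as in the proof of Theorem \ref{thmparticle} (absorbing $\tfrac12\EE[\sup_{s\le t}|X_s^i|^p]$ on the left), one arrives at
\begin{align*}
\sum_{i=1}^N \EE\Big[\sup_{0\le s\le t}|X_s^i|^p\Big]\le C\,\EE[|\mathbf X_0|^p]+C\int_0^t \sum_{i=1}^N\EE\big[\sup_{0\le r\le s}|X_r^i|^p\big]\,ds,
\end{align*}
with $C=C(\lambda,\sigma,p)$ \emph{independent of $N$} — here it is essential that all cross-terms $|X_t^j|^p$ summed over $i$ contribute a factor $N$ that is exactly matched by the $\tfrac1N$ averaging inside $F^i$, so no net $N$-dependence survives. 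Gr\"onwall's inequality then yields $\sup_{t\in[0,T]}\sum_{i=1}^N\EE[|X_t^i|^p]\le C\,\EE[|\mathbf X_0|^p]e^{CT}$. Since the initial data are $\rho_0^{\otimes N}$-distributed with $\rho_0\in\mc P_{2p}(\ODD)$, we have $\EE[|\mathbf X_0|^p]\le C_p\, N^{p/2}\big(\int|x|^p\rho_0(dx)\big)$ or more simply $\EE[|\mathbf X_0|^p]\le \sum_i\EE[|X_0^i|^p]=N\int|x|^p\rho_0(dx)$ after expanding $|\mathbf X_0|^p\le N^{p/2-1}\sum_i|X_0^i|^p$; dividing by $N$ gives the uniform bound $\sup_i\sup_t\EE[|X_t^i|^p]<c_2$. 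The empirical-measure moment $\EE[\rho_t^N[|x|^p]]=\tfrac1N\sum_i\EE[|X_t^i|^p]<c_2$ is then immediate, and the bound on $\EE[|X_\alpha(\rho_t^N)|^p]$ follows from Lemma \ref{lemXalpha} applied pathwise to the random measure $\rho_t^N$, which gives $|X_\alpha(\rho_t^N)|^p\le c_1\,\rho_t^N[|x|^p]$. Part (2) is proved identically, with the $\OX_t^i$ being i.i.d. copies so that the moment computation is in fact simpler (no $N$-bookkeeping needed — one just runs the single-particle argument).

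The main obstacle I anticipate is purely technical rather than conceptual: justifying the application of It\^o's formula to the non-smooth function $|x|^p$ (for non-even $p$) in the presence of the reflecting boundary process $L_t^i$, and keeping the constant in the Gr\"onwall step genuinely independent of $N$. The first is resolved by the standard mollification $(\varepsilon+|x|^2)^{p/2}$ together with a localization/stopping-time argument and passage to the limit, noting that the generalized It\^o formula for Skorokhod SDEs (see \cite[Theorem 3.1]{tanaka1979stochastic}, already invoked above) accommodates the reflection term. The second requires only care that every estimate is stated per-particle or in the $\tfrac1N\sum_i$-averaged form before Gr\"onwall is applied, which the structure of $F^i$ makes automatic.
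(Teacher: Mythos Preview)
Your overall plan---It\^o on a $p$th-power functional, drop the reflection term by convexity, close a Gr\"onwall loop, then invoke Lemma \ref{lemXalpha}---matches the paper's, but the $N$-independence step has a genuine gap. The bound \eqref{lemeq1} you cite, $|F^i(\mathbf X_t)|\le 2|\mathbf X_t|$, contains no $\tfrac1N$ at all: $|\mathbf X_t|^2=\sum_j|X_t^j|^2$, and the weighted average $X_\alpha(\rho_t^N)$ is controlled by $\max_j|X_t^j|$, not by $\tfrac1N\sum_j|X_t^j|$. With this bound, the second-order term $|X_t^i|^{p-2}|F^i|^2\le 4|X_t^i|^{p-2}\sum_j|X_t^j|^2$; after Young and summing over $i$ you get $CN\sum_i|X_t^i|^p$, so the Gr\"onwall constant acquires a factor of $N$. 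Your remark that ``the $\tfrac1N$ averaging inside $F^i$'' cancels the sum over $i$ is not supported by \eqref{lemeq1}. (The ``or more simply $\EE[|\mathbf X_0|^p]\le \sum_i\EE[|X_0^i|^p]$'' claim is also backwards for $p>2$, since $(\sum a_j)^{p/2}\ge\sum a_j^{p/2}$.)

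The paper repairs this in two moves you should import \emph{into} the Gr\"onwall step rather than only at the end. First, it uses Lemma \ref{lemXalpha} pathwise to get $|X_\alpha(\rho_t^N)|^p\le c_1\,\rho_t^N[|x|^p]=\tfrac{c_1}{N}\sum_j|X_t^j|^p$---this is where the genuine $1/N$ lives, and it requires Assumption \ref{assum1}-(2), not just the crude \eqref{lemeq1}. Second, it uses \emph{exchangeability} of the particles, so that $\EE[\rho_t^N[|x|^p]]=\EE[|X_t^1|^p]$ and the differential inequality closes on a single particle with constants depending only on $\lambda,\sigma,p,c_1$. Two further differences worth noting: the paper applies It\^o to $|X_t^i-X_0^i|^p$ rather than $|X_t^i|^p$, so the reflection sign comes for free from $X_0^i\in\ODD$ (Assumption \ref{assum} does \emph{not} imply $0\in\DD$; your coordinate shift works but is unnecessary); and the paper takes expectation directly rather than using BDG, with the hypothesis $\rho_0\in\mc P_{2p}(\ODD)$ serving precisely to make the It\^o integral a true martingale.
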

\begin{proof}
    Arguments for $(1)$ and $(2)$ are parallel. Here we only prove $(1)$ for the case $p=4$.  Indeed, for any fixed $i\in[N]$ applying It\^{o}'s formula leads to
\begin{align}\label{itodiff'}
	d |X_t^i-X_0^i|^4&=4|X_t^i-X_0^i|^2(X_t^i-X_0^i)\cdot[-\lambda(X_t^i-X_\alpha(\rho_t^{N}))dt\\ &+\sigma D(X_t^i-X_\alpha(\rho_t^{N}))dB_t^{i}-dL_t^i]\nn + 6\sigma^2|X_t^i-X_0^i|^2|X_t^i-X_\alpha(\rho_t^{N})|^2dt\,.
\end{align}
Using the convexity of $\DD$ and taking expectation on both sides one has 
\begin{align*}
    & d \EE[|X_t^i-X_0^i|^4] \\ 
    \leq & 3(\lambda+\sigma^2)\EE[|X_t^i-X_0^i|^4dt+(\lambda+3\sigma^2)\EE[|X_t^i-X_\alpha(\rho_t^{N})|^4]dt \nn\\
    \leq & 3(\lambda+\sigma^2)\EE[|X_t^i-X_0^i|^4dt+8(\lambda+3\sigma^2)\EE[|X_t^i|^4]dt 
    +8(\lambda+3\sigma^2)\EE[|X_\alpha(\rho_t^{N})|^4]dt\nn\\
    \leq & 3(\lambda+\sigma^2)\EE[|X_t^i-X_0^i|^4dt+8(\lambda+3\sigma^2)\EE[|X_t^i|^2]dt +8(\lambda+3\sigma^2)c_1\EE[(\rho_t^N[|x|^4])]dt\,,
\end{align*}
where in the last inequality we used Lemma \ref{lemXalpha}. {Notice that here It\^{o}'s integral term disappears, namely
\begin{equation}
    \EE\left[4|X_t^i-X_0^i|^2(X_t^i-X_0^i)\cdot \sigma D(X_t^i-X_\alpha(\rho_t^{N}))dB_t^{i}\right]=0
\end{equation}
because it satisfies,
\begin{equation}
    \EE\left[\int_0^t\left|4|X_s^i-X_0^i|^2(X_s^i-X_0^i)\cdot \sigma D(X_s^i-X_\alpha(\rho_s^{N}))\right|^2ds\right]\leq C\int_0^t\EE\left[|X_s^i|^8+|X_0^i|^8\right]ds<\infty\,,
\end{equation}
which can be guaranteed by the assumption $\rho_0\in \mc{P}_{8}(\ODD)$.}

Since particles are exchangeable, it implies that
\begin{equation}\label{2.26}
   \EE[\rho_t^N[|x|^4]]=\EE[|X_t^i|^4]\,.
\end{equation}
Thus we have
\begin{align*}
  d \EE[|X_t^i-X_0^i|^4]  \leq & 3(\lambda+\sigma^2)\EE[|X_t^i-X_0^i|^4dt+8(\lambda+3\sigma^2)(1+c_1)\EE[|X_t^i|^4]dt\nn\\
  \leq & 3(\lambda+\sigma^2)\EE[|X_t^i-X_0^i|^4dt+64(\lambda+3\sigma^2)(1+c_1)\EE[|X_t^i-X_0^i|^4]dt\\
  + &64(\lambda+3\sigma^2)(1+c_1)\EE[|X_0^i|^4]dt
\end{align*}
Gronwall's inequality gives that
\begin{equation}
    \sup_{t\in[0,T],i\in[N]}\EE[|X_t^i|^4]\leq C
\end{equation}
for some $C>0$ independent of $N$.  Then estimate \eqref{eq(1)} follows from \eqref{2.26} and Lemma \ref{lemXalpha}.
\end{proof}

Then we recall a large deviation bound estimate from \cite[Lemma 3.7]{gerber2023mean}:
\begin{lem}\label{lemLLN}
	Let  $\TE$  satisfy Assumption \ref{assum1}, and $\bar \rho_t^N$ be the empirical measure associated to the particles $\{(\OX_t^i)_{t\in[0,T]}\}_{i=1}^N$ satisfying \eqref{Xbareq} up to any time $T>0$, which are i.i.d. with common distribution $\rho$ satisfying $\sup_{t\in[0,T]}\int_{\ODD}|x|^4\rho_t(dx)<\infty$. Then there exists some constant $c_3>0$ depending only on $\TE,\alpha, T$ and $\sup_{t\in[0,T]}\int_{\ODD}|x|^4\rho_t(dx)$ such that
	\begin{equation}
		\sup\limits_{t\in[0,T]}\EE[|X_\alpha(\bar \rho_t^N)-X_\alpha(\rho_t)|^2]\leq c_3N^{-1}\,.
	\end{equation}
\end{lem}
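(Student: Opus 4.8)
The plan is to exploit the quotient structure of the consensus map. Writing $M_\alpha(\mu):=\int_{\ODD}x\,\omega_\alpha^{\TE}(x)\,\mu(dx)$ and $Z_\alpha(\mu):=\int_{\ODD}\omega_\alpha^{\TE}(x)\,\mu(dx)$, one has $X_\alpha(\mu)=M_\alpha(\mu)/Z_\alpha(\mu)$, and the decisive point is that $M_\alpha$ and $Z_\alpha$ are \emph{linear} functionals of $\mu$ tested against \emph{bounded} functions: clearly $0<\omega_\alpha^{\TE}(x)\le e^{-\alpha\underline{\TE}}$, while the lower bound $\TE(x)-\underline{\TE}\ge c_\ell(|x|^2-1)$ of Assumption~\ref{assum1}-(2) makes $x\mapsto|x|\,\omega_\alpha^{\TE}(x)$ bounded, say by $B_\alpha<\infty$ depending only on $\alpha$ and the constants of Assumption~\ref{assum1}-(2). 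Since $\{\OX_t^i\}_{i=1}^N$ are i.i.d.\ with law $\rho_t$, the numerator $M_\alpha(\bar\rho_t^N)=\frac1N\sum_i\OX_t^i\,\omega_\alpha^{\TE}(\OX_t^i)$ and the denominator $Z_\alpha(\bar\rho_t^N)=\frac1N\sum_i\omega_\alpha^{\TE}(\OX_t^i)>0$ are empirical averages of i.i.d.\ bounded random variables with means $M_\alpha(\rho_t)$, $Z_\alpha(\rho_t)$, so the elementary Monte Carlo variance estimate yields, uniformly in $t\in[0,T]$,
\[
\EE\big[|M_\alpha(\bar\rho_t^N)-M_\alpha(\rho_t)|^2\big]\le \frac{B_\alpha^2}{N},\qquad
\EE\big[|Z_\alpha(\bar\rho_t^N)-Z_\alpha(\rho_t)|^2\big]\le \frac{e^{-2\alpha\underline{\TE}}}{N}.
\]

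Next I would pass from the quotient to these two pieces through the identity
\[
X_\alpha(\bar\rho_t^N)-X_\alpha(\rho_t)=\frac{M_\alpha(\bar\rho_t^N)-M_\alpha(\rho_t)}{Z_\alpha(\bar\rho_t^N)}-X_\alpha(\rho_t)\,\frac{Z_\alpha(\bar\rho_t^N)-Z_\alpha(\rho_t)}{Z_\alpha(\bar\rho_t^N)},
\]
which gives $|X_\alpha(\bar\rho_t^N)-X_\alpha(\rho_t)|^2\le 2Z_\alpha(\bar\rho_t^N)^{-2}\big(|M_\alpha(\bar\rho_t^N)-M_\alpha(\rho_t)|^2+|X_\alpha(\rho_t)|^2|Z_\alpha(\bar\rho_t^N)-Z_\alpha(\rho_t)|^2\big)$. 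Two deterministic ingredients tame the factor $Z_\alpha(\bar\rho_t^N)^{-2}$ on a high-probability event. First, Jensen's inequality for $s\mapsto e^{-\alpha s}$ combined with $\int\TE\,d\rho_t\le\underline{\TE}+c_u(1+\int|x|^2d\rho_t)\le\underline{\TE}+c_u(1+M^{1/2})$, where $M:=\sup_{t\le T}\int|x|^4d\rho_t<\infty$, gives the $t$-uniform lower bound $Z_\alpha(\rho_t)\ge\underline{Z}:=\exp(-\alpha(\underline{\TE}+c_u(1+M^{1/2})))>0$. Second, Lemma~\ref{lemXalpha} (with $p=4$) bounds $|X_\alpha(\rho_t)|^4\le c_1M$ and, applied to the empirical measure, $\EE[|X_\alpha(\bar\rho_t^N)|^4]\le c_1\EE[\bar\rho_t^N[|x|^4]]=c_1\int|x|^4d\rho_t\le c_1M$. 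On the event $A_t:=\{Z_\alpha(\bar\rho_t^N)\ge\underline{Z}/2\}$ one has $Z_\alpha(\bar\rho_t^N)^{-2}\le 4\underline{Z}^{-2}$, and the two Monte Carlo bounds then give $\EE[\mathbf{1}_{A_t}|X_\alpha(\bar\rho_t^N)-X_\alpha(\rho_t)|^2]\le C/N$ with $C=C(\TE,\alpha,M)$.

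The only genuinely delicate part is the complementary small-denominator event $A_t^c\subset\{|Z_\alpha(\bar\rho_t^N)-Z_\alpha(\rho_t)|\ge\underline{Z}/2\}$, on which the integrand $|X_\alpha(\bar\rho_t^N)-X_\alpha(\rho_t)|^2$ is not bounded by a deterministic constant. Here Chebyshev only gives $\PP(A_t^c)\le C/N$, and combining this with Cauchy--Schwarz and the fourth-moment bound above leaves an $O(N^{-1/2})$ error — insufficient. To recover the rate $N^{-1}$ I would instead use that $Z_\alpha(\bar\rho_t^N)-Z_\alpha(\rho_t)$ is a normalized sum of i.i.d.\ variables with values in $[0,e^{-\alpha\underline{\TE}}]$ and invoke Hoeffding's inequality, obtaining $\PP(A_t^c)\le 2e^{-cN}$ with $c=c(\alpha,\underline{\TE},M)>0$ uniform in $t$; then, by Cauchy--Schwarz and the fourth-moment bounds, $\EE[\mathbf{1}_{A_t^c}|X_\alpha(\bar\rho_t^N)-X_\alpha(\rho_t)|^2]\le C'e^{-cN/2}\le C''/N$, since $\sup_{N\ge1}Ne^{-cN/2}<\infty$. (Equivalently, one may replace Hoeffding by a Marcinkiewicz--Zygmund estimate $\EE[|Z_\alpha(\bar\rho_t^N)-Z_\alpha(\rho_t)|^4]\le C_2N^{-2}$ and conclude by Chebyshev.) Adding the contributions of $A_t$ and $A_t^c$ and taking $\sup_{t\in[0,T]}$ — harmless since every constant depends on $t$ only through $M$ — yields the claim with $c_3=c_3(\TE,\alpha,T)$. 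The main obstacle is thus precisely this small-denominator event: plain second-moment arguments lose a factor $N^{1/2}$, and it is the boundedness of $\omega_\alpha^{\TE}$, upgraded to an exponential (or high-order polynomial) concentration bound, that restores the optimal Monte Carlo rate $N^{-1}$.
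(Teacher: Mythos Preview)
Your argument is correct. The paper, however, does not give its own proof of this lemma --- it is quoted from \cite[Lemma~3.7]{gerber2023mean} --- so there is no in-text proof to compare against. Your self-contained approach is the natural one: split $X_\alpha=M_\alpha/Z_\alpha$, apply the i.i.d.\ variance bound to the bounded integrands $\omega_\alpha^{\TE}$ and $x\,\omega_\alpha^{\TE}$ (the latter bounded thanks to the quadratic coercivity in Assumption~\ref{assum1}-(2)), lower-bound $Z_\alpha(\rho_t)$ via Jensen, and isolate the small-denominator event. You correctly identify that Chebyshev alone on this bad set loses a factor $N^{1/2}$, and that Hoeffding (or a fourth-moment Marcinkiewicz--Zygmund bound) restores the Monte Carlo rate; this is precisely the mechanism the paper itself uses downstream in the proof of Theorem~\ref{thmmean}, where the analogous bad event $A_{N,t}$ is controlled by a higher-moment bound giving $\PP(A_{N,t})\le CN^{-2}$.
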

Moreover, we will also need the following improved stability estimate \cite[Corollary 3.3]{gerber2023mean} on $X_\alpha(\rho)$:
\begin{lem}\label{lemimprove}
    Suppose $\TE$ satisfy Assumption \ref{assum1}. Then for all $R>0$ there exists some constant $L>0$ depending only on $L_\TE,R,\alpha$ such that
    \begin{align}
        |X_\alpha(\rho)-X_\alpha(\hat \rho)|\leq LW_2(\rho,\hat\rho),\quad \forall~(\rho,\hat\rho)\in \mc{P}_{2,R}(\ODD)\times \mc{P}_{2}(\ODD)\,,
    \end{align}
    where $\mc{P}_{2,R}(\ODD):=\{\rho\in \mc{P}_{2}(\ODD):~\int_{\ODD}|x|^2\rho(dx)\leq R\}$.
\end{lem}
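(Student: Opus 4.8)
The plan is to imitate the quotient-splitting argument behind Lemma~\ref{lemLip} (and \cite[Lemma~3.2]{carrillo2018analytical}), but to feed it two structural facts that let a second-moment bound on a single measure replace the fourth-moment bound on both. Write $\omega_\alpha:=\omega_\alpha^{\TE}$ and $M_\mu:=\int_{\ODD}\omega_\alpha\,d\mu$. First I would record that $\|\omega_\alpha\|_\infty=e^{-\alpha\underline\TE}$ and, crucially, that $\omega_\alpha$ is \emph{globally} Lipschitz on $\ODD$ with a constant $\Lambda=\Lambda(\alpha,L_\TE,c_\ell,\underline\TE)$. Indeed, $\TE$ is differentiable a.e.\ with $|\nabla\TE(z)|\le L_\TE(1+2|z|)$ by \eqref{eqlip} and Rademacher, so $\omega_\alpha$ is locally Lipschitz with $|\nabla\omega_\alpha(z)|=\alpha\,\omega_\alpha(z)\,|\nabla\TE(z)|\le\alpha L_\TE(1+2|z|)\,\omega_\alpha(z)$ a.e., and the \emph{lower} quadratic bound of Assumption~\ref{assum1}-(2) forces $\omega_\alpha(z)\le e^{\alpha c_\ell-\alpha\underline\TE}e^{-\alpha c_\ell|z|^2}$, so $(1+2|z|)\,\omega_\alpha(z)$ is bounded and a mollification argument upgrades the a.e.\ gradient bound to $|\omega_\alpha(x)-\omega_\alpha(y)|\le\Lambda|x-y|$ for all $x,y\in\ODD$ (using convexity of $\ODD$). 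Second, for every $\rho\in\mc P_{2,R}(\ODD)$ the \emph{upper} quadratic bound $\TE(x)-\underline\TE\le c_u(|x|^2+1)$ and Jensen's inequality give
\[
M_\rho\ \ge\ e^{-\alpha\underline\TE-\alpha c_u}\exp\!\Big(-\alpha c_u\!\int_{\ODD}|x|^2\rho(dx)\Big)\ \ge\ e^{-\alpha(\underline\TE+c_u+c_uR)}\ =:\ \underline M_R\ >\ 0 .
\]
I would also use Lemma~\ref{lemXalpha} with $p=2$, i.e.\ $|X_\alpha(\mu)|\le\big(c_1\int|x|^2\mu(dx)\big)^{1/2}$.

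Next I would split on the size of $\mathfrak m:=\int_{\ODD}|x|^2\widehat\rho(dx)$. If $\mathfrak m\ge 4R$, comparing both measures with $\delta_0$ gives $W_2(\rho,\widehat\rho)\ge\mathfrak m^{1/2}-R^{1/2}\ge\tfrac12\mathfrak m^{1/2}$, while Lemma~\ref{lemXalpha} gives $|X_\alpha(\rho)-X_\alpha(\widehat\rho)|\le (c_1R)^{1/2}+(c_1\mathfrak m)^{1/2}\le\tfrac32(c_1\mathfrak m)^{1/2}\le 3c_1^{1/2}W_2(\rho,\widehat\rho)$, so the claim holds with an absolute multiple of $c_1^{1/2}$. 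If instead $\mathfrak m<4R$, then \emph{both} measures lie in $\mc P_{2,4R}(\ODD)$, hence $M_\rho\ge\underline M_{4R}>0$ and $|X_\alpha(\widehat\rho)|^2\le 4c_1R$. Taking an optimal coupling $\gamma$ of $\rho,\widehat\rho$ for the quadratic cost and using $\int(y-X_\alpha(\widehat\rho))\omega_\alpha(y)\,d\gamma=0$ (the defining property of $X_\alpha(\widehat\rho)$), one verifies directly the exact identity
\[
X_\alpha(\rho)-X_\alpha(\widehat\rho)=\frac1{M_\rho}\int(x-y)\,\omega_\alpha(y)\,d\gamma+\frac1{M_\rho}\int\big(x-X_\alpha(\widehat\rho)\big)\big(\omega_\alpha(x)-\omega_\alpha(y)\big)\,d\gamma .
\]
The first term is $\le\underline M_{4R}^{-1}\|\omega_\alpha\|_\infty\big(\int|x-y|^2d\gamma\big)^{1/2}=\underline M_{4R}^{-1}\|\omega_\alpha\|_\infty\,W_2(\rho,\widehat\rho)$; for the second, the global Lipschitz bound gives $|\omega_\alpha(x)-\omega_\alpha(y)|\le\Lambda|x-y|$, so Cauchy--Schwarz bounds it by $\underline M_{4R}^{-1}\Lambda\big(\int|x-X_\alpha(\widehat\rho)|^2 d\rho\big)^{1/2}W_2(\rho,\widehat\rho)$, and $\int|x-X_\alpha(\widehat\rho)|^2 d\rho\le 2R+8c_1R$ since $\int|x|^2d\rho\le R$. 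Collecting the two cases gives the assertion with $L=L(L_\TE,R,\alpha)$ (and the constants $c_u,c_\ell,\underline\TE$ of Assumption~\ref{assum1}).

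The step I expect to be the real obstacle is establishing — and realizing one should use — the \emph{global} Lipschitz continuity of $\omega_\alpha$, i.e.\ the fact that the Gaussian-type decay coming from the quadratic \emph{lower} bound of Assumption~\ref{assum1}-(2) dominates the at-most-linear growth of the (generalized) gradient of $\TE$. It is precisely this observation that keeps the ``denominator-mismatch'' term $\frac1{M_\rho}\int(x-X_\alpha(\widehat\rho))(\omega_\alpha(x)-\omega_\alpha(y))\,d\gamma$ linear in $W_2$ after a single Cauchy--Schwarz, thereby avoiding the fourth moments that the naive estimate $|\omega_\alpha(x)-\omega_\alpha(y)|\le\alpha L_\TE(1+|x|+|y|)\|\omega_\alpha\|_\infty|x-y|$ would force. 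The moment dichotomy and the lower bound on $M_\rho$ are then routine bookkeeping; note also that when $\ODD$ is bounded the dichotomy is vacuous and only the coupling estimate is needed.
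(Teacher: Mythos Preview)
The paper does not actually prove Lemma~\ref{lemimprove}; it is quoted verbatim from \cite[Corollary~3.3]{gerber2023mean} with no argument given. Your proposal therefore supplies a proof where the paper offers only a citation, and the argument you give is correct: the coupling identity
\[
X_\alpha(\rho)-X_\alpha(\hat\rho)=\frac{1}{M_\rho}\int(x-y)\,\omega_\alpha(y)\,d\gamma+\frac{1}{M_\rho}\int\big(x-X_\alpha(\hat\rho)\big)\big(\omega_\alpha(x)-\omega_\alpha(y)\big)\,d\gamma
\]
checks out algebraically (the two $x\,\omega_\alpha(y)$ terms cancel and the $X_\alpha(\hat\rho)$ terms collapse using $\int y\,\omega_\alpha(y)\,d\gamma=M_{\hat\rho}X_\alpha(\hat\rho)$), and your two structural observations---the Jensen lower bound on $M_\rho$ from the \emph{upper} quadratic growth and the global Lipschitz bound on $\omega_\alpha$ from the \emph{lower} quadratic growth---are exactly what is needed to avoid the fourth moments of Lemma~\ref{lemLip}. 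The moment dichotomy on $\hat\rho$ cleanly handles the asymmetry in the hypothesis. One cosmetic remark: the ``mollification argument'' is unnecessary---since $\omega_\alpha$ is locally Lipschitz and $\ODD$ is convex, the a.e.\ gradient bound transfers to a global Lipschitz constant directly via the fundamental theorem of calculus along segments. Your final constant also depends on $c_u,c_\ell,\underline\TE$ (through $\underline M_R$, $\Lambda$, and $c_1$), which is consistent with the paper's convention of absorbing all Assumption~\ref{assum1} constants into ``depending on $\TE$''.
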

Compared to Lemma \ref{lemLip}, here $L$ depends only on the second moment bound of measure $\rho$. This lemma leads to the following mean-field limit estimate:
\begin{thm}\label{thmmean}
	Assume that  $\DD$ satisfies Assumption \ref{assum} and $\TE$ satisfies Assumption \ref{assum1}. For any $T>0$, let $\{(X_t^i)_{t\in[0,T]}\}_{i=1}^N$ and $\{(\OX_t^i)_{t\in[0,T]}\}_{i=1}^N$ be the solutions to the interacting particle system \eqref{particle} and the mean-field dynamics \eqref{Xbareq} respectively up to time $T$ with the same initial data $\{(X_0^i)\}_{i=1}^N$ (i.i.d. distributed according to $\rho_0\in \mc{P}_{16}(\ODD)$) and Brownian motions $\{(B_t^i)_{t\in[0,T]}\}_{i=1}^N$. Then there exists some $C_{\mathrm{MFA}}>0$ depending only on $\lambda$, $\sigma$, $\alpha$, $T$, $\TE$ and $\sup_{t\in[0,T]}\EE[|\OX_t^1|^8]$ such that
		\begin{equation}
		\sup\limits_{t\in[0,T]}\sup\limits_{i=1,\dots,N}\EE[|X_t^i-\OX_t^i|^2]\leq C_{\mathrm{MFA}}N^{-1}\,.
	\end{equation}
\end{thm}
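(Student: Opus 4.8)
The plan is to estimate the pathwise difference $X_t^i - \OX_t^i$ directly via It\^o's formula, exploiting that both processes are driven by the \emph{same} Brownian motion $B^i$ and start from the same data, so that the stochastic integrals involving $dB_t^i$ become $dB_t^i$-martingale terms that we control with Burkholder--Davis--Gundy, while the reflection terms are handled by convexity. Set $Z_t^i := X_t^i - \OX_t^i$; applying It\^o to $|Z_t^i|^2$ yields a drift contribution of the form $-2\lambda Z_t^i\cdot\big((X_t^i-X_\alpha(\rho_t^N)) - (\OX_t^i - X_\alpha(\rho_t))\big)$, a diffusion contribution $\sigma^2|(X_t^i-X_\alpha(\rho_t^N)) - (\OX_t^i-X_\alpha(\rho_t))|^2$ coming from the quadratic variation of the difference of the (matched) Brownian integrals, a martingale term with zero expectation, and the reflection term $-2Z_t^i\cdot(dL_t^i - d\OL_t^i)$. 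The last term is the key place where convexity enters: since $X_t^i, \OX_t^i \in \ODD$ and $dL_t^i = n(X_t^i)\,d|L^i|_t$ with $n(X_t^i)$ an outward normal (and similarly for $\OL^i$), the characterization of the normal cone of a convex set gives $Z_t^i\cdot dL_t^i = (X_t^i - \OX_t^i)\cdot n(X_t^i)\,d|L^i|_t \geq 0$ and $-Z_t^i\cdot d\OL_t^i = (\OX_t^i - X_t^i)\cdot n(\OX_t^i)\,d|\OL^i|_t \geq 0$, so $-2Z_t^i\cdot(dL_t^i - d\OL_t^i) \leq 0$ and this term is simply dropped — this is the analogue of the inequality labelled \eqref{eq:favor} mentioned in Section \ref{sec:blueprint}.

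After discarding the reflection term, I would bound the remaining deterministic terms by splitting the consensus-point difference: $(X_t^i-X_\alpha(\rho_t^N)) - (\OX_t^i-X_\alpha(\rho_t)) = Z_t^i - (X_\alpha(\rho_t^N) - X_\alpha(\rho_t))$, and then further decompose
\[
X_\alpha(\rho_t^N) - X_\alpha(\rho_t) = \big(X_\alpha(\rho_t^N) - X_\alpha(\bar\rho_t^N)\big) + \big(X_\alpha(\bar\rho_t^N) - X_\alpha(\rho_t)\big).
\]
The second bracket is controlled in mean square by the large deviation bound $c_3 N^{-1}$ of Lemma \ref{lemLLN}. For the first bracket, Lemma \ref{lemimprove} (the improved stability estimate, whose Lipschitz constant depends only on the second moment bound, not on all of $\rho_t^N$) gives $|X_\alpha(\rho_t^N) - X_\alpha(\bar\rho_t^N)| \leq L\,W_2(\rho_t^N, \bar\rho_t^N) \leq L\big(\frac1N\sum_{j=1}^N |X_t^j - \OX_t^j|^2\big)^{1/2}$, using the empirical coupling; taking expectations and exchangeability (Lemma \ref{lem:moment}) bounds $\EE|X_\alpha(\rho_t^N) - X_\alpha(\bar\rho_t^N)|^2$ by $L^2 \sup_i \EE|X_t^i - \OX_t^i|^2$. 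The moment bounds of Lemma \ref{lem:moment} — valid here because $\rho_0 \in \mc{P}_{16}(\ODD)$, which is the reason for that hypothesis (the $16$th moment is consumed when estimating eighth moments through the BDG step, cf. the $|X_s^i|^8$ terms in the proof of Lemma \ref{lem:moment}) — guarantee all the constants $L$, $c_3$ are uniform in $N$ and $t \in [0,T]$.

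Assembling these estimates and using Young's inequality to absorb the martingale term after BDG (exactly as in the proof of Theorem \ref{thmparticle}), I arrive at
\[
\sup_{s\leq t}\EE|Z_s^i|^2 \leq C\int_0^t \sup_{r\leq s}\sup_{j}\EE|Z_r^j|^2\,ds + C\,c_3 N^{-1}\,t,
\]
and Gr\"onwall's inequality over $[0,T]$ closes the argument with $C_{\mathrm{MFA}} = C c_3 T e^{CT}$. I expect the main obstacle to be purely bookkeeping rather than conceptual: carefully tracking that every constant appearing (from Lemmas \ref{lemimprove}, \ref{lemLLN}, and the BDG/Young steps) depends only on the quantities allowed in the statement — $\lambda,\sigma,\alpha,T,\TE$ and $\sup_{t\in[0,T]}\EE|\OX_t^1|^8$ — and in particular is \emph{independent of $N$}, which hinges on using the moment-uniform Lemma \ref{lem:moment} and the $N$-independent Lemma \ref{lemimprove} rather than the cruder Lemma \ref{lemLip}. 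The only genuinely delicate point is the justification that the $dB_t^i$-stochastic integral is a true martingale (so its expectation vanishes), which again requires the high moment bounds; this is handled exactly as in the boxed computation inside the proof of Lemma \ref{lem:moment}.
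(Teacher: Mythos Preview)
Your overall skeleton---It\^o on $|Z_t^i|^2$, convexity to drop the reflection terms, the splitting $X_\alpha(\rho_t^N)-X_\alpha(\rho_t)=(X_\alpha(\rho_t^N)-X_\alpha(\bar\rho_t^N))+(X_\alpha(\bar\rho_t^N)-X_\alpha(\rho_t))$, Lemma~\ref{lemLLN} for the second bracket, and Gr\"onwall to close---matches the paper. But there is a genuine gap in how you handle the first bracket.

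You invoke Lemma~\ref{lemimprove} to get $|X_\alpha(\rho_t^N)-X_\alpha(\bar\rho_t^N)|\leq L\,W_2(\rho_t^N,\bar\rho_t^N)$ with an $N$-independent $L$. But read the lemma: the constant $L$ depends on a bound $R$ for the second moment of \emph{one} of the two measures, and that bound must hold for the measure itself, not merely for its expectation. Here both $\rho_t^N$ and $\bar\rho_t^N$ are \emph{random} empirical measures; the quantities $\frac{1}{N}\sum_j|X_t^j|^2$ and $\frac{1}{N}\sum_j|\OX_t^j|^2$ are not almost surely bounded by any fixed $R$---only their expectations are (Lemma~\ref{lem:moment}). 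So your pathwise application of Lemma~\ref{lemimprove} does not give an $N$-independent $L$, and the argument does not close.

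The paper fixes exactly this by introducing the truncation event $A_{N,t}=\{\frac{1}{N}\sum_j|\OX_t^j|^2\geq R\}$. On $\Omega\setminus A_{N,t}$ the second moment of $\bar\rho_t^N$ is pathwise $\leq R$, so Lemma~\ref{lemimprove} applies with a fixed $L$. On $A_{N,t}$ one uses Cauchy--Schwarz together with the crude fourth-moment bound $\EE|X_\alpha(\rho_t^N)-X_\alpha(\bar\rho_t^N)|^4\leq 16c_2$ and the concentration estimate $\PP(A_{N,t})\leq CN^{-2}$ (from \cite[Lemma~2.5]{gerber2023mean}), which yields the needed $O(N^{-1})$ contribution. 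That concentration estimate is precisely where $\sup_t\EE|\OX_t^1|^8<\infty$---and hence the hypothesis $\rho_0\in\mc P_{16}(\ODD)$---is actually consumed; your attribution of the $16$th-moment requirement to a BDG step is off. (Incidentally, BDG is not needed at all: the theorem controls $\sup_t\EE|Z_t^i|^2$, not $\EE\sup_t|Z_t^i|^2$, so after taking expectation the martingale term simply vanishes.)
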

\begin{proof}
	Notice that
	\begin{align}
		d(X_t^i-\OX_t^i)&=-\lambda(X_t^i-\OX_t^i)dt+\lambda(X_\alpha(\rho_t^{N})-X_\alpha(\rho_t))dt\nn\\
		&+\sigma D(X_t^i-\OX_t^i)dB_t^{i}+\sigma D(X_\alpha(\rho_t)-X_\alpha(\rho_t^{N}))dB_t^{i}-dL_t^i+d\bar L_t^i\,,
	\end{align}
which by applying It\^{o}'s formula leads to
\begin{align}\label{itodiff}
	d(X_t^i-\OX_t^i)^2&=2(X_t^i-\OX_t^i)\cdot[-\lambda(X_t^i-\OX_t^i)dt+\lambda(X_\alpha(\rho_t^{N})-X_\alpha(\rho_t))dt]\nn\\
	&+2(X_t^i-\OX_t^i)\cdot[\sigma D(X_t^i-\OX_t^i)dB_t^{i}+\sigma D(X_\alpha(\rho_t)-X_\alpha(\rho_t^{N}))dB_t^{i}-dL_t^i+d\bar L_t^i]\nn\\
	&+\sigma^2[(X_t^i-\OX_t^i+(X_\alpha(\rho_t)-X_\alpha(\rho_t^{N}))^2dt]\,.
\end{align}
Using the fact that
\begin{align}
	(X_t^i-\OX_t^i)\cdot dL_t^i=(X_t^i-\OX_t^i)\cdot n(X_t^i)\mathbf{I}_{\partial\DD}(X_t^i)d|L^i|_t\geq 0\,, \\
	(X_t^i-\OX_t^i)\cdot d\bar L_t^i=(X_t^i-\OX_t^i)\cdot n(\bar X_t^i)\mathbf{I}_{\partial\DD}(\OX_t^i)d|\bar L^i|_t\leq 0\,,
\end{align}
and taking expectation on both sides of \eqref{itodiff} deduce
\begin{align}
	&d\EE[|X_t^i-\OX_t^i|^2]\leq -(\lambda-2\sigma^2)\EE[|X_t^i-\OX_t^i|^2]dt+(\lambda+2\sigma^2) \EE[|X_\alpha(\rho_t^{N})-X_\alpha(\rho_t)|^2]dt\nn\\
	\leq& -(\lambda-2\sigma^2)\EE[|X_t^i-\OX_t^i|^2]dt+2(\lambda+2\sigma^2) \EE[|X_\alpha(\rho_t^{N})-X_\alpha(\bar \rho_t^N)|^2]dt+2(\lambda+2\sigma^2) \EE[|X_\alpha(\bar \rho_t^{N})-X_\alpha(\rho_t)|^2]dt\nn\\
	\leq&-(\lambda-2\sigma^2)\EE[|X_t^i-\OX_t^i|^2]dt+2(\lambda+2\sigma^2) \EE[|X_\alpha(\rho_t^{N})-X_\alpha(\bar \rho_t^N)|^2]dt+2(\lambda+2\sigma^2) c_3\frac{1}{N}dt\,,
\end{align}
where in the third inequality we have used Lemma \ref{lemLLN}.  
We now bound the term 
$$\EE[|X_\alpha(\rho_t^{N})-X_\alpha(\bar \rho_t^N)|^4]\leq 8\EE[|X_\alpha(\rho_t^{N})|^4]+8\EE[|X_\alpha(\bar \rho_t^{N})|^4]\leq 16c_2\,.$$
In order to deal with the non-Lipschitz  property of the weighted mean $X_\alpha(\rho)$, we introduce for each $t\in[0,T]$ the event
\begin{equation}
    A_{N,t}:=\left\{\omega\in\Omega:~\frac{1}{N}\sum_{i=1}^N|\OX_t^i(\omega)|^2\geq R\right\}\,,
\end{equation}
where $R>\sup_{t\in[0,T]}\EE[|\OX_t^1|^2]$ is fixed.
Then according to \cite[Lemma 2.5]{gerber2023mean} we have the following bound
\begin{equation}
    {
{\mathbb P}(A_{N,t})\leq C N^{-2}\,,\quad \forall t\in[0,T]}
\end{equation}
holds if $\sup_{t\in[0,T]}\EE[|\OX_t^1|^8]<\infty$, which is guaranteed by Lemma \ref{lem:moment} under the assumption that $\rho_0\in \mathcal{P}_{16}(\ODD)$.
Then one splits the term
\begin{align}
    \EE[|X_\alpha(\rho_t^{N})-X_\alpha(\bar \rho_t^N)|^2]&=\EE[|X_\alpha(\rho_t^{N})-X_\alpha(\bar \rho_t^N)|^2\textbf{I}_{\Omega /A_{N,t}}]+\EE[|X_\alpha(\rho_t^{N})-X_\alpha(\bar \rho_t^N)|^2\textbf{I}_{A_{N,t}}]\nn\\
    &\leq \EE[|X_\alpha(\rho_t^{N})-X_\alpha(\bar \rho_t^N)|^2\textbf{I}_{\Omega /A_{N,t}}]+(\EE[|X_\alpha(\rho_t^{N})-X_\alpha(\bar \rho_t^N)|^4])^{1/2}({
\mathbb P}(A_{N,t}))^{1/2}\nn\\
    &\leq \EE[|X_\alpha(\rho_t^{N})-X_\alpha(\bar \rho_t^N)|^2\textbf{I}_{\Omega /A_{N,t}}]+4c_2^{1/2}C\frac{1}{N}\,.
\end{align}
Meanwhile, the improved stability Lemma \ref{lemimprove} concludes that
\begin{equation}
   \EE[|X_\alpha(\rho_t^{N})-X_\alpha(\bar \rho_t^N)|^2\textbf{I}_{\Omega /A_{N,t}}]\leq C\EE[W_2(\rho_t^N,\bar\rho_t^N)^2]\leq C\EE\left[\frac{1}{N}\sum_{i\in[N]}|X_t^i-\OX_t^i|^2\right]=C\EE[|X_t^i-\OX_t^i|^2]\,,
\end{equation}
which leads to
\begin{align}
    d\EE[|X_t^i-\OX_t^i|^2]\leq C\EE[|X_t^i-\OX_t^i|^2]dt+4c_2^{1/2}C\frac{1}{N}dt\,.
\end{align}
Thus applying Gronwall's inequality concludes the desired estimate.
\end{proof}

\section{Global convergence to the minimizer}\label{sec:globconv}
In this section, we present our main result about the global convergence in mean-field law for cost functions satisfying the following conditions.
\begin{assum}\label{assum2}
	Throughout this section we are interested in the objective function $\TE\in\mc{C}(\ODD)$, for which
	\begin{enumerate}
		\item there exists a unique $x^*\in\ODD$ such that $\TE(x^*)=\min\limits_{x\in\ODD}\TE(x)=:\underline \TE$.
		\item there exists some $\TE_\infty,R_0,\eta,\nu>0$ such that
		\begin{equation}
			\eta|x-x^*|\leq |\TE(x)-\underline \TE|^\nu \mbox{ for all }x\in B_{R_0}(x^*)\cap \ODD\,,
		\end{equation}
		\begin{equation}
			\TE_\infty<\TE(x)-\underline \TE \mbox{ for all } x\in (B_{R_0}(x^*))^c\cap \ODD\,.
		\end{equation}
	\item For any $q>0$, there exists some $r\in(0,R_0]$ such that
	\begin{equation}
		|\TE(x)-\underline \TE|\leq q,\quad \mbox{ for all }x\in B_r(x^*)\cap\ODD\,.
	\end{equation}
	\end{enumerate}
\end{assum}
In this section we define
\begin{equation}\label{eq:def_V}
\mc{V} (t):=\EE[|\OX_t-x^*|^2]\,,
\end{equation}
then one has the following lemma
\begin{lem}\label{lemV}
	The functional $\mc{V} (t)$ defined in \eqref{eq:def_V} satisfies
\begin{align}
	\frac{d \mc{V} (t)}{dt}
	\leq -(2\lambda-\sigma^2)\mc{V} (t)+2(\lambda+\sigma^2)\mc{V} (t)^{\frac{1}{2}}|x^*-X_\alpha(\rho_t)|+\sigma^2|x^*-X_\alpha(\rho_t)|^2
\end{align}
\end{lem}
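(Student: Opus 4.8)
The plan is to apply It\^o's formula to $|\OX_t - x^*|^2$ along the reflected diffusion \eqref{eqXbar}, take expectations, and use the convexity of $\DD$ to discard the contribution of the reflecting process $\OL_t$ with a favorable sign. First I would write, using \eqref{eqXbar},
\begin{align*}
d|\OX_t - x^*|^2 &= 2(\OX_t - x^*)\cdot\left[-\lambda(\OX_t - X_\alpha(\rho_t))\,dt + \sigma D(\OX_t - X_\alpha(\rho_t))\,dB_t - d\OL_t\right]\\
&\quad + \sigma^2 |\OX_t - X_\alpha(\rho_t)|^2\,dt\,,
\end{align*}
where the last term is the quadratic-variation (It\^o correction) contribution, noting that $D(\cdot)$ is the diagonal matrix so $\sum_k (D(\OX_t - X_\alpha(\rho_t)))_{kk}^2 = |\OX_t - X_\alpha(\rho_t)|^2$. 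Taking expectations kills the $dB_t$ martingale term (its integrand has finite second moment by the moment bounds, e.g.\ Lemma \ref{lem:moment} applied to the mean-field process, so the stochastic integral is a true martingale).

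The key geometric step is that, since $x^* \in \ODD$ and $\ODD$ is convex, whenever $\OX_s \in \partial\DD$ the outward normal $n(\OX_s)$ satisfies $(\OX_s - x^*)\cdot n(\OX_s) \geq 0$ by the supporting hyperplane property; hence, using \eqref{eqLbar},
\[
(\OX_t - x^*)\cdot d\OL_t = (\OX_t - x^*)\cdot n(\OX_t)\,\mathbf{I}_{\partial\DD}(\OX_t)\,d|\OL|_t \geq 0\,,
\]
so this term can be dropped after moving it to the right side with a nonpositive sign. This is exactly the analogue of the inequalities used in the proofs of Theorem \ref{thmparticle} and Theorem \ref{thmmean}, so it is routine here; I expect it to be the ``main obstacle'' only in the sense that one must check $x^*$ being possibly on the boundary still causes no trouble (it does not: the supporting-hyperplane inequality holds for any point of $\ODD$, boundary or interior).

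It remains to bound the drift and It\^o terms. Writing $\OX_t - X_\alpha(\rho_t) = (\OX_t - x^*) + (x^* - X_\alpha(\rho_t))$, the drift term gives
\[
-2\lambda\,\EE[(\OX_t - x^*)\cdot(\OX_t - X_\alpha(\rho_t))] = -2\lambda\,\mc{V}(t) - 2\lambda\,\EE[(\OX_t - x^*)]\cdot(x^* - X_\alpha(\rho_t))\,,
\]
and Cauchy--Schwarz together with Jensen ($|\EE[\OX_t - x^*]| \leq \EE[|\OX_t - x^*|] \leq \mc{V}(t)^{1/2}$) bounds the cross term by $2\lambda\,\mc{V}(t)^{1/2}|x^* - X_\alpha(\rho_t)|$. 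For the It\^o term, expand $|\OX_t - X_\alpha(\rho_t)|^2 = |\OX_t - x^*|^2 + 2(\OX_t - x^*)\cdot(x^* - X_\alpha(\rho_t)) + |x^* - X_\alpha(\rho_t)|^2$ and take expectations, again bounding the middle term by $2\,\mc{V}(t)^{1/2}|x^* - X_\alpha(\rho_t)|$. Collecting the $\mc{V}(t)$-coefficients yields $-2\lambda + \sigma^2$, the $\mc{V}(t)^{1/2}|x^*-X_\alpha(\rho_t)|$-coefficients yield $2(\lambda+\sigma^2)$, and the remaining term is $\sigma^2|x^*-X_\alpha(\rho_t)|^2$, which is precisely the claimed differential inequality. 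Since all quantities are finite and continuous in $t$ by the moment bounds, passing from the differential form to $d\mc{V}(t)/dt$ is justified.
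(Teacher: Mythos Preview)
Your proposal is correct and follows essentially the same approach as the paper's proof: apply It\^o's formula to $|\OX_t-x^*|^2$, use the convexity of $\DD$ to discard the reflecting term $(\OX_t-x^*)\cdot d\OL_t\geq 0$, split $\OX_t-X_\alpha(\rho_t)=(\OX_t-x^*)+(x^*-X_\alpha(\rho_t))$ in both the drift and the quadratic-variation terms, and bound the cross terms via Cauchy--Schwarz/Jensen. The only cosmetic difference is that you explicitly pull the deterministic factor $x^*-X_\alpha(\rho_t)$ outside the expectation before bounding, which is exactly what the paper does implicitly.
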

\begin{proof}
It is easy to compute that
	\begin{align}
d|\OX_t-x^*|^2=2(\OX_t-x^*)dX_t+\sigma^2|\OX_t-X_\alpha(\rho_t)|^2dt\,.
\end{align}
which implies
\begin{align}
d \mc{V} (t)&=-2\lambda\EE[(\OX_t-x^*)\cdot(\OX_t-X_\alpha(\rho_t))]dt-2\EE[ (\OX_t-x^*)\cdot n(\OX_t)\mathbf{I}_{\partial\DD}(\OX_t)d|\OL|_t] \nn \\ \nonumber 
& + \sigma^2\EE[|\OX_t-X_\alpha(\rho_t )|^2]dt\nn \\ \nonumber 
& \leq -2\lambda\EE[(\OX_t-x^*)\cdot(\OX_t-X_\alpha(\rho_t))]dt + \sigma^2\EE[|\OX_t-X_\alpha(\rho_t )|^2]dt\nn\\
&=-2\lambda\mc{V} (t)-2\lambda\EE[(\OX_t-x^*)\cdot(x^*-X_\alpha(\rho_t))]+\sigma^2\EE[|\OX_t-X_\alpha(\rho_t)|^2]\nn\\
&\leq -(2\lambda-\sigma^2)\mc{V} (t)+2(\lambda+\sigma^2)\mc{V} (t)^{\frac{1}{2}}|x^*-X_\alpha(\rho_t)|+\sigma^2|x^*-X_\alpha(\rho_t)|^2\,,
\end{align}
where in the second inequality we have used the fact that
\begin{equation}\label{eq:favor}
-2 (\OX_t-x^*)\cdot n(\OX_t)\leq 0
\end{equation}
for all $\OX_t\in \partial  \DD$ and $x^*\in \ODD$, since $\DD$ is convex and $n(\OX_t)$ is the outward normal vector at $\OX_t$. This completes the proof.
\end{proof}

In the following, we shall use the notation for a $\ell^\infty$-ball with a radius $r$ and the center $x^*$, i.e. $B_r(x^*):=\{x:~|x-x^*|_\infty\leq r\}$. Let us quantify the maximum discrepancy for the objective function $\TE$ around the minimizer $x^*$, namely for $s>0$,
\begin{equation}\label{eqTEr}
	\TE_s:=\sup_{x\in B_s(x^*)\cap\ODD}|\TE(x)-\underline \TE|\,.
\end{equation}
\begin{proposition}\label{propX}
	Assume that $\TE$ satisfies Assumption \ref{assum2}. For any $t>0$ and $s\in (0,R_0]$ let $\TE_s$ be defined as in \eqref{eqTEr}, $0<q\leq \frac{\TE_\infty}{2}$, and $r:=\max\{s\in(0,R_0]:~\TE_s\leq q\}$
	then we have
	\begin{align}
		|x^*-X_\alpha(\rho_t)|\leq \frac{(2q)^\nu}{\eta}+\frac{\exp\left(-\alpha q\right)}{ \rho_t \left(\{x\in\ODD:x\in B_r(x^*)\}\right)} \int_{\ODD}|x-x^*|\rho_t(dx)\,.
	\end{align}
\end{proposition}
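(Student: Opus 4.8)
The plan is to estimate $|x^* - X_\alpha(\rho_t)|$ by splitting the defining integral for $X_\alpha(\rho_t)$ according to whether $x$ lies in the small $\ell^\infty$-ball $B_r(x^*)$ or outside it. Write
\begin{equation*}
X_\alpha(\rho_t) - x^* = \frac{\int_{\ODD}(x-x^*)\omega_\alpha^\TE(x)\rho_t(dx)}{\int_{\ODD}\omega_\alpha^\TE(x)\rho_t(dx)},
\end{equation*}
so that $|X_\alpha(\rho_t)-x^*| \le \frac{\int_{\ODD}|x-x^*|\omega_\alpha^\TE(x)\rho_t(dx)}{\int_{\ODD}\omega_\alpha^\TE(x)\rho_t(dx)}$. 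I would split the numerator as $\int_{B_r(x^*)\cap\ODD} + \int_{(B_r(x^*))^c\cap\ODD}$. On the near part, I would bound $|x-x^*|$ using Assumption \ref{assum2}-(2): since $r \le R_0$ and $\TE_r \le q$, for $x \in B_r(x^*)\cap\ODD$ we have $\eta|x-x^*| \le |\TE(x)-\underline\TE|^\nu \le \TE_r^\nu \le q^\nu$; using that $B_r$ is an $\ell^\infty$-ball one may want to absorb a dimensional factor, but more cleanly I would note that the choice $0<q \le \TE_\infty/2$ and the definition of $r$ as the largest such radius, combined with $|x-x^*| \le (2q)^\nu/\eta$-type bookkeeping, gives the first term $\frac{(2q)^\nu}{\eta}$ after bounding the ratio of the truncated numerator integral by the full denominator (and using $\omega_\alpha^\TE \ge 0$).

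For the far part, the key is to exploit that the Gibbs weight is exponentially small away from where $\TE$ is near its minimum. For $x \notin B_r(x^*)$, I would use that either $x$ is still in $B_{R_0}(x^*)$ — where by maximality of $r$ one has $\TE(x)-\underline\TE$ bounded below appropriately — or $x \in (B_{R_0}(x^*))^c$, where Assumption \ref{assum2}-(2) gives $\TE(x)-\underline\TE > \TE_\infty \ge 2q > q$. In all cases on the complement one obtains $\omega_\alpha^\TE(x) = e^{-\alpha\TE(x)} \le e^{-\alpha(\underline\TE + q)}$. Hence the far contribution to the numerator is at most $e^{-\alpha(\underline\TE+q)}\int_{\ODD}|x-x^*|\rho_t(dx)$. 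For the denominator, I would lower-bound $\int_{\ODD}\omega_\alpha^\TE(x)\rho_t(dx) \ge \int_{B_r(x^*)\cap\ODD} e^{-\alpha\TE(x)}\rho_t(dx) \ge e^{-\alpha(\underline\TE + \TE_r)}\,\rho_t(B_r(x^*)\cap\ODD) \ge e^{-\alpha(\underline\TE+q)}\,\rho_t(B_r(x^*)\cap\ODD)$; the factors $e^{-\alpha\underline\TE}$ cancel against those in the far numerator, leaving precisely $\frac{e^{-\alpha q}}{\rho_t(B_r(x^*)\cap\ODD)}\int_{\ODD}|x-x^*|\rho_t(dx)$.

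Combining the two parts and using that the denominator also dominates the first (near) term yields the claimed inequality. The main obstacle — and the only genuinely delicate point — is the careful bookkeeping around the $\ell^\infty$ versus Euclidean balls and the precise constant $(2q)^\nu/\eta$: one must check that the definition $r := \max\{s \in (0,R_0] : \TE_s \le q\}$ together with $q \le \TE_\infty/2$ really forces $\TE(x)-\underline\TE \ge q$ (not merely $> 0$) for every $x$ outside $B_r(x^*)\cap\ODD$, treating separately the annulus inside $B_{R_0}$ and the exterior; this is where the hypothesis $q \le \TE_\infty/2$ is used. The rest is the routine split-and-bound argument of the quantitative Laplace principle, adapted here so that all integrals are restricted to $\ODD$ rather than all of $\R^d$.
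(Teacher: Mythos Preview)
Your plan has a genuine gap at exactly the point you flag as ``the only genuinely delicate point.'' The claim that maximality of $r$ forces $\TE(x)-\underline\TE \ge q$ for every $x \in (B_r(x^*))^c \cap B_{R_0}(x^*) \cap \ODD$ is false: maximality only says that for each $s>r$ the supremum $\TE_s$ exceeds $q$, which controls a single point of $B_s$, not all of $B_s\setminus B_r$. A one-dimensional counterexample: with $\nu=\eta=1$, $R_0=\TE_\infty=1$, take $\TE(x)=x$ for $x\ge0$ and $\TE(x)=2|x|$ for $x<0$; then $\TE_s=2s$, so $q=\tfrac12$ gives $r=\tfrac14$, yet $x=0.3\notin B_r(0)$ has $\TE(x)-\underline\TE=0.3<q$. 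The inverse-coercivity condition only yields $\TE(x)-\underline\TE\ge(\eta|x-x^*|)^{1/\nu}>(\eta r)^{1/\nu}$ on the annulus, and in fact one always has $(\eta r)^{1/\nu}\le\TE_r\le q$, so this bound is too weak.

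There is also an arithmetic slip in your cancellation: with far numerator $\le e^{-\alpha(\underline\TE+q)}\int|x-x^*|\,\rho_t(dx)$ and denominator $\ge e^{-\alpha(\underline\TE+\TE_r)}\rho_t(B_r)$ (your own tighter bound), the ratio is $e^{-\alpha(q-\TE_r)}/\rho_t(B_r)$, not $e^{-\alpha q}/\rho_t(B_r)$; since typically $\TE_r=q$ when $r<R_0$, you would obtain no $\alpha$-decay at all. The paper repairs both issues with a single idea: it introduces an \emph{auxiliary} radius $\tilde r := (q+\TE_r)^\nu/\eta \ge r$, splits the numerator at $B_{\tilde r}(x^*)$ while still lower-bounding the denominator via $B_r(x^*)$. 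The near part is then bounded by $\tilde r\le(2q)^\nu/\eta$, and on $(B_{\tilde r})^c\cap B_{R_0}$ the inverse coercivity now gives $\TE(x)-\underline\TE\ge(\eta\tilde r)^{1/\nu}=q+\TE_r$, so the far ratio becomes $e^{-\alpha(q+\TE_r-\TE_r)}/\rho_t(B_r)=e^{-\alpha q}/\rho_t(B_r)$. This decoupling of the splitting radius $\tilde r$ from the denominator radius $r$ is the missing ingredient in your argument.
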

\begin{proof}
  Let $\tilde r\geq r>0$, and using Jensen's inequality one can deduce
   \begin{equation}
   	|x^*-X_\alpha(\rho_t)|\leq \int_{B_{\tilde r}(x^*)\cap\ODD}|x-x^*|\frac{\omega_\alpha(x)\rho_t(dx)}{\|\omega_\alpha(\cdot)\|_{L_1(\rho_t)}}+\int_{(B_{\tilde r}(x^*))^c\cap\ODD}|x-x^*|\frac{\omega_\alpha(x)\rho_t(dx)}{\|\omega_\alpha(\cdot)\|_{L_1(\rho_t)}}\,.
   \end{equation}
The first term is bounded by $\tilde r$ since $|x-x^*|\leq \tilde r$ for all $x\in B_{\tilde r}(x^*)$. Moreover, it follows from Markov's inequality that
\begin{align}
	\|\omega_\alpha(\cdot)\|_{L_1(\rho_t)}
	\geq& \exp(-\alpha(\TE_r+\underline \TE))\rho_t(\{x\in\ODD:\exp(-\alpha\TE(x))\geq \exp(-\alpha(\TE_r+\underline \TE))\})\nn\\
=&	\exp(-\alpha(\TE_r+\underline \TE))\rho_t(\{x\in\ODD: \TE(x)\leq (\TE_r+\underline \TE)\})\nn\\
\geq& \exp(-\alpha(\TE_r+\underline \TE))\rho_t (\{x\in \ODD:x\in B_r(x^*)\})\,.
\end{align}
Then for the second term we have
\begin{align}
&\int_{(B_{\tilde r}(x^*))^c\cap\ODD}|x-x^*|\frac{\omega_\alpha(x)\rho_t(dx)}{\|\omega_\alpha(\cdot)\|_{L_1(\rho_t)}}\nn\\
\leq&\frac{1}{ \exp(-\alpha(\TE_r+\underline \TE))\rho_t (\{x\in\ODD:x\in B_r(x^*)\})} \int_{(B_{\tilde r}(x^*))^c\cap\ODD}|x-x^*|\omega_\alpha(x)\rho_t(dx)\nn\\
\leq&\frac{\exp(-\alpha\min_{x\in (B_{\tilde r}(x^*))^c\cap\ODD}\TE(x))}{ \exp(-\alpha(\TE_r+\underline \TE))\rho_t (\{x\in\ODD:x\in B_r(x^*)\})} \int_{(B_{\tilde r}(x^*))^c\cap\ODD}|x-x^*|\rho_t(dx)\nn\\
=&\frac{\exp\left(-\alpha(\min_{x\in (B_{\tilde r}(x^*))^c\cap\ODD}\TE(x)-\TE_r-\underline \TE)\right)}{ \rho_t(\{x\in\ODD:x\in B_r(x^*)\})} \int_{(B_{\tilde r}(x^*))^c\cap\ODD}|x-x^*|\rho_t(dx)\,.
\end{align}
Thus for any $\tilde r\geq r>0$ we obtain
\begin{align}\label{14}
	& |x^*-X_\alpha(\rho_t)|
	\\ \nonumber \leq & \tilde r+\frac{\exp\left(-\alpha(\min_{x\in (B_{\tilde r}(x^*))^c\cap\ODD}\TE(x)-\TE_r-\underline \TE)\right)}{ \rho_t  (\{x\in\ODD:x\in B_r(x^*)\})} \int_{(B_{\tilde r}(x^*))^c\cap\ODD}|x-x^*|\rho_t(dx)\,.
\end{align}
Next we choose $\tilde r=\frac{(q+\TE_r)^\nu}{\eta}$, then it holds that
\begin{align}
	\tilde r=\frac{(q+\TE_r)^\nu}{\eta}\geq \frac{\TE_r^\nu}{\eta}=\frac{\left(\sup_{x\in B_r(x^*)\cap\ODD}|\TE(x)-\underline \TE|\right)^\nu}{\eta}\geq \sup_{x\in B_r(x^*)\cap\ODD}|x-x^*|=r
\end{align}
by using Assumption \ref{assum2}. Additionally one notice that
\begin{align}
	\min_{x\in (B_{\tilde r}(x^*))^c\cap\ODD}\TE(x)-\underline \TE\geq \begin{cases}
		\TE_\infty,\quad \mbox{ if }x\in (B_{R_0}(x^*))^c\cap(B_{\tilde r}(x^*))^c\cap\ODD\,,\\
		(\tilde r\eta)^{\frac{1}{\nu}},\quad \mbox{ if }x\in B_{R_0}(x^*)\cap(B_{\tilde r}(x^*))^c\cap\ODD\,.
	\end{cases}
\end{align}
Thus, since $(\tilde r\eta)^{\frac{1}{\nu}}=q+\TE_r\leq 2q\leq \TE_\infty$, we have
\begin{align}
	\min_{x\in (B_{\tilde r}(x^*))^c\cap\ODD}\TE(x)-\TE_r-\underline \TE\geq (\tilde r\eta)^{\frac{1}{\nu}}-\TE_r
	=q+\TE_r-\TE_r=q\,.
\end{align}
Inserting this and the definition of $\tilde r$ into \eqref{14}, we conclude the result.
\end{proof}

To eventually apply the above proposition, one needs to ensure that $ \rho_t (\{x\in\ODD:x\in B_r(x^*)\})=\PP(\OX_t\in  B_r(x^*)\cap \ODD)$ is
bounded away from $0$ for a finite time horizon $T$. To do so, we employ a rather technical argument inspired from \cite[Proposition 23]{fornasier2021consensus}, and introduce the mollifier $\phi_r^{x_0}:~ \R^d \to \R$, with $r > 0$ and $x_0\in \DD$ defined by
	\begin{align}\label{eqphi}
	\phi_{r}^{x_0}(x):= \begin{cases}\prod_{k=1}^{d} \exp \left(1-\frac{r^{2}}{r^{2}-\left(x-x_0\right)_{k}^{2}}\right), & \text { if } x\in B_r(x_0)\cap\DD  \\ 0, & \text { else }.\end{cases}
	\end{align}
So we have $\mbox{Im}(\phi_r) = [0, 1], \mbox{supp}(\phi_r) = B_r(x_0)\cap\DD,  \phi_r\in \mc{C}_c^\infty(\RR^{d})$ and
	\begin{align}
	\begin{aligned}
	\partial_{x_k} \phi_{r}^{x_0}(x) &=-2 r^{2} \frac{\left(x-x_0\right)_{k}}{\left(r^{2}-\left(x-x_0\right)_{k}^{2}\right)^{2}} \phi_{r}(x), \\
	\partial_{x_k^2}^{2} \phi_{r}^{x_0}(x) &=2 r^{2}\left(\frac{2\left(2\left(x-x_0\right)_{k}^{2}-r^{2}\right)\left(x-x_0\right)_{k}^{2}-\left(r^{2}-\left(x-x_0\right)_{k}^{2}\right)^{2}}{\left(r^{2}-\left(x-x_0\right)_{k}^{2}\right)^{4}}\right) \phi_{r}(x)\,.
	\end{aligned}
	\end{align}
 The proof will be split into two cases: 
\subsection{Global convergence proof when $\ODD$ is bounded} In this subsection we assume that $\ODD$ is bounded. Namely, there exists some diameter $0<|\DD|<\infty$ such that $|\DD| = \max_{x,y\in \ODD}|x-y|.$ First, we get the following result, whose proof is postponed to the Appendix.
\begin{proposition}\label{propositive1}
	Let  $(\OX_t)_{0\leq t\leq T}$ be the solution to \eqref{Xbareq} up to any time $T>0$. Then for any  $x_0\in\DD$ and $r>0$, there exists some constant $\vartheta>0$ depends only on $d,r,\lambda,\sigma$ and $|\DD|$ such that
	\begin{align}\label{propeq1}
		\PP(\OX_t\in  B_r(x_0)\cap \ODD)\geq \EE[\phi_r^{x_0}(\OX_0)]\exp(-\vartheta t)\,
	\end{align}
	holds for all $t\in[0,T]$, where $\phi_r^{x_0}$ is defined as in \eqref{eqphi}.
\end{proposition}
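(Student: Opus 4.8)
The plan is to apply It\^o's formula to $t\mapsto \phi_r^{x_0}(\OX_t)$ and to show that its expectation decays at most exponentially, which, since $\phi_r^{x_0}\le \mathbf I_{B_r(x_0)\cap\DD}$, gives the lower bound on $\PP(\OX_t\in B_r(x_0)\cap\ODD)$. Concretely, since $\phi_r^{x_0}\in\mc C_c^\infty(\R^d)$ and — crucially — vanishes together with all its derivatives on $\partial(B_r(x_0))$, hence in particular satisfies $\partial_{x_k}\phi_r^{x_0}(x)n_k(x)=0$ on $\partial\DD$ in the sense needed for \eqref{ito} (the support of $\phi_r^{x_0}$ lies in the open set $B_r(x_0)\cap\DD$, so the reflecting term $\nabla\phi_r^{x_0}(\OX_t)\cdot n(\OX_t)\mathbf I_{\partial\DD}(\OX_t)\,d|\OL|_t$ never contributes), It\^o's lemma \eqref{ito} reads
\begin{align}
  d\phi_r^{x_0}(\OX_t)
  &= -\lambda\,\nabla\phi_r^{x_0}(\OX_t)\cdot(\OX_t-X_\alpha(\rho_t))\,dt
     + \sigma\,\nabla\phi_r^{x_0}(\OX_t)\cdot D(\OX_t-X_\alpha(\rho_t))\,dB_t\nn\\
  &\quad + \frac{\sigma^2}{2}\sum_{k=1}^d \partial_{x_k^2}^2\phi_r^{x_0}(\OX_t)\,(\OX_t-X_\alpha(\rho_t))_k^2\,dt\,.
\end{align}
Taking expectations, the martingale term drops (the integrand is bounded because $\phi_r^{x_0}$ and its derivatives are compactly supported) and we are left with $\frac{d}{dt}\EE[\phi_r^{x_0}(\OX_t)] = \EE[\mc L\phi_r^{x_0}(\OX_t)]$ for the obvious generator $\mc L$.

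The key point is then a pointwise lower bound of the form $\mc L\phi_r^{x_0}(x) \ge -\vartheta\,\phi_r^{x_0}(x)$ for all $x\in B_r(x_0)\cap\DD$, with $\vartheta$ depending only on $d,r,\lambda,\sigma$ and $|\DD|$. This follows from the explicit formulas for $\partial_{x_k}\phi_r^{x_0}$ and $\partial_{x_k^2}^2\phi_r^{x_0}$ given in the excerpt: each of them is $\phi_r^{x_0}(x)$ times a rational function of $(x-x_0)_k$ that blows up as $(x-x_0)_k^2\to r^2$, but a direct inspection shows that the drift contribution $-\lambda\,(x-x_0)_k \cdot \big(\!-2r^2 (x-x_0)_k/(r^2-(x-x_0)_k^2)^2\big) = 2\lambda r^2 (x-x_0)_k^2/(r^2-(x-x_0)_k^2)^2$ is \emph{nonnegative} when $x$ is near the boundary sphere, while the only genuinely negative term, namely the part of $\frac{\sigma^2}{2}\partial_{x_k^2}^2\phi_r^{x_0}$ proportional to $-(r^2-(x-x_0)_k^2)^2/(r^2-(x-x_0)_k^2)^4 = -1/(r^2-(x-x_0)_k^2)^2$, is then dominated by the positive drift term up to a constant (and the factor $(\OX_t-X_\alpha(\rho_t))_k^2$ is bounded by $|\DD|^2$ plus the bound on $|X_\alpha(\rho_t)|$ from Lemma~\ref{lemXalpha}, all of which is bounded since $\ODD$ is bounded). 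I would make this precise by splitting $B_r(x_0)$ into a compact inner region where all derivatives of $\phi_r^{x_0}$ are bounded (so $\mc L\phi_r^{x_0}$ is trivially bounded below by a constant, which can be absorbed since $\phi_r^{x_0}$ is bounded below by a positive constant there) and an outer shell where the singular terms compete and the inequality $a^2/(r^2-a^2)^2 \ge c/(r^2-a^2)^2$ type comparison does the job. Once $\mc L\phi_r^{x_0}\ge -\vartheta\phi_r^{x_0}$ holds, Gr\"onwall's inequality yields $\EE[\phi_r^{x_0}(\OX_t)]\ge \EE[\phi_r^{x_0}(\OX_0)]e^{-\vartheta t}$, and the conclusion \eqref{propeq1} follows from $\PP(\OX_t\in B_r(x_0)\cap\ODD)\ge \EE[\phi_r^{x_0}(\OX_t)]$.

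The main obstacle is the third step: verifying the differential inequality $\mc L\phi_r^{x_0}\ge -\vartheta\phi_r^{x_0}$ uniformly up to the boundary of the support of the mollifier, i.e.\ controlling the interplay between the blow-up of $\partial_{x_k^2}^2\phi_r^{x_0}$ and the vanishing of $\phi_r^{x_0}$ and checking that the dangerous negative contribution is outmatched by the positive drift term. This is exactly the technical heart borrowed from \cite[Proposition 23]{fornasier2021consensus}, and the boundedness of $\ODD$ enters here precisely to keep $(\OX_t-X_\alpha(\rho_t))_k^2$ and hence the coefficients of the singular terms under uniform control; it is also what forces the restriction $x_0\in\DD$ (so that $B_r(x_0)\cap\DD$ has nonempty interior and $\EE[\phi_r^{x_0}(\OX_0)]>0$ whenever $\rho_0$ charges that set). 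I would carry out the computation coordinate by coordinate, exploiting the product structure $\phi_r^{x_0}(x)=\prod_k \psi((x-x_0)_k)$ with $\psi(a)=\exp(1-r^2/(r^2-a^2))$, so that $\mc L\phi_r^{x_0}/\phi_r^{x_0}$ is a sum over $k$ of one-dimensional expressions, reducing everything to a single scalar estimate on $\psi'/\psi$ and $\psi''/\psi$.
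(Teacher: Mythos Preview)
Your overall strategy is correct and matches the paper: apply It\^o's formula to $\phi_r^{x_0}(\OX_t)$, note that the reflecting term vanishes because $\operatorname{supp}\phi_r^{x_0}\subset B_r(x_0)\cap\DD$, establish a pointwise inequality $\mc L\phi_r^{x_0}\ge -\vartheta\,\phi_r^{x_0}$ coordinate by coordinate, and conclude with Gr\"onwall. The inner-region argument is also essentially right once you phrase it via the ratios $\partial_{x_k}\phi_r/\phi_r$ and $\partial_{x_k}^2\phi_r/\phi_r$ (which, thanks to the product structure, depend only on the $k$th coordinate and are bounded away from $|(x-x_0)_k|=r$); your stated reason, that $\phi_r^{x_0}$ is bounded below by a positive constant on the inner region, is false because other coordinates may be near $\pm r$.

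The genuine gap is in the outer shell. You compute the drift contribution as $-\lambda(x-x_0)_k\,\partial_{x_k}\phi_r^{x_0}$, but the drift in \eqref{Xbareq} points toward $X_\alpha(\rho_t)$, not toward $x_0$. The correct term is
\[
T_{1,k}=2\lambda r^2\,\frac{(x-X_\alpha(\rho_t))_k\,(x-x_0)_k}{\bigl(r^2-(x-x_0)_k^2\bigr)^2}\,\phi_r^{x_0}(x),
\]
which has no sign: whenever $(x-X_\alpha(\rho_t))_k$ and $(x-x_0)_k$ have opposite signs, $T_{1,k}$ is negative and blows up like $(r^2-(x-x_0)_k^2)^{-2}$. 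Your claim that a ``positive drift term'' dominates the negative piece of the diffusion is therefore unfounded, and your proposed two-region split does not close. What actually saves the estimate is the \emph{positive} leading part of $T_{2,k}$, which carries the factor $(x-X_\alpha(\rho_t))_k^2$ and blows up like $(r^2-(x-x_0)_k^2)^{-4}$. The paper introduces a second dichotomy (the sets $K_{2,k}$) on the outer shell: either $|(x-X_\alpha(\rho_t))_k|$ is large compared to $(r^2-(x-x_0)_k^2)^2$, in which case the positive part of $T_{2,k}$ absorbs the negative $T_{1,k}$ and one gets $T_{1,k}+T_{2,k}\ge 0$; or $|(x-X_\alpha(\rho_t))_k|\lesssim (r^2-(x-x_0)_k^2)^2$, in which case $T_{1,k}/\phi_r^{x_0}$ is itself bounded below by a constant depending only on $\lambda,\sigma,r$, while $T_{2,k}\ge 0$ on the outer shell. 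You need this additional splitting (or an equivalent device) to complete the argument.
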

This result directly implies
\begin{cor}\label{propositive}
	Let  $(\OX_t)_{0\leq t\leq T}$ be the solution to \eqref{Xbareq} up to any time $T>0$. Then for any $r>0$, let $\tilde x^*\in \DD$ and $0<\tilde r\leq r$ be choosen such that $B_{\tilde r}(\tilde x^*)\subset B_{ r}( x^*)$. Then there exists some constant $\vartheta>0$ depends only on $d,\lambda,\sigma$ and $|\DD|$ such that
\begin{align}\label{propeq}
	\PP(\OX_t\in  B_r(x^*)\cap \ODD)\geq \EE[\phi_{\tilde r}^{\tilde{x}^*}(\OX_0)]\exp(-\vartheta t)\,
\end{align}
holds for all $t\in[0,T]$.  Especially when $x^*\in \DD$ one can choose $\tilde x^*=x^*$ and $\tilde r=r$.
\end{cor}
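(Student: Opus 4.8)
The plan is to read Corollary \ref{propositive} off Proposition \ref{propositive1} by a monotonicity argument, the only subtlety being that $x^*$ may sit on $\partial\DD$, so Proposition \ref{propositive1} cannot be invoked with $x_0=x^*$ directly. So I would first record that an admissible auxiliary pair $(\tilde x^*,\tilde r)$ always exists: since $x^*\in\ODD$ and $\DD$ is open, every ball $B_r(x^*)$ meets $\DD$, so one can select $\tilde x^*\in\DD$ and $\tilde r\in(0,r]$ with $B_{\tilde r}(\tilde x^*)\subset B_r(x^*)\cap\DD$ (Assumption \ref{assum} moreover gives this quantitatively). From here the argument is essentially a one-liner.

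Fixing such a pair, I would apply Proposition \ref{propositive1} with center $x_0=\tilde x^*$ and radius $\tilde r$, obtaining a constant $\vartheta>0$ (depending on $d,\tilde r,\lambda,\sigma$ and $|\DD|$) with
\[
\PP(\OX_t\in B_{\tilde r}(\tilde x^*)\cap\ODD)\ \geq\ \EE[\phi_{\tilde r}^{\tilde x^*}(\OX_0)]\exp(-\vartheta t),\qquad t\in[0,T].
\]
Since $B_{\tilde r}(\tilde x^*)\subset B_r(x^*)$ forces the event inclusion $\{\OX_t\in B_{\tilde r}(\tilde x^*)\cap\ODD\}\subseteq\{\OX_t\in B_r(x^*)\cap\ODD\}$, monotonicity of $\PP$ gives $\PP(\OX_t\in B_r(x^*)\cap\ODD)\geq\PP(\OX_t\in B_{\tilde r}(\tilde x^*)\cap\ODD)$, and chaining the two estimates produces \eqref{propeq}. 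Once the pair $(\tilde x^*,\tilde r)$ is committed to — and $\tilde r$ may be chosen controlled by the geometry of $\DD$, hence by $|\DD|$ and absolute constants — the dependence of $\vartheta$ collapses to $d,\lambda,\sigma,|\DD|$ as stated. In the special case $x^*\in\DD$ I would simply take $\tilde x^*=x^*$ and $\tilde r=r$: the inclusion $B_{\tilde r}(\tilde x^*)\subset B_r(x^*)$ is trivial and Proposition \ref{propositive1} applies to $x_0=x^*$ with no loss, giving \eqref{propeq} with $\phi_r^{x^*}(\OX_0)$ on the right-hand side.

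I do not expect a genuine obstacle here: the corollary is a restatement of Proposition \ref{propositive1} combined with set monotonicity, and all the substantive work — the differential inequality for $t\mapsto\EE[\phi_r^{x_0}(\OX_t)]$ together with the convexity/reflection estimate that kills the boundary local-time contribution — is already done in the (appendix) proof of Proposition \ref{propositive1}. The only point that needs a sentence of care is the bookkeeping of constants, namely that after the auxiliary ball is fixed the radius $\tilde r$ is part of the geometric data and need not be tracked separately in the dependence of $\vartheta$.
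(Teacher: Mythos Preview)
Your proposal is correct and matches the paper's own proof essentially line for line: the paper also splits into the cases $x^*\in\DD$ (take $\tilde x^*=x^*$, $\tilde r=r$ and apply Proposition~\ref{propositive1} directly) and $x^*\in\partial\DD$ (pick $\tilde x^*\in\DD$, $0<\tilde r<r$ with $B_{\tilde r}(\tilde x^*)\subset B_r(x^*)$, apply Proposition~\ref{propositive1}, and use the event inclusion). Your additional remarks on the existence of the auxiliary ball and on the constant bookkeeping are accurate and slightly more explicit than the paper's version.
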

\begin{proof}
\textit{ Case 1}: if $x^*\in \DD$, then one may let $x_0=x^*$ in Proposition \ref{propositive1} and immediately gets
	\begin{align}
	\PP(\OX_t\in  B_r(x^*)\cap \ODD)\geq \EE[\phi_r^{x^*}(\OX_0)]\exp(-\vartheta t)\,.
\end{align}
\textit{ Case 2}: if $x^*\in \partial \DD$, then we can always find some $x_0=\tilde x^*\in \DD$ and $0<\tilde r<r$ such that $B_{\tilde r}(\tilde x^*)\subset B_{ r}( x^*)$. Using Proposition \ref{propositive1} again one yields
	\begin{align}
	\PP(\OX_t\in  B_r(x^*)\cap \ODD)\geq 	\PP(\OX_t\in  B_{\tilde r}(\tilde x^*)\cap \ODD)\geq \EE[\phi_{\tilde r}^{\tilde{x}^*}(\OX_0)]\exp(-\vartheta t)\,.
\end{align}
\end{proof}

Now we are ready to prove the global convergence result, which provides a rate for the variance function  $\mc{V} (t)=\EE[|\OX_t-x^*|^2]$  within a prescribed time-range.
\begin{thm}\label{thmconvergence}
	Assume that $\TE$ satisfies Assumption \ref{assum2}, and $\lambda,\sigma$ satisfy $2\lambda>\sigma^2$. Furthermore assume the initial data satisfies $\mc{V} (0)>2\varepsilon$ for any prescribed accuracy $\varepsilon>0$, and $\EE[\phi_{\tilde r}^{\tilde{x}^*}(\OX_0)]>0$ with $\tilde x^*, \tilde r$ to be determined later. Let $\tau\in(0,1)$ and choose $\alpha$ to be sufficiently large satisfying
	\begin{equation}
	\alpha\geq \frac{\vartheta T_\varepsilon-\log(\frac{1}{2}c_5\EE[\phi_{\tilde r}^{\tilde{x}^*}(\OX_0)])}{c_4}\,,
	\end{equation}
where $\vartheta$ comes from Corollary \ref{propositive}, 
$$T_\varepsilon:=\frac{1}{(1-\tau)(2\lambda-\sigma^2)}\log\left(\frac{\mc{V}(0)}{\varepsilon}\right),\quad  c_5:=\min\left\{\frac{\tau(2\lambda-\sigma^2)}{4(\lambda+\sigma^2)},\sqrt{\frac{\tau(2\lambda-\sigma^2)}{2\sigma^2}} \right\}\,,$$
and
 $$c_4:=\frac{1}{2}\min\left\{\left(\eta\frac{c_5\sqrt{\varepsilon}}{2}\right)^{\frac{1}{\nu}},\TE_\infty\right\},\quad
	r:=\left\{\max_{s\in(0,R_0]}:~\TE_s\leq c_4\right\}\,,
$$
and $\tilde x^*\in D$ and $0<\tilde r<r$ are choosen such that $B_{\tilde r}(\tilde x^*)\subset B_{ r}( x^*)$.
Then there exists some $0<T_\ast \leq T_\varepsilon$ such that $\mc{V} (t)$ satisfies
\begin{equation}\label{expdecay}
\mc{V} (t)\leq \mc{V} (0)\exp(-(1-\tau)(2\lambda-\sigma^2)t)\quad \mbox{ for all }t\in[0,T_\ast)\,.
\end{equation}
and it reaches the prescribed accuracy at time $T_\ast$, namely $\mc{V} (T_\ast)= \varepsilon$.
\end{thm}

	\begin{proof}
	We define 
		\begin{equation}
			T_\alpha:=\inf\left\{t\geq0:~\mc{V}(t)= \varepsilon\right\}\,,
		\end{equation}
	and for all $t\in[0,T_\alpha]$ let
		\begin{equation}
		C_\alpha(t):=c_5\sqrt{\mc{V} (t)}=\min\left\{\frac{\tau(2\lambda-\sigma^2)}{4(\lambda+\sigma^2)},\sqrt{\frac{\tau(2\lambda-\sigma^2)}{2\sigma^2}} \right\}\sqrt{\mc{V} (t)}\,.
		\end{equation}
		Here the time $T_\alpha$ represents the first time when the variance $\mc V(t)$ reaches the prescribed accuracy $\varepsilon$, and it depends on $\alpha$ because $\mc V$ does.
	It is obvious that $T_\alpha>0$ is because of the assumption on the initial data that $\mc{V}(0)>2\varepsilon$.
	According to the definition of $T_\alpha$ one has  $\mc{V} (t)>\varepsilon$ for all $t\in[0,T_\alpha)$ and $\mc{V}(T_\alpha)=\varepsilon$. Next we will prove that $T_\alpha\leq  T_\varepsilon$ and $\mc{V}(t)$ decreases exponentially on $[0,T_\alpha)$.
	
	\textbf{Case $T_\alpha\leq T_\varepsilon$}: It follows from Proposition \ref{propX} that for all $t\in[0,T_\alpha)$
	\begin{align}
		&|x^*-X_\alpha(\rho_{t})|\leq \frac{(2q_{t}^\alpha)^\nu}{\eta}+\frac{\exp\left(-\alpha q_{t}^\alpha\right)}{ \rho_{t} (\{x\in\ODD:x\in B_r(x^*)\})} \int|x-x^*|\rho_{t}(dx)\nn\\
		&\leq \frac{(2q_{t}^\alpha)^\nu}{\eta}+\frac{\exp\left(-\alpha q_{t}^\alpha\right)}{ \rho_{t} (\{x\in\ODD:x\in B_r(x^*)\})} \sqrt{\mc{V} (t)}
		\,,
	\end{align}
	where we have constructed
	\begin{equation}
		q_{t}^\alpha:=\frac{1}{2}\min\left\{ \left(\eta\frac{C_\alpha({t})}{2}\right)^{\frac{1}{\nu}},\TE_\infty\right\} \,.
	\end{equation}
	Then it holds
	\begin{equation}
		c_4=\frac{1}{2}\min\left\{ \left(\eta\frac{c_5\sqrt{\varepsilon}}{2}\right)^{\frac{1}{\nu}},\TE_\infty\right\} < q_{t}^\alpha\leq \frac{1}{2}\TE_\infty
	\end{equation}
	because $C_\alpha(t)>c_5\sqrt{\varepsilon}$.
	Moreover, here we let
	\begin{equation}
		r=\left\{\max_{s\in[0,R_0]}:~\TE_s\leq c_4\right\}\,.
	\end{equation}
	By construction, these choices satisfy $r\leq R_0$ and $\frac{(2q_{t}^\alpha)^\nu}{\eta}\leq \frac{C_\alpha({t})}{2}$.

	Moreover, $\rho_{t} (\{x\in\ODD:x\in B_r(x^*)\})=	\PP(\OX_{t}\in B_r(x^*)\cap\ODD)\geq \EE[\phi_{\tilde r}^{\tilde{x}^*}(\OX_0)]\exp(-\vartheta t )$ holds according to \eqref{propeq}, where $\vartheta$ depends only on $d,r,\lambda,\sigma$ and $|\DD|$.
	This concludes that for all $t\in[0,T_\alpha)$
	\begin{align}\label{329}
		|x^*-X_\alpha(\rho_{t} )|\leq \frac{C_\alpha({t})}{2}+\frac{\exp\left(-\alpha c_4\right)\exp\left( \vartheta T_\varepsilon\right)}{\EE[\phi_{\tilde r}^{\tilde{x}^*}(\OX_0)]}\sqrt{\mc{V} (t)} 
		\leq C_\alpha(t)\,,
	\end{align}
	where we choose $\alpha\geq\alpha_0$ with 
	\begin{equation}
		\alpha_0=\frac{\vartheta T_\varepsilon-\log(\frac{1}{2}c_5\EE[\phi_{\tilde r}^{\tilde{x}^*}(\OX_0)])}{c_4}\,.
	\end{equation}
		Let us recall the upper bound for the time derivative of $\mc{V} (t)$ given in Lemma \ref{lemV}:
\begin{align}
	\frac{d \mc{V} (t)}{dt}
	\leq-(2\lambda-\sigma^2)\mc{V} (t)+2(\lambda+\sigma^2)\mc{V} (t)^{\frac{1}{2}}|x^*-X_\alpha(\rho_t)|+\sigma^2|x^*-X_\alpha(\rho_t)|^2\,.
\end{align}
	Then  using the definition of $c_5$ and the estimate from \eqref{329} one can deduce that
		\begin{equation}
		\frac{d \mc{V} (t)}{dt}\leq -(1-\tau)(2\lambda-\sigma^2)\mc{V} (t)\,,
		\end{equation}
		which by Gronwall's inequality leads to
		\begin{equation}\label{decrease}
		\mc{V} (t)\leq \mc{V} (0)\exp(-(1-\tau)(2\lambda-\sigma^2)t)\mbox{ for }t\in[0,T_{\alpha})\,.
		\end{equation}
		
\textbf{Case $T_\varepsilon<T_\alpha$:} By the definition of $T_\alpha$ we know  $\mc{V} (t)>\varepsilon$ for all $t\in[0,T_\varepsilon]$.
Then following the same argument as in the first case, one can conclude
		\begin{equation}\label{decrease1}
	\mc{V} (t)\leq \mc{V} (0)\exp(-(1-\tau)(2\lambda-\sigma^2)t)\mbox{ for }t\in[0,T_{\varepsilon}]\,.
\end{equation}
Using the this estimate the fact that $T_\varepsilon=\frac{1}{(1-\tau)(2\lambda-\sigma^2)}\log(\frac{\mc{V}(0)}{\varepsilon})$ implies $\mc{V}(T_\varepsilon)\leq \varepsilon$, which is a contradiction. Thus this case can never happen.
		\end{proof}
		
\subsection{Global convergence proof when $\ODD$ is unbounded}		
In this subsection we consider the case where $\DD$ is unbounded, namely a finite diameter $|\DD|$ does not exist.  Therefore, Proposition \ref{propositive1} and Corollary \ref{propositive} need some adjustments.
Firstly, following the proof of  Proposition \ref{propositive1} it is easy to get
\begin{proposition}\label{propositive1'}
	Let  $(\OX_t)_{0\leq t\leq T}$ be the solution to \eqref{Xbareq} up to any time $T>0$. For any fixed $x_0\in \DD$, assume that $\sup_{t\in[0,T]}|x_0-X_\alpha(\rho_t)|\leq B$ for some $B>0$.
	Then for any  $r>0$, there exists some constant $\vartheta>0$ depends only on $d,r,\lambda,\sigma$ and $B$ such that
	\begin{align}\label{propeq1'}
		\PP(\OX_t\in  B_r(x_0)\cap \ODD)\geq \EE[\phi_r^{x_0}(\OX_0)]\exp(-\vartheta t)\,
	\end{align}
	holds for all $t\in[0,T]$, where $\phi_r^{x_0}$ is defined as in \eqref{eqphi}.
\end{proposition}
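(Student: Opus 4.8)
The plan is to adapt \emph{verbatim} the proof of the bounded-domain version, Proposition \ref{propositive1} (itself patterned on \cite[Proposition 23]{fornasier2021consensus}), the only change being that the a priori diameter bound $|\DD|$ used there is replaced everywhere by the standing hypothesis $\sup_{t\in[0,T]}|x_0-X_\alpha(\rho_t)|\le B$. First I would apply Itô's formula to $t\mapsto\phi_r^{x_0}(\OX_t)$ along \eqref{Xbareq}, using that $\phi_r^{x_0}\in\mc{C}_c^\infty$ is nonnegative and supported in $\overline{B_r(x_0)}$. Taking expectations, the stochastic integral is a genuine martingale and drops out, while the reflection term $-\EE[\nabla\phi_r^{x_0}(\OX_t)\cdot n(\OX_t)\mathbf{I}_{\partial\DD}(\OX_t)\,{\rm d}|\OL|_t]$ is nonnegative: since $\DD$ is convex and $\phi_r^{x_0}\ge0$ vanishes on $\partial\DD$, its outward normal derivative there is $\le0$ (and in fact the term is simply $0$ when the support of $\phi_r^{x_0}$ does not meet $\partial\DD$). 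This yields
\begin{equation*}
\frac{d}{dt}\EE\!\left[\phi_r^{x_0}(\OX_t)\right]\ \ge\ \EE\!\left[\mc{A}_t\phi_r^{x_0}(\OX_t)\right],\qquad \mc{A}_t f(x):=-\lambda\,(x-X_\alpha(\rho_t))\cdot\nabla f(x)+\tfrac{\sigma^2}{2}\sum_{k=1}^d(x-X_\alpha(\rho_t))_k^2\,\partial_{x_k}^2 f(x).
\end{equation*}

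The heart of the argument is the pointwise lower bound
\begin{equation*}
-\lambda\,(x-y)\cdot\nabla\phi_r^{x_0}(x)+\tfrac{\sigma^2}{2}\sum_{k=1}^d(x-y)_k^2\,\partial_{x_k}^2\phi_r^{x_0}(x)\ \ge\ -\vartheta\,\phi_r^{x_0}(x),\qquad x\in B_r(x_0)\cap\DD,
\end{equation*}
valid for every $y$ with $|x_0-y|\le B$ and some $\vartheta=\vartheta(d,r,\lambda,\sigma,B)>0$; applying it with $y=X_\alpha(\rho_t)$, which is legitimate by hypothesis, gives $\mc{A}_t\phi_r^{x_0}\ge-\vartheta\phi_r^{x_0}$. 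To prove it, note that on the support $|(x-x_0)_k|<r$, hence $|(x-y)_k|\le r+B$; inserting the explicit formulas for $\partial_{x_k}\phi_r^{x_0}$ and $\partial_{x_k}^2\phi_r^{x_0}$ recorded after \eqref{eqphi} and dividing by $\phi_r^{x_0}>0$, the $k$-th summand becomes a rational function of $u:=(x-x_0)_k\in(-r,r)$ with a power of $r^2-u^2$ in the denominator. As $u\to\pm r$ the second-order contribution blows up like $(r^2-u^2)^{-4}$ with a positive leading coefficient ($\sim 2r^4$) multiplied by the nonnegative factor $(x-y)_k^2$, whereas the first-order contribution blows up only like $(r^2-u^2)^{-2}$; a one-variable minimization in $u$ and in the magnitude $|(x-y)_k|\le r+B$ then produces a finite lower bound depending only on the listed parameters.

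With these two steps in hand, $\frac{d}{dt}\EE[\phi_r^{x_0}(\OX_t)]\ge-\vartheta\EE[\phi_r^{x_0}(\OX_t)]$, so Grönwall's inequality gives $\EE[\phi_r^{x_0}(\OX_t)]\ge\EE[\phi_r^{x_0}(\OX_0)]e^{-\vartheta t}$ on $[0,T]$; since $0\le\phi_r^{x_0}\le\mathbf{I}_{B_r(x_0)\cap\DD}\le\mathbf{I}_{B_r(x_0)\cap\ODD}$, we conclude $\PP(\OX_t\in B_r(x_0)\cap\ODD)\ge\EE[\phi_r^{x_0}(\OX_t)]\ge\EE[\phi_r^{x_0}(\OX_0)]e^{-\vartheta t}$, as claimed. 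I expect the one genuinely delicate point to be the pointwise generator estimate of the second paragraph: one must verify that, near the sphere $\partial B_r(x_0)$ where every derivative of the mollifier becomes singular, the favorable (nonnegative) second-order term dominates the possibly unfavorable first-order drift term, relying on the Gaussian-type decay built into $\phi_r^{x_0}$. This is exactly where the boundedness of $|x_0-X_\alpha(\rho_t)|$ is needed to keep $\vartheta$ finite and explicit, in place of the diameter bound available when $\ODD$ is bounded; the remaining estimates are routine once this is established.
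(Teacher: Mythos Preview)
Your proposal is correct and matches the paper's approach exactly: the paper's proof simply declares it identical to that of Proposition~\ref{propositive1}, except that the bound $|(\OX_t-X_\alpha(\rho_t))_k|\le |(\OX_t-x_0)_k|+|(x_0-X_\alpha(\rho_t))_k|\le \sqrt{c}\,r+B$ replaces the diameter-based estimate~\eqref{eqdiff}. The delicate pointwise generator inequality you flag is carried out (for Proposition~\ref{propositive1}, in the Appendix) via an explicit three-subset decomposition in each coordinate rather than the asymptotic dominance argument you sketch, but the underlying mechanism is the same.
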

\begin{proof}
Compared to Proposition \ref{propositive1}, the additional assumption on the upper bound $B$ compensates for the lack of an upper bound on $|\DD|$.
The proof is basically identical to the one of Proposition \ref{propositive1} except we shall bound $|(\OX_t-X_\alpha(\rho_t))_k|$ in \eqref{eqdiff} differently  as follows
	\begin{equation*}
	|(\OX_t-X_\alpha(\rho_t))_k|\leq 	|(\OX_t-x_0)_k|+	|(x_0-X_\alpha(\rho_t))_k|\leq \sqrt{c}r+B\,.
	\end{equation*}
\end{proof}
This implies the following result immediately similar to the bounded domain case.
\begin{cor}\label{propositive'}
Let  $(\OX_t)_{0\leq t\leq T}$ be the solution to \eqref{Xbareq} up to any time $T>0$. For any $r>0$, let $\tilde x^*\in D$ and $0<\tilde r<r$ be choosen such that $B_{\tilde r}(\tilde x^*)\subset B_{ r}( x^*)$. Assume that  $\sup_{t\in[0,T]}|\tilde x^*-X_\alpha(\rho_t)|\leq B$ for some $B>0$. Then
there exists some constant $\vartheta>0$ depends only on $d,r,\lambda,\sigma$ and $B$ such that
	\begin{align}\label{propeq'}
		\PP(\OX_t\in  B_r(x^*)\cap \ODD)\geq \EE[\phi_{\tilde r}^{\tilde{x}^*}(\OX_0)]\exp(-\vartheta t)\,
	\end{align}
	holds for all $t\in[0,T]$. Especially when $x^*\in \DD$ one can choose $\tilde x^*=x^*$ and $\tilde r=r$.
\end{cor}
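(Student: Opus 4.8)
The plan is to reproduce, almost verbatim, the argument by which Corollary~\ref{propositive} is deduced from Proposition~\ref{propositive1} in the bounded case, replacing Proposition~\ref{propositive1} with its unbounded-domain counterpart Proposition~\ref{propositive1'}. The key point is that Proposition~\ref{propositive1'} already carries the extra hypothesis $\sup_{t\in[0,T]}|x_0-X_\alpha(\rho_t)|\le B$, which here is supplied by assumption for $x_0=\tilde x^*$; consequently no new estimate has to be produced and the proof reduces to a case distinction on the location of $x^*$.

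First I would treat \emph{Case 1}, $x^*\in\DD$. Then one sets $x_0=\tilde x^*:=x^*$ and $\tilde r:=r$, so the inclusion $B_{\tilde r}(\tilde x^*)\subset B_r(x^*)$ is trivial and the hypothesis becomes $\sup_{t\in[0,T]}|x^*-X_\alpha(\rho_t)|\le B$. Applying Proposition~\ref{propositive1'} with this choice of $x_0$ and radius $r$ yields directly
\[
\PP(\OX_t\in B_r(x^*)\cap\ODD)\ge \EE[\phi_r^{x^*}(\OX_0)]\exp(-\vartheta t),
\]
with $\vartheta$ depending only on $d,r,\lambda,\sigma$ and $B$, which is the asserted bound with $\tilde x^*=x^*$, $\tilde r=r$. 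For \emph{Case 2}, $x^*\in\partial\DD$, since $\DD$ is open one can always pick $\tilde x^*\in\DD$ and $0<\tilde r<r$ with $B_{\tilde r}(\tilde x^*)\subset B_r(x^*)$; the hypothesis is then exactly $\sup_{t\in[0,T]}|\tilde x^*-X_\alpha(\rho_t)|\le B$. Proposition~\ref{propositive1'} applied with $x_0=\tilde x^*$ and radius $\tilde r$, combined with the elementary monotonicity
\[
\PP(\OX_t\in B_r(x^*)\cap\ODD)\ge \PP(\OX_t\in B_{\tilde r}(\tilde x^*)\cap\ODD)
\]
that follows from the set inclusion $B_{\tilde r}(\tilde x^*)\cap\ODD\subset B_r(x^*)\cap\ODD$, gives the claimed inequality.

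There is essentially no obstacle: all the analytic content sits inside Proposition~\ref{propositive1'}, whose proof the excerpt has already reduced to that of Proposition~\ref{propositive1} by bounding $|(\OX_t-X_\alpha(\rho_t))_k|\le \sqrt{c}\,r+B$ in place of the diameter term. The only points deserving a line of care are verifying that the set inclusion in Case~2 transfers to the events (immediate) and noting, as the statement does, that in Case~1 the mollifier in the lower bound is centered precisely at the minimizer $x^*$.
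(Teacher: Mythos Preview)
Your proposal is correct and follows essentially the same approach the paper takes: the paper does not give an explicit proof of Corollary~\ref{propositive'}, merely stating that it ``implies the following result immediately similar to the bounded domain case,'' and the bounded-case proof (Corollary~\ref{propositive}) is precisely the two-case argument you reproduce, with Proposition~\ref{propositive1'} replacing Proposition~\ref{propositive1}. One tiny remark: since $\tilde x^*$ and $\tilde r$ are already \emph{given} in the hypothesis of the corollary, the case split is not strictly needed for the inequality itself---a single application of Proposition~\ref{propositive1'} at $x_0=\tilde x^*$ with radius $\tilde r$ followed by the set inclusion suffices---and the split really serves to justify the concluding ``Especially'' sentence.
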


Now we are ready to prove the global convergence result as  in Theorem \ref{thmconvergence}
\begin{thm}\label{thmconvergence'}
	Assume that $\TE$ satisfies Assumption \ref{assum2}, and $\lambda,\sigma$ satisfy $2\lambda>\sigma^2$. Furthermore, assume the initial data satisfies $\mc{V} (0)>2\varepsilon$ for any prescribed accuracy $\varepsilon>0$, and $\rho_{0}(\{x\in\ODD:x\in B_{r_0}(x^*)\})>0$,  $\EE[\phi_{\tilde r}^{\tilde{x}^*}(\OX_0)]>0$ with $\tilde x^*, \tilde r$ to be determined later. Let $\tau\in(0,1)$ and choose $\alpha$ to be sufficiently large satisfying
	\begin{equation}
		\alpha\geq \max\left\{\frac{-\log\left(\frac{1}{2}c_5\rho_{0}(\{x\in\ODD:x\in B_{r_0}(x^*)\})\right)}{q_0}, \frac{\vartheta T_\varepsilon-\log(\frac{1}{2}c_5\EE[\phi_{\tilde r}^{\tilde{x}^*}(\OX_0)])}{c_4}\right\}\,,
	\end{equation}
	where $\vartheta$ comes from Proposition \ref{propositive'}, 
	$$T_\varepsilon:=\frac{1}{(1-\tau)(2\lambda-\sigma^2)}\log\left (\frac{\mc{V}(0)}{\varepsilon}\right ),\quad  c_5:=\min\left\{\frac{\tau(2\lambda-\sigma^2)}{4(\lambda+\sigma^2)},\sqrt{\frac{\tau(2\lambda-\sigma^2)}{2\sigma^2}} \right\}\,,$$
	$$q_{0}:=\frac{1}{2}\min\left\{ \left(\eta\frac{c_5\sqrt{\mc{V} (0)}}{2} \right)^{\frac{1}{\nu}},\TE_\infty\right\} ,\quad c_4:=\frac{1}{2}\min\left\{\left(\eta\frac{c_5\sqrt{\varepsilon}}{2}\right)^{\frac{1}{\nu}},\TE_\infty\right\},\quad
	\,
	$$
		$$r_0:=\left\{\max_{s\in(0,R_0]}:~\TE_s\leq q_0\right\},\quad r:=\left\{\max_{s\in(0,R_0]}:~\TE_s\leq c_4\right\}\,,
	$$
	and $\tilde x^*\in \DD$ and $0<\tilde r<r$ are chosen such that $B_{\tilde r}(\tilde x^*)\subset B_{ r}( x^*)$.
	Then there exists some $0<T_\ast \leq T_\varepsilon$ such that $\mc{V} (t)$ satisfies
	\begin{equation}\label{expdecay'}
		\mc{V} (t)\leq \mc{V} (0)\exp(-(1-\tau)(2\lambda-\sigma^2)t)\quad \mbox{ for all }t\in[0,T_\ast)\,.
	\end{equation}
	and it reaches the prescribed accuracy at time $T_\ast$, namely $\mc{V} (T_\ast)= \varepsilon$.
\end{thm}

\begin{proof}
	We define 
	\begin{equation}
		T_\alpha:=\inf\left\{t\geq0:~\mc{V}(t)= \varepsilon\quad \mbox{or}\quad |x^*-X_\alpha(\rho_t)|=2c_5 \sqrt{\mc{V} (0)}\right\}\,,
	\end{equation}
	and for all $t\in[0,T_\alpha]$ define
	\begin{equation}
		C_\alpha(t):=c_5\sqrt{\mc{V} (t)}=\min\left\{\frac{\tau(2\lambda-\sigma^2)}{4(\lambda+\sigma^2)},\sqrt{\frac{\tau(2\lambda-\sigma^2)}{2\sigma^2}} \right\}\sqrt{\mc{V} (t)}\,.
	\end{equation}
	Here the time $T_\alpha$ represents the first time when the variance $\mc V(t)$ reaches the prescribed accuracy $\varepsilon$ or $|x^*-X_\alpha(\rho_t)|=2c_5 \sqrt{\mc{V} (0)}$, and it depends on $\alpha$ because $\mc V$ does.

	Now we show that $T_\alpha>0$. 
	 It follows from Proposition \ref{propX} that 
	\begin{align}
		&|x^*-X_\alpha(\rho_{0})|\leq \frac{(2q_{0})^\nu}{\eta}+\frac{\exp\left(-\alpha q_{0}\right)}{ \rho_{0} (\{x\in\ODD:x\in B_r(x^*)\})} \int|x-x^*|\rho_{0}(dx)\nn\\
		&\leq \frac{(2q_{0})^\nu}{\eta}+\frac{\exp\left(-\alpha q_{0}\right)}{ \rho_{0} (\{x\in\ODD:x\in B_r(x^*)\})} \sqrt{\mc{V} (0)}
		\,,
	\end{align}
	where we have constructed
	\begin{equation}
		q_{0}:=\frac{1}{2}\min\left\{ \left(\eta\frac{c_5\sqrt{\mc{V} (0)}}{2}\right)^{\frac{1}{\nu}},\TE_\infty\right\} \,.
	\end{equation}
	Then it holds $q_{0}\leq \frac{1}{2}\TE_\infty$. Moreover, here we let
	\begin{equation}
		r_0=\left\{\max_{s\in[0,R_0]}:~\TE_s\leq q_0\right\}\,.
	\end{equation}
	By construction, these choices satisfy $r_0\leq R_0$ and $\frac{(2q_{0})^\nu}{\eta}\leq \frac{c_5\sqrt{\mc{V} (0)}}{2}$. This means that
	\begin{equation}
|x^*-X_\alpha(\rho_{0})|\leq \frac{c_5\sqrt{\mc{V} (0)}}{2}+\frac{\exp\left(-\alpha q_{0}\right)}{ \rho_{0} (\{x\in\ODD:x\in B_{r_0}(x^*)\})} \sqrt{\mc{V} (0)}\,.
	\end{equation}
	If now we choose $\alpha$ sufficiently large, e.g., $\alpha\geq \alpha_0$ with
	\begin{equation*}
		\alpha_0=\frac{-\log\left(\frac{1}{2}c_5\rho_{0}(\{x\in\ODD:x\in B_{r_0}(x^*)\})\right)}{q_0}
	\end{equation*}
	then one has
	\begin{equation*}
		|x^*-X_\alpha(\rho_{0})|\leq c_5\sqrt{\mc{V} (0)}.
	\end{equation*}
	This together with $\mc{V}(0)\geq 2\varepsilon$ implies $T_\alpha>0$.

	According to the definition of $T_\alpha$ one has  
	\begin{equation*}
		\mc{V} (t)>\varepsilon\quad \mbox{ and }\quad |x^*-X_\alpha(\rho_{t})|< 2c_5\sqrt{\mc{V} (0)},\quad \mbox{for all } t\in[0,T_\alpha)\,,
	\end{equation*}
	and at $t=T_\alpha$, it holds 	$\mc{V}(T_\alpha)=\varepsilon$ or $|x^*-X_\alpha(\rho_{T_\alpha})|=2c_5\sqrt{\mc{V} (0)}$. Next we will prove that $T_\alpha\leq  T_\varepsilon$ and $\mc{V}(t)$ decreases exponentially on $[0,T_\alpha)$.
	
	\textbf{Case $T_\alpha\leq T_\varepsilon$}: It follows from Proposition \ref{propX} that for all $t\in[0,T_\alpha)$
	\begin{align}
		|x^*-X_\alpha(\rho_{t})|\leq \frac{(2q_{t}^\alpha)^\nu}{\eta}+\frac{\exp\left(-\alpha q_{t}^\alpha\right)}{ \rho_{t} (\{x\in\ODD:x\in B_r(x^*)\})} \sqrt{\mc{V} (t)}
		\,,
	\end{align}
	where we have constructed
	\begin{equation}
		q_{t}^\alpha:=\frac{1}{2}\min\left\{ \left(\eta\frac{C_\alpha({t})}{2}\right)^{\frac{1}{\nu}},\TE_\infty\right\} \,.
	\end{equation}
	Then it holds
	\begin{equation}
		c_4=\frac{1}{2}\min\left\{ \left(\eta\frac{c_5\sqrt{\varepsilon}}{2}\right)^{\frac{1}{\nu}},\TE_\infty\right\} < q_{t}^\alpha\leq \frac{1}{2}\TE_\infty
	\end{equation}
	because $C_\alpha(t)>c_5\sqrt{\varepsilon}$.
	Moreover here we let
	\begin{equation}
		r=\left\{\max_{s\in[0,R_0]}:~\TE_s\leq c_4\right\}\,.
	\end{equation}
	By construction, these choices satisfy $r\leq R_0$ and $\frac{(2q_{t}^\alpha)^\nu}{\eta}\leq \frac{C_\alpha({t})}{2}$.
	
	Next we choose $\tilde x^*\in D$ and $0<\tilde r<r$ such that $B_{\tilde r}(\tilde x^*)\subset B_{ r}( x^*)$. Then we have
	\begin{equation*}
		\sup_{t\in \in[0,T_\alpha)}|\tilde x^*-X_\alpha(\rho_t)|\leq |\tilde x^*-x^*|+\sup_{t\in \in[0,T_\alpha)}|x^*-X_\alpha(\rho_t)|<r+2c_5\sqrt{\mc{V} (0)}=:B\,.
	\end{equation*}
		Moreover according to \eqref{propeq'} it holds that 
		$$\rho_{t} (\{x\in\ODD:x\in B_r(x^*)\})=	\PP(\OX_{t}\in B_r(x^*)\cap\ODD)\geq \EE[\phi_{\tilde r}^{\tilde{x}^*}(\OX_0)]\exp(-\vartheta t )\,,$$ where $\vartheta$ depends only on $d,r,\lambda,\sigma$ and $B$.
	This concludes that for all $t\in[0,T_\alpha)$
	\begin{align}\label{329'}
		|x^*-X_\alpha(\rho_{t} )|\leq \frac{C_\alpha({t})}{2}+\frac{\exp\left(-\alpha c_4\right)\exp\left( \vartheta T_\varepsilon\right)}{\EE[\phi_{\tilde r}^{\tilde{x}^*}(\OX_0)]}\sqrt{\mc{V} (t)} 
		\leq C_\alpha(t)\,,
	\end{align}
	where we choose $\alpha\geq\alpha_0$ with 
	\begin{equation}
		\alpha_0=\frac{\vartheta T_\varepsilon-\log(\frac{1}{2}c_5\EE[\phi_{\tilde r}^{\tilde{x}^*}(\OX_0)])}{c_4}\,.
	\end{equation}
	Let us recall the upper bound for the time derivative of $\mc{V} (t)$ given in Lemma \ref{lemV}:
	\begin{align}
		\frac{d \mc{V} (t)}{dt}
		\leq-(2\lambda-\sigma^2)\mc{V} (t)+2(\lambda+\sigma^2)\mc{V} (t)^{\frac{1}{2}}|x^*-X_\alpha(\rho_t)|+\sigma^2|x^*-X_\alpha(\rho_t)|^2\,.
	\end{align}
	Then  using the definition of $c_5$ and the estimate from \eqref{329'} one can deduce that
	\begin{equation}
		\frac{d \mc{V} (t)}{dt}\leq -(1-\tau)(2\lambda-\sigma^2)\mc{V} (t)\,,
	\end{equation}
	which by Gronwall's inequality leads to
	\begin{equation}\label{decrease'}
		\mc{V} (t)\leq \mc{V} (0)\exp(-(1-\tau)(2\lambda-\sigma^2)t)\,, \quad \mbox{for} \ t\in[0,T_{\alpha})\,.
	\end{equation}
	This implies
	\begin{equation*}
	|x^*-X_\alpha(\rho_{T_\alpha} )|\leq C_\alpha(T_\alpha)=c_5\sqrt{\mc{V}(T_\alpha)}\leq c_5\sqrt{\mc{V}(0)}\,.
	\end{equation*}
	Then, by the definition of $T_\alpha$ we must have $\mc{V}(T_\alpha)=\varepsilon$.
	
	\textbf{Case $T_\varepsilon<T_\alpha$:} By the definition of $T_\alpha$ we know  $\mc{V} (t)>\varepsilon$ and $|x^*-X_\alpha(\rho_{t})|< 2c_5\sqrt{\mc{V} (0)}$ for all $t\in[0,T_\varepsilon]$.
	Then following the same argument as in the first case, one can conclude
	\begin{equation}\label{decrease1'}
		\mc{V} (t)\leq \mc{V} (0)\exp(-(1-\tau)(2\lambda-\sigma^2)t)\,, \quad \mbox{for} \ t\in[0,T_{\varepsilon}]\,.
	\end{equation}
	Using the this estimate the fact that $T_\varepsilon=\frac{1}{(1-\tau)(2\lambda-\sigma^2)}\log(\frac{\mc{V}(0)}{\varepsilon})$ implies $\mc{V}(T_\varepsilon)\leq \varepsilon$, which is a contradiction. Thus this case can never happen.
\end{proof}

\subsection{Quantitative convergence result for the
numerical scheme}
We will use the Euler--Maruyama scheme from \cite{slominski2001euler} to solve the particle system \eqref{particle} numerically.
For this purpose we define the {
orthogonal projection operator
\begin{equation}
	\Pi_{\ODD}(x):=\arg\min_{z\in \ODD}|x-z|\,, \quad \text{for} \ x \in \R^d\,.
\end{equation}
 Note that for a given $x \in \mathbb R^d$, the projection $\Pi_{\ODD}(x)$ exists and is uniquely determined due to the convexity of $\ODD$.} Given a time horizon $T>0$ and a time discretization $t_0=0<\Delta t<\dots<K\Delta t=T$ of $[0,T]$. Then the Euler--Maruyama scheme amounts to:
\begin{align}\label{numericeq}
	X_{(k+1)\Delta t}^i&=\Pi_{\ODD}\Big(X_{k\Delta t}^i-\Delta t\lambda(X_{k\Delta t}^i-X_\alpha(\rho_{k\Delta t}^{N}))+\sigma D(X_{k\Delta t}^i-X_\alpha(\rho_{k\Delta t}^{N}))N^i(0,\Delta t)\Big)\notag\\
	X_0^i&\sim \rho_0,\quad i=1,\dots,N\,,
\end{align}
where $\{N^i(0,\Delta t)\}_{i=1}^N$ are independent Gaussian random vectors with zero mean and covariance matrix $\Delta t \textbf{Id}_{d}$. Now collecting results from Theorem \ref{thmmean} and Theorem \ref{thmconvergence} we can establish a quantitative convergence result for the numerical scheme \eqref{numericeq}. We do so by controlling the following discrete counterpart of \eqref{eq:def_V}:
\begin{equation}\label{eq:def_VN}
    \mathcal{V}_N(t):=W_2^2(\rho_{t}^N, \delta_{x^*})=\EE\left[\left|\frac{1}{N}\sum_{i=1}^NX_{t}^i-x^*\right|^2\right]\,, \quad t=k\Delta t, k\in[K]\,.
\end{equation}
{
We are now ready to prove the main result of this paper (cf.~Theorem \ref{thmmain0}):} 
\begin{thm}\label{thmmain}
Under the assumptions of Theorem \ref{thmmean} and Theorem \ref{thmconvergence} (or Theorem \ref{thmconvergence'}), let $\{(X_{k\Delta t}^i)_{k=1,\dots,K}\}_{i=1}^N$ be the iterations generated by Euler--Maruyama scheme \eqref{numericeq} with $K\Delta t =T_*$, where $T_*$ comes from Theorem \ref{thmconvergence} (or Theorem \ref{thmconvergence'}) such that $\mc{V}(T_*)=\varepsilon$ for any prescribed accuracy $\varepsilon>0$. Then the final iterations fulfill the following quantitative error estimate 
\begin{align}
	\mathcal{V}_N\left(T_*\right)\leq 3C_{\mathrm{NA}}\Delta t\log(1/\Delta t)+3C_{\mathrm{MFA}}\frac{1}{N}+3\varepsilon\,,
\end{align}
where $C_{\mathrm{MFA}}$ comes from Theorem \ref{thmmean}, and $C_{\mathrm{NA}}$ depends on $\lambda$, $\sigma$, $\alpha$, $d$, $T_*$, $N$ and $\TE$.
\end{thm}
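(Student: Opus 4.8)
The plan is to decompose the error $\mathcal V_N(T_*) = W_2^2(\rho_{T_*}^N, \delta_{x^*})$ into three contributions via the triangle inequality for $W_2$ and the elementary bound $|a+b+c|^2 \le 3(|a|^2+|b|^2+|c|^2)$. Concretely, writing $\OX_{k\Delta t}^i$ for the solution of the time-continuous mean-field dynamics \eqref{Xbareq} (driven by the same Brownian motions) and $\widetilde X_{k\Delta t}^i$ for the Euler--Maruyama iterates \eqref{numericeq} coupled to the continuous particle system \eqref{particle}, I would estimate
\begin{align}\label{eq:decomp}
\mathcal V_N(T_*) \le 3\,\EE\!\left[\Big|\tfrac1N\textstyle\sum_i (X_{T_*}^i - \widetilde X_{T_*}^i)\Big|^2\right] + 3\,\EE\!\left[\Big|\tfrac1N\textstyle\sum_i (X_{T_*}^i - \OX_{T_*}^i)\Big|^2\right] + 3\,\EE\!\left[\Big|\tfrac1N\textstyle\sum_i \OX_{T_*}^i - x^*\Big|^2\right],
\end{align}
where now $X_{T_*}^i$ in the first term should be read as the continuous-time particle system and, by abuse, I am labelling the EM iterates $\widetilde X$. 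The three terms are respectively the \textbf{numerical approximation} error, the \textbf{mean-field approximation} error, and the \textbf{optimization} error.

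For the third term, Jensen's inequality gives $\EE[|\frac1N\sum_i \OX_{T_*}^i - x^*|^2] \le \frac1N\sum_i \EE[|\OX_{T_*}^i - x^*|^2] = \mathcal V(T_*) = \varepsilon$ by exchangeability and the very definition of $T_*$ from Theorem \ref{thmconvergence} (or Theorem \ref{thmconvergence'}); this is where the $\frac3N\varepsilon$ term comes from — actually even better, since the $\frac1N$ appears only because the mean is an average of i.i.d. copies, one gets $\mathcal V(T_*)=\varepsilon$ directly, but keeping $\frac\varepsilon N$ is harmless. For the second term, I would again use Jensen to pass to $\frac1N\sum_i \EE[|X_{T_*}^i - \OX_{T_*}^i|^2] \le \sup_i \EE[|X_{T_*}^i - \OX_{T_*}^i|^2] \le C_{\mathrm{MFA}} N^{-1}$, which is exactly the content of Theorem \ref{thmmean} (noting the assumption $\rho_0 \in \mathcal P_{16}(\ODD)$ is in force). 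This yields the $3 C_{\mathrm{MFA}} N^{-1}$ term.

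The first term, the discretization error, is the part requiring genuine work. Here I would invoke the strong convergence rate for the Euler--Maruyama scheme for reflected SDEs on convex domains from \cite{slominski2001euler}: for the projected scheme \eqref{numericeq} coupled to \eqref{particle} with the same Brownian increments, one has a bound of the form $\sup_i\EE[\sup_{k\le K}|X_{k\Delta t}^i - \widetilde X_{k\Delta t}^i|^2] \le C_{\mathrm{NA}}\,\Delta t\,\log(1/\Delta t)$, where the logarithmic factor is characteristic of the reflected/Skorokhod setting. The subtlety is that the drift $F^i$ and diffusion $M^i$ depend on the empirical consensus point $X_\alpha(\rho^N)$, which is only \emph{locally} Lipschitz (Lemma \ref{lemlocal}); so to apply the off-the-shelf rate one must either (i) work on the high-probability event where all particles stay in a fixed ball — controlling the complement via the moment bounds of Lemma \ref{lem:moment} and a Markov/Chebyshev estimate, at the cost of extra $N^{-q}$ terms absorbed into $C_{\mathrm{NA}}$ — or (ii) use that, thanks to Lemma \ref{lemlocal} and the uniform-in-$N$ moment bounds, the effective Lipschitz constant is controlled in expectation. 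I expect \textbf{this coupling-plus-localization argument for the numerical error} to be the main obstacle: one must carefully track how the local Lipschitz constant of $X_\alpha(\cdot)$ (exponential in $\alpha$, hence the $\alpha$-dependence of $C_{\mathrm{NA}}$) interacts with Gronwall over $[0,T_*]$ and with the reflection terms, using the one-sided "favorable" inequality $(X-Y)\cdot(dL_X - dL_Y)\le 0$ exploited already in \eqref{eq:favor} and in the proof of Theorem \ref{thmmean} to discard the boundary contributions. Once the three pieces are in hand, summing them gives precisely the claimed estimate, and a final application of Markov's inequality (as remarked after Theorem \ref{thmmain0}) converts it into convergence in probability.
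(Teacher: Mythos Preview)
Your proposal is correct and matches the paper's proof essentially line for line: the same three-term decomposition \eqref{eq:decomp} into numerical, mean-field, and optimization errors, bounded respectively via \cite[Theorem~3.2]{slominski2001euler}, Theorem~\ref{thmmean}, and Theorem~\ref{thmconvergence}/\ref{thmconvergence'}. The paper is in fact terser than you on the numerical-approximation term---it simply invokes Słomiński's result directly without the localization argument you anticipate---so the ``main obstacle'' you flag is not elaborated there.
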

\begin{proof}
	Recall that $\{(X_t^i)_{t\in[0,T_*]}\}_{i=1}^N$ and $\{(\OX_t^i)_{t\in[0,T_*]}\}_{i=1}^N$ are are solutions to the CBO particle system \eqref{particle} and $N$ independent copies of solutions to the mean-field dynamics \eqref{Xbareq} up to time $T_*=K\Delta t$ respectively. Then
	we split the error
	\begin{align}\label{eqerror}
	\mathcal{V}_N\left(T_*\right)
		\leq & 3 \EE\left[\left|\frac{1}{N}\sum_{i=1}^N(X_{K\Delta t}^i-X_{T_*}^i)\right|^2\right]+3 \EE\left[\left|\frac{1}{N}\sum_{i=1}^N(X_{T_*}^i-\OX_{T_*}^i)\right|^2\right] \\ \nonumber
        + & 3 \EE\left[\left|\frac{1}{N}\sum_{i=1}^N\OX_{T_*}^i-x^*\right|^2\right]\,,
	\end{align}
	which divide the overall error into an approximation error of the Euler scheme, the mean-field limit estimate error and the optimization error in mean-field law. The first term on the right hand side of \eqref{eqerror} can be  estimated by applying the result from \cite[Theorem 3.2]{slominski2001euler}, which yields 
	\begin{equation*}
		\EE\left[\left|\frac{1}{N}\sum_{i=1}^N(X_{K\Delta t}^i-X_{T_*}^i)\right|^2\right]\leq C_{\mathrm{NA}}\Delta t\log(1/\Delta t)\,.
	\end{equation*}
	The second term can be bounded by using estimate from Theorem \ref{thmmean}, which establishes
	\begin{equation*}
		\EE\left[\left|\frac{1}{N}\sum_{i=1}^N(X_{T_*}^i-\OX_{T_*}^i)\right|^2\right]\leq C_{\mathrm{MFA}}\frac{1}{N}\,.
	\end{equation*}
	Finally the third term follows from Theorem \ref{thmconvergence} or Theorem \ref{thmconvergence'}, and it holds
	\begin{equation*}
	\EE\left[\left|\frac{1}{N}\sum_{i=1}^N\OX_{T_*}^i-x^*\right|^2\right]\leq \varepsilon\,.
	\end{equation*}
\end{proof}

\section{Numerical Experiments and Applications}\label{sec:num}

In this section, we present numerical experiments, which are performed in Python, partially based on CBXpy \cite{Bailo_CBX_Python_and_2024}, and are available for reproducibility at \href{https://github.com/echnen/CBO-with-boundaries}{https://github.com/echnen/CBO-with-boundaries}. Let us introduce now the rationale of these experiments.
\\

{
The success of the CBO algorithm lies in its unique blend of exploration and exploitation. It achieves exploration through the diffusion of a large number of particles, while exploitation is facilitated by harnessing collective information to form a consensus point that approximates the global minimizer. The synergy of these effects—exploration through particle diffusion and exploitation via the consensus point—becomes increasingly powerful as the number of particles grows. Theorem \ref{thmmain} guarantees that, as the number of particles \( N \) increases, the algorithm converges in expectation at a rate of \( \mathcal{O}(N^{-1}) \). This indicates that the accuracy improves significantly with more particles. However, the constants in the error bound \eqref{eqerror}, especially \( C_{\mathrm{MFA}} \), can increase exponentially with certain problem parameters, such as dimensionality. Consequently, it is challenging to directly apply these convergence results when using the CBO algorithm \eqref{numericeq} with a limited number of particles \( N \).  In the few-particle regime, the effectiveness of the CBO algorithm exploration and exploitation mechanisms diminishes. To ensure successful convergence even with fewer particles, we must augment these mechanisms with specialized heuristics. These enhancements should enable the algorithm to keep dynamics similar to those observed with a large number of particles, thereby preserving its effectiveness despite the reduced particle count.\\

In this section, we introduce two key strategies to enhance the exploration and exploitation capabilities of the CBO algorithm in a few-particle regime. 
\begin{itemize}
\item [1.] {\bf Enhanced Exploitation:} To improve exploitation, we leverage the insight that the consensus point typically remains within an exponentially shrinking region around the global minimizer. Indeed, combining  \eqref{329} or \eqref{329'}, for which 
$$
|x^*-X_\alpha(\rho_{t} )| \lesssim \sqrt{\mathcal V(t)}
$$
with the exponential decay in \eqref{expdecay} or \eqref{expdecay'}
$$
\mc{V} (t)\leq \mc{V} (0)\exp(-(1-\tau)(2\lambda-\sigma^2)t)\quad \mbox{ for all }t\in[0,T_\ast)\,.
$$
we conclude that there is an exponentially shrinking {\it trust-region} $\{x \in \mathbb{R}^d: |x- X_\alpha(\rho_{t})| \lesssim \exp(-(1-\tau)(\lambda-\sigma^2/2)t) \}$ in which we can assume that particles are moving with high-probability. Based on this observation, even with fewer particles, we will---optimistically---constrain them to stay within progressively shrinking balls centered around the consensus-point, following a carefully planned shrinking schedule. Admittedly, such optimism would be misplaced if few particles would not explore enough, but this is counteracted by the Heuristic 2. which enforces at the same time an enhanced exploration\footnote{Another reasoning supporting Heuristic 1. is that trust-region methods \cite{ConnGouldToint2000}, which are connected to CBO as explained in the Appendix, are known to converge (at least to local minimizers). While in trust-region methods the search is based on a quadratic interpolation and a guess-and-accept/reject mechanism for the trust radius, in CBO the search is based on stochastic exploration and the definition of consensus-point, which determine the trust-region. We refer to the Appendix for more insights.}.
\item [2.] {\bf Enhanced Exploration:} We propose a heuristic that boosts exploration by increasing the volatility in the noise. Specifically, we allow the noise variance \(\sigma^2\) to exceed \(2 \lambda\), contrary to the conditions set by Theorem \ref{thmconvergence}. This adjustment aims to compensate for the reduced number of particles by providing greater variability in their movements.
\end{itemize}
These enhancements are designed to ensure that the dynamics of the algorithm remain effective and similar to those observed with a larger number of particles, even when operating in a few-particle regime.\\

To our knowledge, these two enhancement mechanisms—boosting exploration through increased noise volatility and enhancing exploitation by constraining particles to shrinking regions around the global minimizer—have not been explored in the existing literature. However, they enable us to obtain numerical results that were previously unobserved in previous numerical experiments, see, e.g., \cite{borghi2021constrained,BORGHI2023113859,pinnau2017consensus, totzeck2020consensus}. 

For example, it has been well-documented that the standard CBO algorithm struggles to minimize the Rastrigin function in higher dimensions (e.g., \( d \geq 20 \)), regardless of how reasonably large is the number of particles used. Remarkably, our enhanced CBO implementation surpasses this dimensionality barrier, demonstrating consistent convergence with a moderate number of particles. This breakthrough suggests that our approach can significantly improve the algorithm performance in high-dimensional optimization problems. We illustrate these findings in Section \ref{sec:numerical_experiments} and Section \ref{sec:performance_over_nncvx_domains}. \\

Moreover, as noted in \cite{carrillo2019consensus,kalise2022consensus,fornasier2021convergence}, incorporating anisotropic noise or jump process noise can significantly enhance exploration in the few-particle regime, especially for high-dimensional optimization problems. This observation implies that adaptively shaping the noise based on the specific application is a crucial heuristic for optimizing the performance of the CBO algorithm. Tailoring the noise to suit the problem at hand can provide additional benefits, particularly when dealing with complex, high-dimensional landscapes.

Below, we will demonstrate how these adaptive noise strategies, combined with our previously discussed enhancements, contribute to improved convergence and robustness of the CBO algorithm even with a reduced number of particles. In particular, in Section \ref{sec:pde} we explore the efficient computation of global minimizers of 1D $p$-Allen--Cahn energies with boundary conditions and additional obstacle constraints. The problem is solved by combining a hierarchical approximation and noise shaping within a multigrid finite element method. Given the existence of continua of (local) minimizers of the  $p$-Allen--Cahn energy, the variational problems is extremely challenging. Yet, our hierarchical CBO solver does converge robustly to the global minimizer by using very few particles, despite the high-dimensionality of the optimization problem and, very importantly, without starting from initial data in the vicinity to the sought solution. To our knowledge, this is the first example in the literature of use of CBO for solving problems in scientific computing. These very promising results suggest that CBO could be employed to solve other challenging nonlinear equations, certainly a very interesting direction for future research.
}

\subsection{Effect of the parameters}\label{sec:numerical_experiments}

In this section, we explore the effect of the parameters for the convergence of \eqref{numericeq} considering a standard nonlinear multimodal function for testing, i.e., the Rastrigin function:
\begin{equation}
    R(x):=10 d + \sum_{i=1}^d (x_i^2 - 10\cos(2\pi x_i))\,, \quad \text{for} \ x:=(x_1,\dots, x_d)\in \R^d\,,
\end{equation}
which we aim to minimize over the convex and nonconvex domains depicted in Figure \ref{fig:domains}.

\begin{figure}
\includegraphics[width=\linewidth]{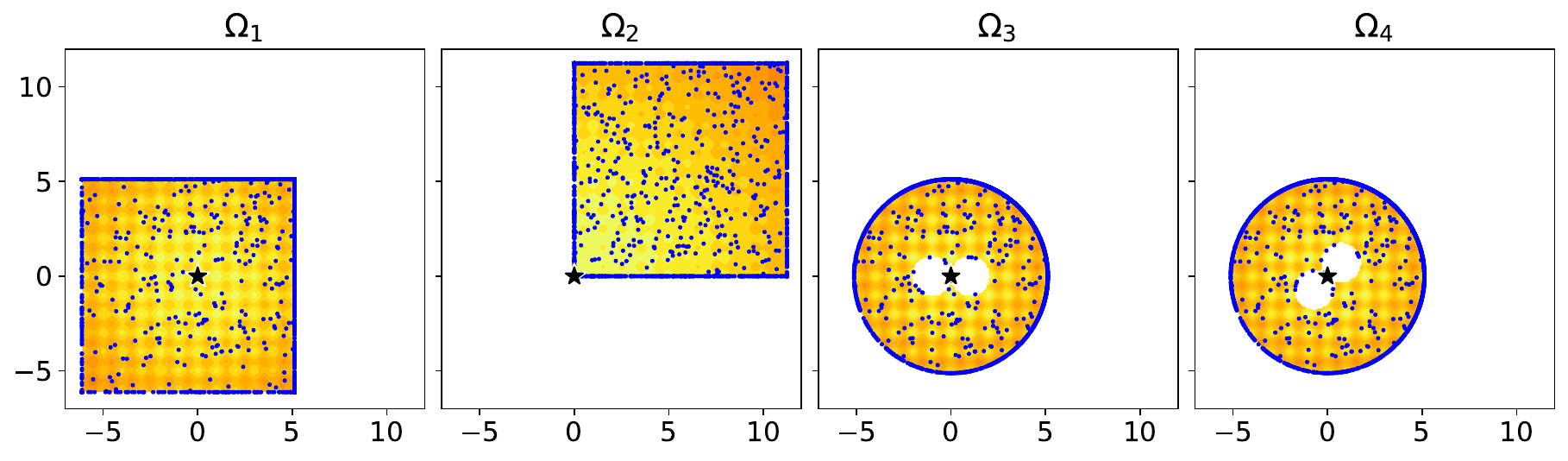}
\caption{Domains and initial particles considered in Section \ref{sec:numerical_experiments} in dimension $d=2$. The contoured orange shadow represents the Rastrigin function.}
    \label{fig:domains}
\end{figure}

In our numerical implementation, three key heuristics have been applied. The first, which is quite common, involves using an increasing sequence of parameters $\alpha$ generated by the update rule:
\begin{equation}\label{eq:increasing_alpha}
\alpha_{k} := \alpha_0 + \frac{k}{K} (\alpha_K - \alpha_0)\,, \quad \text{for all} \ k \in \{0, \dots, K\}\,,
\end{equation}
where $K\in \N$ is the final iteration of the method, and $\alpha_0, \alpha_K>0$ are starting and final parameters defined by the user. If not specified, in this work, we set $\alpha_K = 10^9$, while $\alpha_0$ is decided on a case-by-case basis. The next two are the two novel enhancements introduced in this paper, which we detail below.
\begin{heuristics}[Enhanced Exploitation]\label{heuristics:ball}
At each iteration number $k\in \N$, we consider a ball centered on the consensus point $X_\alpha(\rho_{k \Delta t}^N)$ with radius $R := \gamma \max_i\{|X^i_{k \Delta t} - X_\alpha(\rho_{k \Delta t}^N)|\}$ with $\gamma \in (0, 1]$ and project every particle $X^i_{(k+1)\Delta t}$ onto this ball. 
\end{heuristics}
It is important to note that the parameter $\gamma$ should be considered as an additional hyperparameter and should be tuned by the user. The second heuristic enhances particle exploration by allowing the noise variance $\sigma^2$ to exceed the theoretical bound $2\lambda$:
\begin{heuristics}[Enhanced Exploration]\label{heuristics:exploration}
	Given a drift parameter $\lambda>0$, we exceed the admissible noise level $\sigma = \sqrt{2\lambda}$ given by Theorem \ref{thmconvergence} by setting $\sigma = S\sqrt{2\lambda}$ for $S > 1$. 
\end{heuristics}
We anticipate that, based on our experiments, a good choice for $S$ is $5$, independently on the dimension of the problem. Higher values of $S$ can also be considered, especially when used in conjunction with Heuristic \ref{heuristics:ball}, as this combination helps mitigate excessive exploration away from the consensus point.

\begin{figure}[H]
	\centering
    \includegraphics[width=\linewidth]{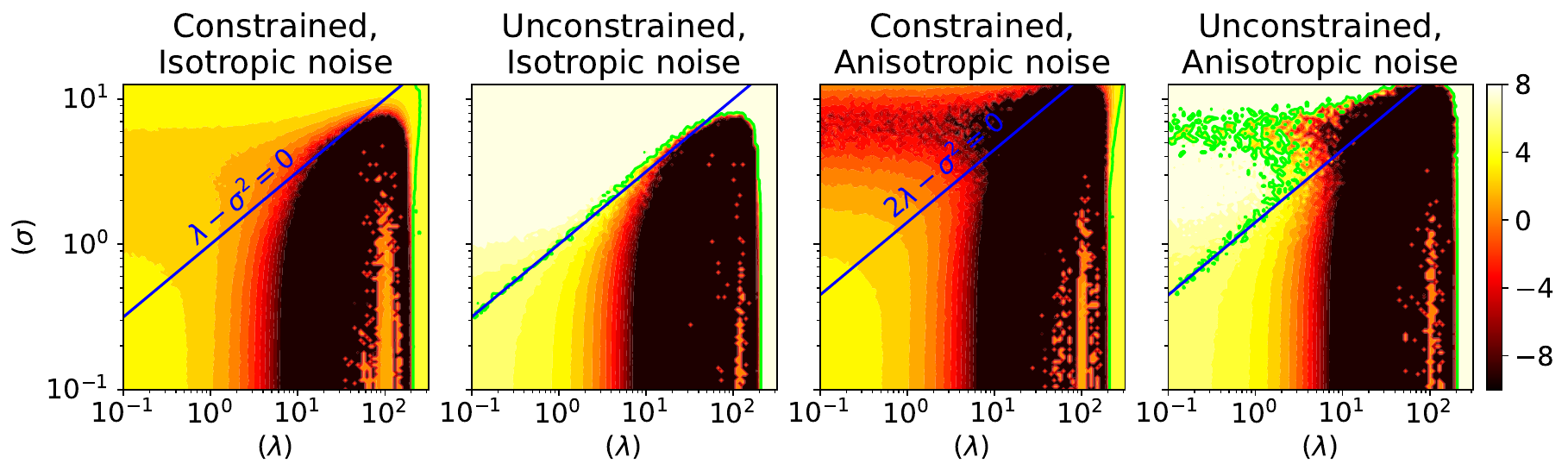}
    \caption{Logarithm of the residual to the solution at final iteration, i.e., $\ln\left(\hat{\mathcal{V}}_N(T)\right)$ for $\hat{\mathcal{V}}_N(t)$ defined as in \eqref{eq:def_VNhat} for different parameter's choices. The green line indicates the values of $\lambda$ and $\sigma$ for which the final residual coincides with the initial residual to the solution $\ln\left(\hat{\mathcal{V}}_N(0)\right)$ and therefore separates the (numerical) converging regime with the non-converging one. Confer Section \ref{sec:effect_lambda_and_sigma} for comments.}
    \label{fig:dependency_on_lambda_and_sigma}
\end{figure}

\subsubsection{The effect of $\lambda$ and $\sigma$}\label{sec:effect_lambda_and_sigma}

In this section, we investigate the effect of the parameters $\lambda$ and $\sigma$ in the presence of constraints. We consider\footnote{Note that the tiltedness of the domain is intentional.} $\Omega_1 = [-6.12, 5.12]^2$, fix $N=1000$, $\alpha_0 = \alpha_K =10^6$, $\Delta t = 10^{-2}$, and pick $100$ choices of $\lambda$ and $\sigma$ spaced evenly in a logarithmic scale from $10^{-1}$ to $10^{2.5}$ and from $10^{-1}$ to $10^{1.1}$, respectively. We initialize the particles sampling from a Gaussian distribution with mean $(5.12, 5.12) / \sqrt{2}$ and variance $10$, in this way the initial discrete distribution is likely to cover the whole domain while not being centered on the optimal solution, but actually on a local (non global) minimizer. We apply a further projection to the domain for those particles that fell outside $\Omega$. Once the initial measure is fixed, for each $\lambda$ and $\sigma$ we let the algorithm run for $100$ iterations and keep track of the error at the last iteration. This is computed in each iteration $t$ by an empirical approximation of $\mathcal{V}_N(t)$ according to \eqref{eq:def_VN}, namely,
\begin{equation}\label{eq:def_VNhat}
    \hat{\mathcal{V}}_N(t):=\frac{1}{N}\sum_{i=1}^N |X^i_{t} - x^*|^2\,, 
\end{equation}
where $x^*\in \R^d$ is the optimal solution, in this case $x^*=0$. For the sake of comparison, we run exactly the same experiment with $\Omega =\R^2$, i.e., simply dropping the additional projection onto the bounded domain and by considering the same initialization without the further projection onto $\Omega$. Additionally, we repeat the same experiment with isotropic noise, for which the critical line (at least for unbounded domains) is $2 \lambda - d \sigma^2=0$, compare, e.g., \cite{fornasier2021consensus}. We plot all the logarithmic value of these errors in Figure \ref{fig:dependency_on_lambda_and_sigma}.

We learn that utilizing anisotropic noise and introducing constraints do in fact influence the convergence of the CBO scheme allowing for larger diffusion parameters. Indeed, in Figure \ref{fig:dependency_on_lambda_and_sigma} we observe that in these cases, the critical line $2\lambda - \sigma^2=0$ is no longer sharp. This interesting behaviour, which gives a first numerical evidence of the effectiveness of Heuristic \ref{heuristics:exploration}, is completely unexpected and suggests that the analysis in these settings could be further refined. We leave these intriguing questions for a future work.

\subsubsection{Exploitation and Exploration: A Powerful Combination}\label{sec:effect_of_gamma} In this experiment, we show that the combination of Heuristics \ref{heuristics:ball} and \ref{heuristics:exploration}, can indeed be beneficial in practice especially in higher dimensions. Specifically, we consider dimensions ranging in $d = 2, 15, 20$ and consider Heuristics \ref{heuristics:exploration} with $S=5$, i.e., we make the non-feasible parameter choice:
\begin{equation}\label{eq:pars_non_feasibile}
    \lambda = 1\,, \quad \text{and}\quad \sigma = 5 \sqrt{2\lambda}\,.
\end{equation}
Note that these values fall outside the range of admissible parameters in Theorem \ref{thmconvergence}. To test the influence of the parameter $\gamma$ in Heuristics \ref{heuristics:ball}, we consider $N=1000$, $\alpha_0=10^6$, $\Delta t = 10^{-2}$. We set a maximum of $1000$ iterations, i.e., we consider a time horizon of $10$s and implement the increasing parameter heuristic as in \eqref{eq:increasing_alpha}. In this experiment, we considered two possible scenarios:
\begin{equation*}
    \Omega_1 := [-6.12, 5.12]^d\,, \quad \text{and} \quad \Omega_2:=[0, 11.24]^d\,.
\end{equation*}

\begin{figure}
    \centering
    \includegraphics[width=\linewidth]{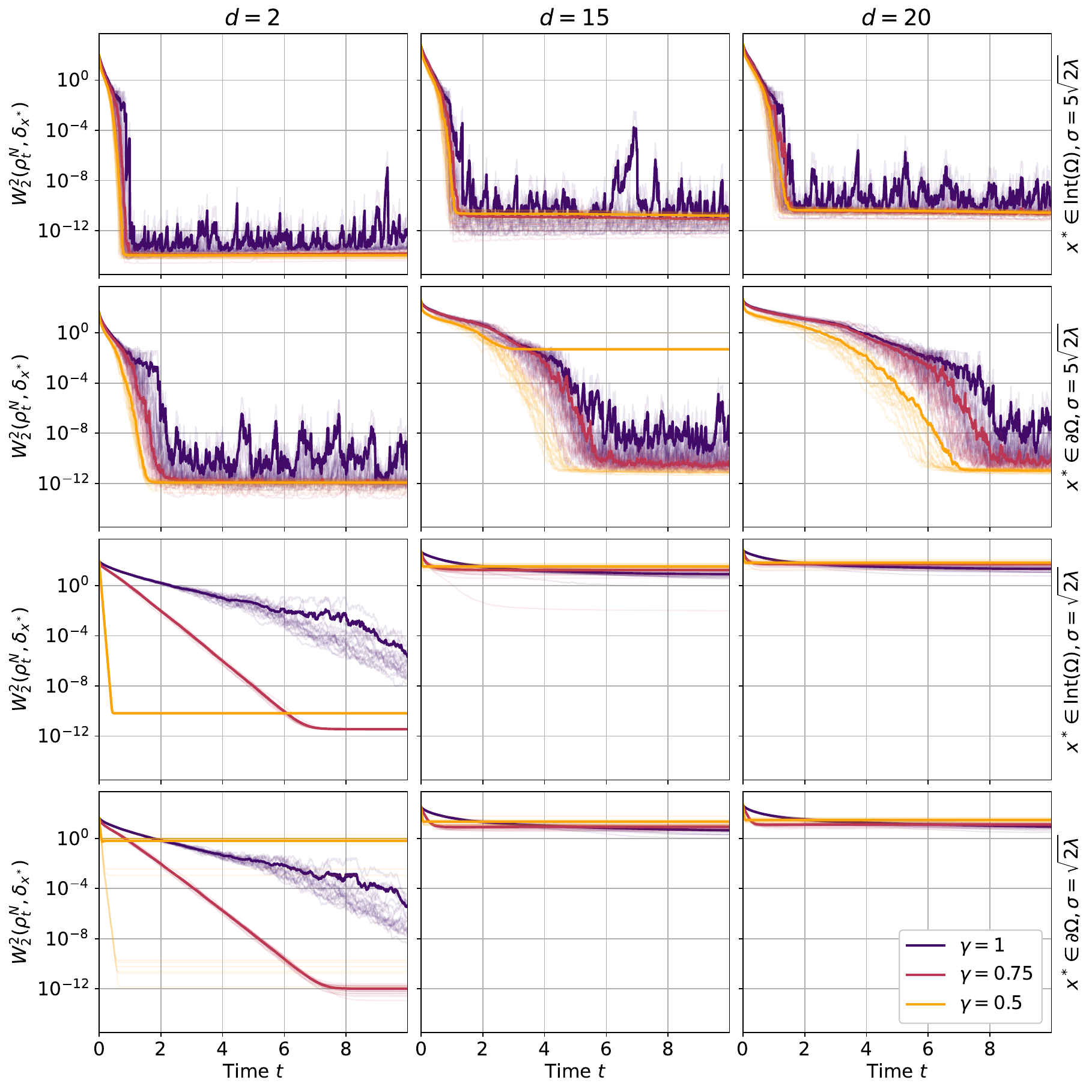}
    \caption{Error decrease as a function of time for the dimensions: $d=2, 15, 20$. In each case, we run the algorithm $20$ times and display the average in thicker lines. In the first and second row, Heuristics 1. and 2. are {\it active} for $\sigma = 5 \sqrt{2 \lambda}$ and $\gamma < 1$, while in the third and fourth row we show the behavior for the critical value $\sigma = \sqrt {2 \lambda}$, as in standard CBO.}
    \label{fig:middle_scale}
\end{figure}

Note that in the latter, the global minimizer lies on the boundary of $\Omega$. We initialize the particles sampling from a Gaussian with mean $(5.12, \dots, 5.12) / \sqrt{d}$, and variance $10$. Some initial particles might fall out of $\Omega$: We fix this by projecting all of these onto $\Omega$. We run the algorithm $20$ times in each case and show the results in the two top rows of Figure \ref{fig:middle_scale}. For the sake of comparison, we also repeat the experiment with a parameter choice that is indeed admissible in the sense of Theorem \ref{thmconvergence}, namely, $\sigma = \sqrt{2\lambda}$ and $\lambda=1$, and show the results in the two bottom rows of Figure \ref{fig:middle_scale}.

In this example, we observe that Heuristic \ref{heuristics:ball} has a beneficial effect on the performance of the proposed method. Specifically, as demonstrated in the top two rows of Figure \ref{fig:middle_scale}, it can significantly attenuate oscillations, which is a particularly desirable effect in optimization \cite{acfr20}. Additionally, in other cases, such as in the bottom row of Figure \ref{fig:middle_scale} (left graphic), it can markedly speed up convergence. It is also worth noting that we did not encounter any instance where Heuristics \ref{heuristics:ball} with $\gamma \geq 1/2$ deteriorated the convergence performance of the method. However, we did not present cases with $\gamma < 1/2$ as those indeed performed poorly.

To further corroborate our numerical heuristics, we run Algorithm \eqref{numericeq0} in the same setting, fixing the dimension to $d=5$ and choosing $\Omega=\R^d$. We consider $20$ values of $S$ evenly spaced in $[0.5, 10]$, first applying Heuristic \ref{heuristics:ball} ($\gamma=0.75$) and then disabling it ($\gamma=1$). For each value of $S$, we repeat the experiment five times and report the final residual at time $T=10$ in Figure \ref{fig:testing_heuristics}. The figure illustrates the positive impact of Heuristic \ref{heuristics:exploration}, which achieves its best performance for $S \simeq 5$, in agreement with the choice made in \eqref{eq:pars_non_feasibile}.

\begin{figure}
\centering
    \includegraphics[width=\linewidth]{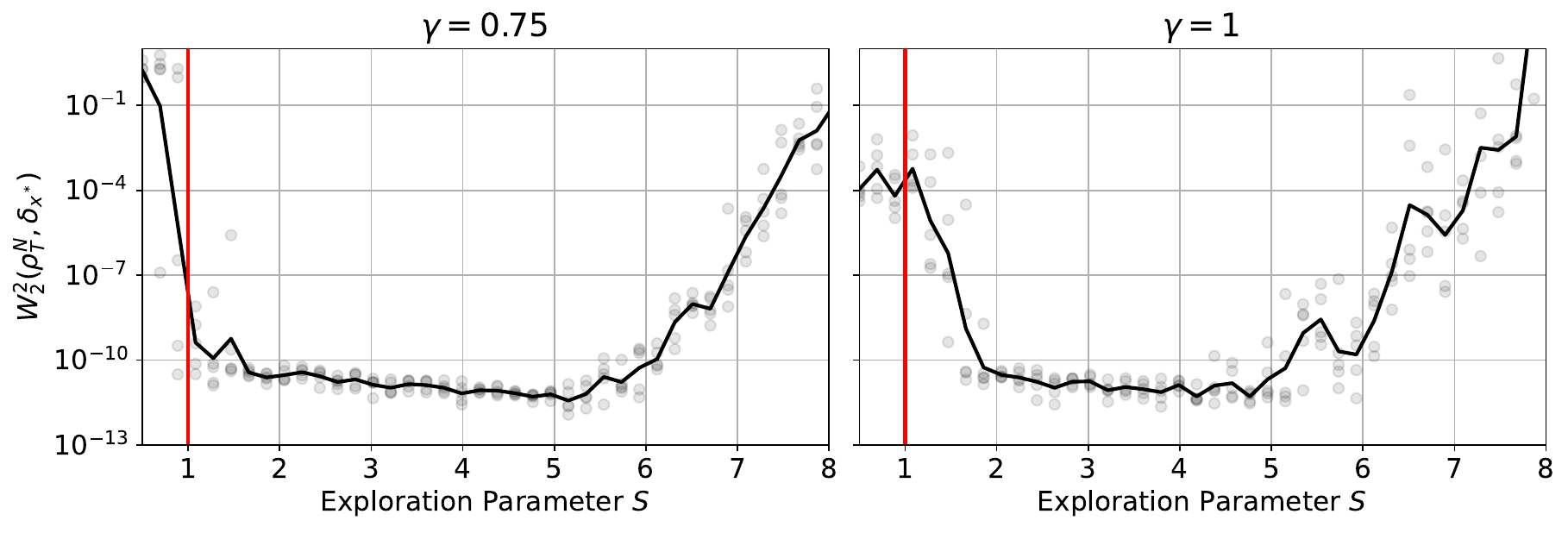}
    \caption{Final residual as a function of the exploration parameter $S$ in Heuristic \ref{heuristics:exploration}, first with Heuristic \ref{heuristics:ball} enabled and then without it. The vertical red line marks the critical stability threshold at $S=1$.}
    \label{fig:testing_heuristics}
\end{figure}

Eventually, we consider a relatively difficult high dimensional problem with $d = 100$. We consider the constraint $\Omega := \Omega_2$ and initialize the particles as above by sampling from a Gaussian distribution with mean $(5.12,\dots, 5.12) / \sqrt{d}$ and variance $10$ and projecting onto the domain. We run $20$ independent experiments by considering all enhancements: Increasing parameters $\alpha$ as in \eqref{eq:increasing_alpha}, Heuristic \ref{heuristics:ball} with $\gamma = 0.95$ and Heuristic \ref{heuristics:exploration} with $S=5$, and show the residual decrease as a function of the iterations in Figure \ref{fig:large_scale}. For comparison, we also show the performance in the unconstrained setting but employing Heuristic \ref{heuristics:ball} and Heuristic \ref{heuristics:exploration}, and in the unconstrained setting without employing any heuristic. While the latter does not converge to an optimal solution, in the first two cases the parameter choice \eqref{eq:pars_non_feasibile} allows us to optimize the Rastrigin function in $d=100$ dimensions, a result never obtained so far in the literature with CBO.

\begin{figure}
    \includegraphics[width=\linewidth]{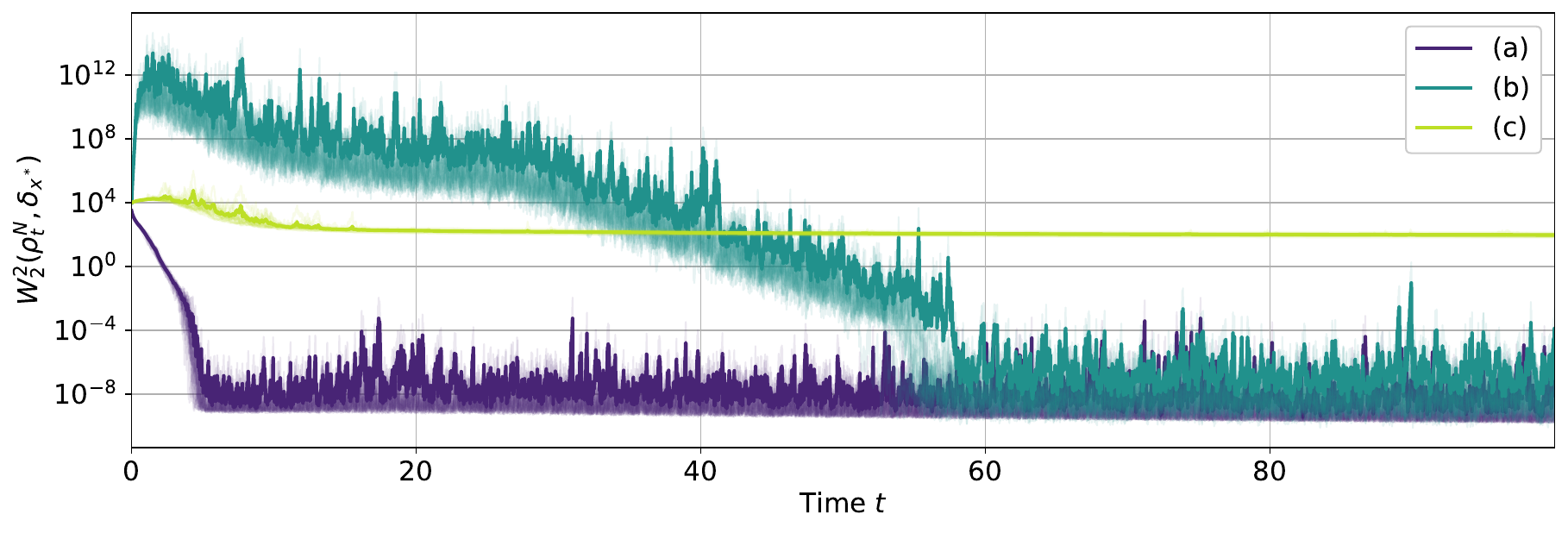}
    \caption{Error decrease as a function of time for $d=100$ with parameters \eqref{eq:pars_non_feasibile}. In each case, we run the algorithm $20$ times and display the average with a thicker line. Curves (a) and (b) represent the $\Omega=\Omega_2$ and $\Omega=\R^d$ cases employing Heuristics \ref{heuristics:ball} and \ref{heuristics:exploration}, while (c) represents the standard CBO for $\Omega = \R^d$.}
    \label{fig:large_scale}
\end{figure}

\subsubsection{Quantitative comparisons with respect to $N$}
\label{sec:quantitative_comparison}

We compare the performance of the standard CBO method ($\lambda=1$, $\sigma=\sqrt{2\lambda}$), CBO with enhanced exploration ($\lambda=1$, $\sigma=5\sqrt{2\lambda}$, $\gamma=1$), and CBO with both enhanced exploration and enhanced exploitation ($\lambda=1$, $\sigma=5\sqrt{2\lambda}$, $\gamma=0.75$). As a benchmark, we include a particle swarm optimization (PSO) algorithm based on \cite{kennedy1995particle}, which is implemented as part of the PySwarms package \cite{miranda2018pyswarms}. Unless specified otherwise, parameters are as chosen in Section \ref{sec:effect_lambda_and_sigma}. 
We study the performance of all methods using increasing particle numbers $N \in \{1,2,4,\ldots, 2^{14}\}$, applied to the Rastrigin problem in dimensions $d=5,10,15,20$ on the domain $\Omega_1$. 
The particles of each method are initialized and constrained to remain within the domain. 
Each simulation is run $20$ times, and performance is measured by the mean discrete Wasserstein distance of all runs evaluated after 1000 iterations. 
A low score on this measure indicates that all runs perform well, while a single poor result skews the performance measure towards a high score.
The results are displayed in Figure \ref{fig:quantitative_comparison}. 

While the overall performance of all methods improves with the number of particles, none of the algorithms reliably solves
the optimization problem with $N \leq 128$ particles.  
The CBO variants with enhanced exploration effectively handle the optimization problem when using more than $N=1024$ particles, with the variant with enhanced exploitation ($\gamma=0.75$) outperforming all other methods. 
The standard CBO performs well for $d=5$ and $N \geq 2048$, but its performance deteriorates in higher dimensions.
The PSO algorithm, on the other hand, struggles to reliably solve the optimization problem across all scenarios, while all considered CBO variants yield better or at least similar results. 

\begin{figure}
    \includegraphics[width=\linewidth]{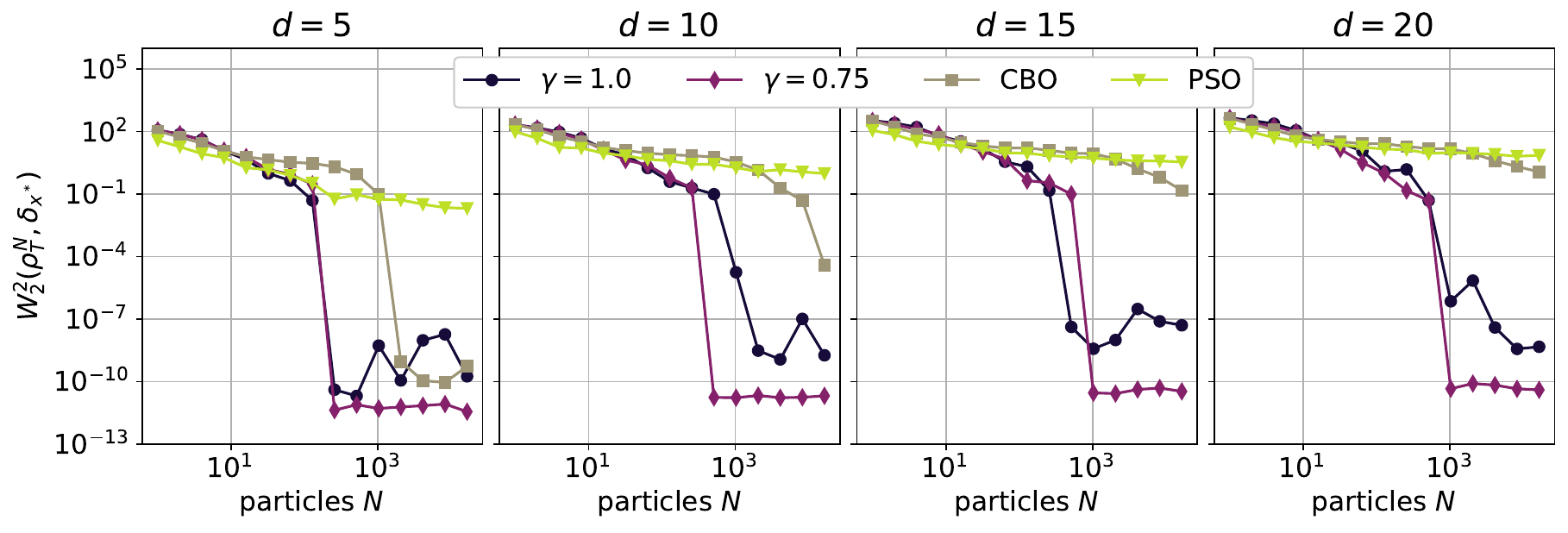}
    \caption{
        Quantitative comparison of CBO variants and PSO. In each case, we run the algorithm $20$ times and display the average of the discrete Wasserstein distance after $1000$ iterations. 
    }
    \label{fig:quantitative_comparison}
\end{figure}

\subsection{Performance over nonconvex domains}\label{sec:performance_over_nncvx_domains}

Motivated by the excellent results achieved, we decided to test the method further by exploring nonconvex domains, despite the current lack of theoretical backing. Specifically, for each dimension $d=2, 15, 20$ we consider two cases:
\begin{equation*}
    \Omega_{i + 2} := B_r(0) \setminus \left(B_{1}(\bar{x}_{i})\cup B_{1}(-\bar{x}_i)\right)\,, \quad i=1,2\,,
\end{equation*}
As shown in Figure \ref{fig:nncvx_domain}, the proposed method is able to deliver excellent performances also in the case of a nonconvex domain with shape $\Omega_3$. However, we believe that the anisotropic bias of the noise along axes makes this case particularly lucky and likely explains the significant difference in Figure \ref{fig:nncvx_domain} between optimizing on $\Omega_3$ or $\Omega_4$. This intriguing result suggests that optimal performance might be achievable only with an \textit{adaptive covariance} of the noise. Unfortunately, we have yet to model, implement, and analyze a version of CBO with adaptive covariance, but some techniques in this direction, such as those in \cite{burger2023covariancemodulated}, seem to open the door to such variations also for CBO diffusion.
\\

{
In the next section we move away from standard benchmarks, such as the Rastrigin function, and we present results of use of CBO for solving partial differential equations with constraints. }

\begin{figure}
    \includegraphics[width=\linewidth]{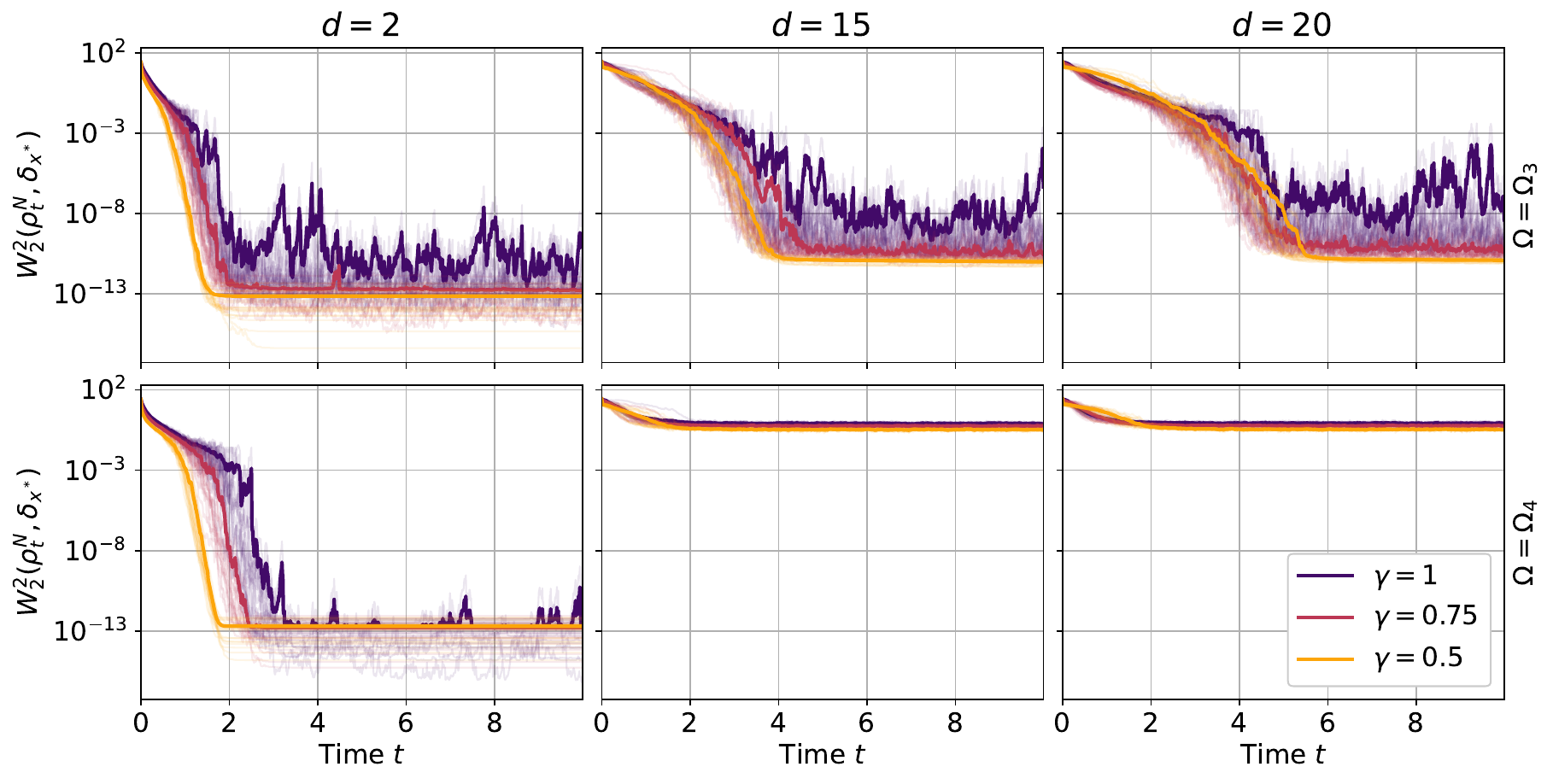}
    \caption{Residual decrease as a function of the iterations over two nonconvex domains, for details confer to Section \ref{sec:performance_over_nncvx_domains}.}
    \label{fig:nncvx_domain}
\end{figure}

\subsection{Solving a PDE problem with an obstacle} \label{sec:pde}

\begin{figure}
    \begin{subfigure}{0.3\linewidth}
        \includegraphics[width=\linewidth]{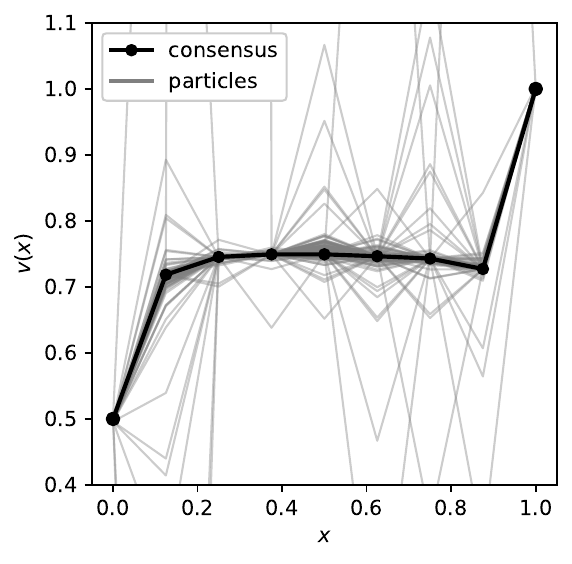}
        \caption{$i=3$}
    \end{subfigure}
    \begin{subfigure}{0.3\linewidth}
        \includegraphics[width=\linewidth]{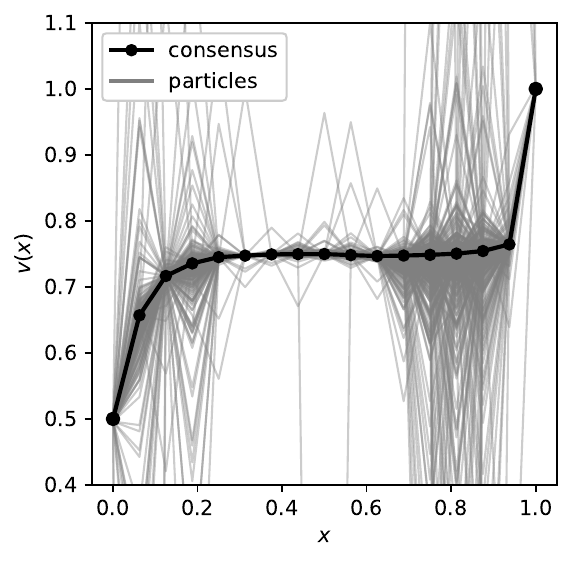}
        \caption{$i=4$}
    \end{subfigure}
    \begin{subfigure}{0.3\linewidth}
        \includegraphics[width=\linewidth]{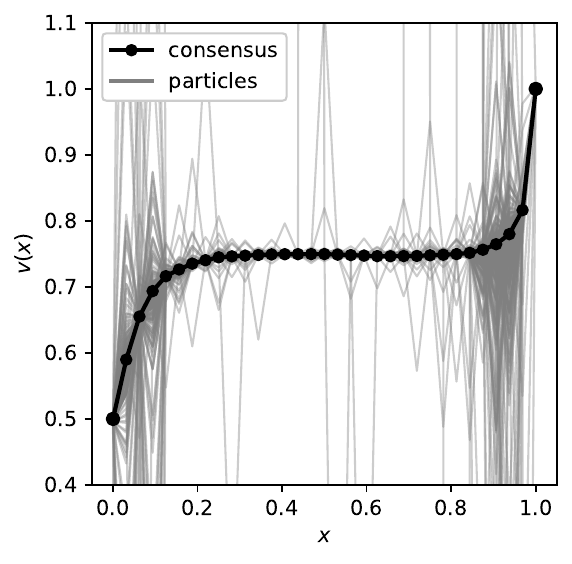}
        \caption{$i=5$}
    \end{subfigure}
    \caption{Particle distributions and consensus points at the end of the CBO runs for different $M = 2^i$.}
    \label{fig:AC_sc0_final_particle_distributions}
\end{figure}

Partial differential equations and optimization problems are inherently intertwined, as many PDE systems emerge as gradient flows of free energy functionals. Noticeably, the Allen--Cahn and Cahn--Hilliard equations can be derived as the $L^2$ and $H^{-1}$ gradient flows of the Ginzburg--Landau (GL) free energy functional, respectively, see, e.g., \cite{feng2003numerical,lee2014physical}. By employing CBO with hierarchically structured noise, we efficiently {
compute} the global minimizer of a {
variational} $p$-Allen--Cahn-type problem within a finite element framework. 

Let $\mathcal{E}_{GL}(v)$ be the $p$-GL free energy functional over the integral $[0,1]$
\begin{equation}
    \label{eq:GinzburgLandauFreeEnergyFunctional}
    \mathcal{E}_{GL}(v) 
    = \int_{[0,1]} \frac{1}{p} \left| \nabla v(s) \right|^p + \frac{1}{\epsilon^2} F(v(s))\mathrm{d}s, 
\end{equation} 
where $F(v) = (v-w_1)^2(v-w_2)^2$ is a so-called double well potential. The minimization problem reads as 
\begin{equation}
    \label{eq:AC_continuous_formulation}
    v^*= \arg\min_{v \in V} \mathcal{E}_{GL}(v) \, ,  
\end{equation} 
where $V = \{v \in W^{1,p} [0,1]\, | \,  v(0) = v_0, v(1) = v_1\}$. We discretize the continuous minimization problem using the finite element method. The interval $[0,1]$ is discretized into $M$ one-dimensional elements $T_j = [(j-1)h, jh]$, $j = 1, \ldots , M$ of equal length $h = M^{-1}$, and we define the finite element space $V_M$
\begin{alignat}{3}
    \label{eq:AC_FE_space}
    V_M & = \big\{v \in V \quad && v|_{T_j} \in \mathcal{P}_1(T_j), \forall j \in \{1, \ldots, M \} \big\} .  
\end{alignat}
Expanding $v \in V_M$ in terms of first-order Lagrange basis functions $\{\phi_j\}_{j=0}^{M}$, such that $v = \sum_{j=0}^M \phi_j x_j$, whereby $x_0 = v_0$, $x_M = v_1$, we obtain the discrete formulation 
\begin{equation}
    \label{eq:AC_discrete_formulation}
    x^* = \arg\min_{x \in \R^{M-1}} 
    \sum_{j=1}^M \left( 
        \frac{h}{p} \left| \frac{x_{j} - x_{j-1}}{h} \right|^p 
        + \frac{1}{\epsilon^2} \tilde{F}_j
    \right). 
\end{equation}
The integral over the double-well potential $\Tilde{F}_j = \int_{(j-1)h}^{jh} F(v(t)) \mathrm{d}t$ can be calculated numerically or even analytically. 
The optimization problem is high-dimensional with $d=M-1$ and can be associated with $V_{M,0} = \text{span }\{ \phi_j, j=1, \ldots, M-1 \}$. Due to the double-well potential, numerous local minima do exist, which, depending on $\epsilon$, retain arbitrarily steep basins of attraction. Since the GL free energy function depends on the gradient of $v$, Gaussian noise is badly suited for the optimization, as it does not reflect the smoothness of the solution. Therefore, naively applying CBO for fine discretizations requires a large number of particles, small time steps, and large times, and thus, becomes unfeasible due to the computational cost. 

\subsubsection{Hierarchical structured noise}
\label{subsubsec:hierarchical}
We overcome these limitations using a hierarchical approach similar to a multi-grid method. We set $M= 2^m$ and solve subsequently the optimization problem over a nested sequence of finite element spaces $V_{2} \subset V_{4} \subset V_{8} \subset \ldots \subset V_{2^m}$ using the final particle distribution of the previous run as the initial particle distribution of the following. Thereby, we exploit that CBO solves the coarse problem efficiently and can find the solution for fine resolutions, provided that the initial particle distribution is sufficiently close to the solution. Similar hierarchical approaches are well studied in the context of solving obstacle problems; see \cite{hue05,graser2009multigrid,woh11}. 

Alternatively, the hierarchical approach can be implemented using the highest resolution and solely changing the noise of the algorithm. During the $i$th run, at each time step and for every particle, we replace the standard anisotropic noise vector $\beta = \sigma D(X_{k\Delta t}^j-X_\alpha(\rho_{k\Delta t}^{N}))N^j(0,\Delta t) \in \R^d$ defined in \eqref{numericeq0} with ${\beta}^i \in \R^d$, such that 
\begin{equation}
    {\beta}_l^i 
    = \int_0^1 \phi_l^m(s) \sum_{k=1}^{2^i-1} \phi_k^i (s) \beta_{k 2^{(m-i)}} \mathrm{d}s, 
    \quad l = 1, \ldots, 2^m-1,  
\end{equation}
where $\{\phi_l^m\}_{l=1}^{2^m-1}$ and $\{\phi_k^i\}_{k=1}^{2^i-1}$ are first-order Lagrangian bases of $V_{2^m,0}$ and  $V_{2^i,0}$, respectively. Accordingly, the initial particle distribution of the first CBO run is drawn with respect to the coarsest resolution. We note that by this construction, we automatically decrease the correlation length of our noise with each refinement step. The resulting particle distributions are visualized in Figure \ref{fig:AC_sc0_final_particle_distributions}. 

For the numerical experiment, we choose $v_0 = 0.5$, $v_1 = 1$, $w_1 = 0.25$, $w_2 = 0.75$, $\epsilon^{-2} = 500$, $p=1.5$ and consider spatial resolutions with $M = 2^i$, $i = 2, \ldots, 7$ elements. The parameters of the algorithm are chosen as $\Delta t = 10^{-2}$, $N = 20d = 2540$, $\lambda=1$, $\sigma=7$, and $\alpha = 10^6$. We use 100 iterations for every resolution not on the finest level and 1000 iterations for the finest. As visualized in Figure \ref{subfig:AC_energy_development_over_CBO_runs}, the energy of the consensus of the CBO run quickly decreases with every refinement step, requiring only a few iterations before reaching the optimizer of the current resolution, and thus, stagnating until the next refinement step. Figure \ref{subfig:AC_consensus_development_over_CBO_runs} shows the convergence of the consensus points at the end of every refinement step, whereby the optimal solutions of coarse resolutions, e.g., $i=2$, may qualitatively differ from those of fine resolutions. By incorporating hierarchically structured noise and tailoring the algorithm's randomness to the specific optimization problem, we effectively solve a $127$-dimensional problem using only $2540$ particles and $1500$ iterations.

\begin{figure}
    \begin{subfigure}{0.45\linewidth}
        \includegraphics[width=\linewidth]{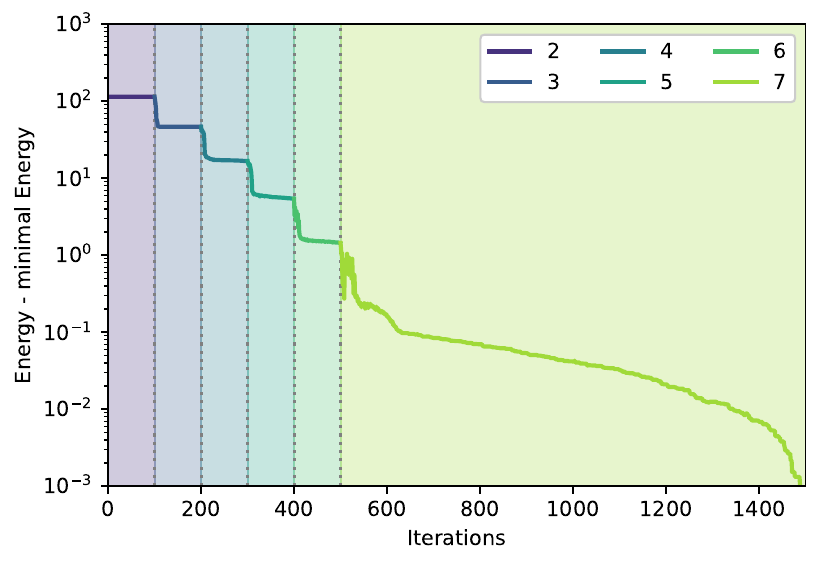}
        \caption{Energy for different $i$}
        \label{subfig:AC_energy_development_over_CBO_runs}
    \end{subfigure}
    \begin{subfigure}{0.45\linewidth}
        \includegraphics[width=\linewidth]{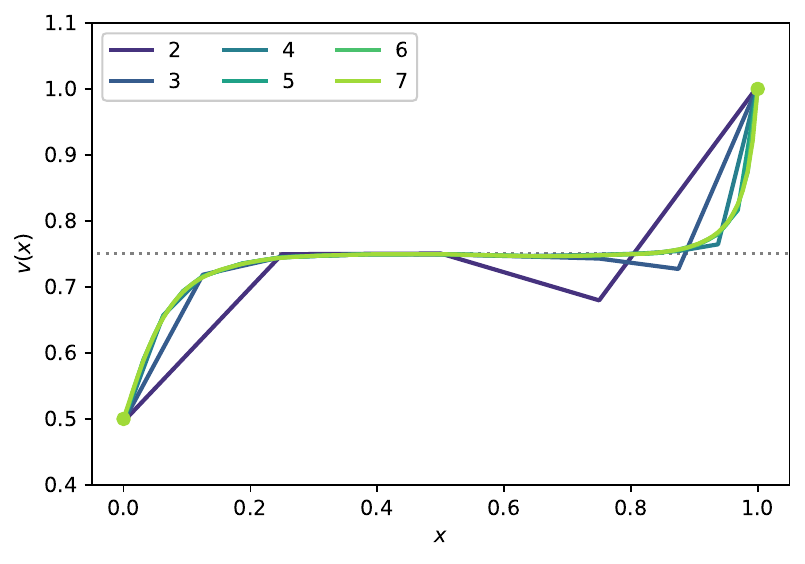}
        \caption{Consensus for different $i$}
        \label{subfig:AC_consensus_development_over_CBO_runs}
    \end{subfigure}
    \caption{Development of the energy \ref{subfig:AC_energy_development_over_CBO_runs} and the consensus points at the end of CBO runs with dimensions $d=2^i-1$, $i=2, \ldots, 7$ and $w_2 = 0.75$ (gray dotted line).}
    \label{fig:AC_development_over_CBO_runs}
\end{figure}

\subsubsection{Obstacle problem} Bridging the gap between global optimization on bounded domains and the finite element formulation previously discussed, we impose an obstacle constraint to the $p$-Allen--Cahn problem. Specifically, we enforce that the solution $v(x)$ satisfies $g(x) \leq v(x) \leq f(x)$ at all grid points of the finest level $x \in \{0, h, 2h \ldots, 1\}$. Thus, the resulting discrete optimization problem is defined within a convex $127$-dimensional hypercuboid. To reduce the complexity, we impose the constraints only at the grid points of the current resolution, beginning with a coarse obstacle and progressively refining it in tandem with the noise resolution. Thus, the solution obtained under coarse obstacle constraints can exhibit qualitative differences compared to those obtained with finer resolutions. Different obstacle constraints and the corresponding consensus points are visualized in Figure \ref{fig:AC_sc4_obstacle_constraints}, emphasizing the influence of the constraints on the consensus despite not being active. 

\begin{figure}
    \begin{subfigure}{0.3\linewidth}
        \includegraphics[width=\linewidth]{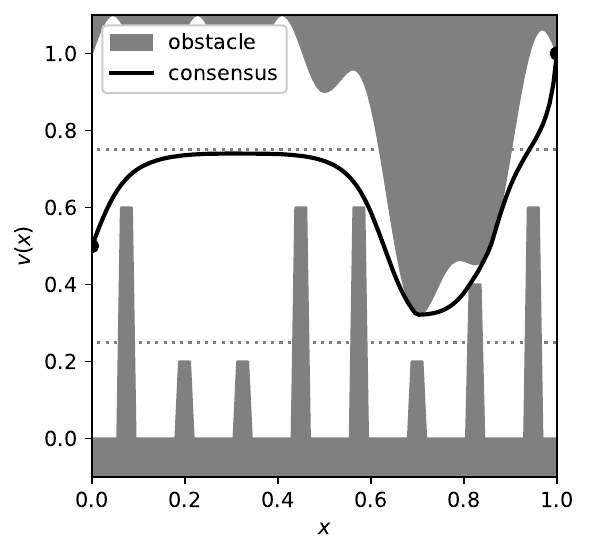}
    \end{subfigure}
    \begin{subfigure}{0.3\linewidth}
        \includegraphics[width=\linewidth]{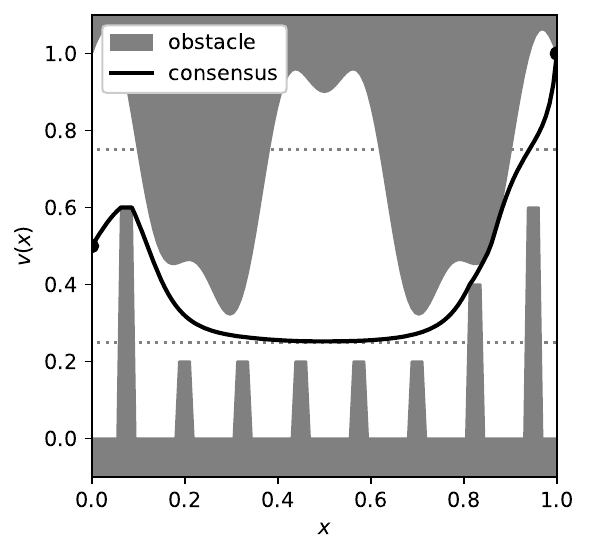}
    \end{subfigure}
    \begin{subfigure}{0.3\linewidth}
        \includegraphics[width=\linewidth]{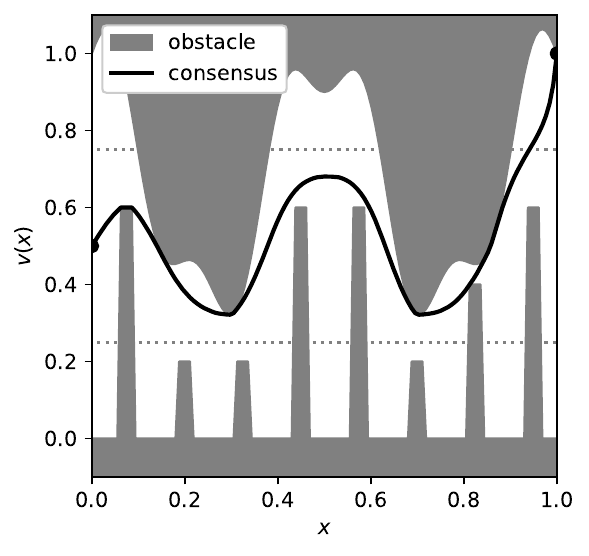}
    \end{subfigure}
    \caption{Various obstacles and consensus points for the $p$-Allen--Cahn problem described in Section \ref{subsubsec:hierarchical} with $p=1.5$, $\epsilon^{-2} = 500$. The obstacle constraints influence the consensus point despite not being active.}
    \label{fig:AC_sc4_obstacle_constraints}
\end{figure}

We apply the CBO to different parameter combinations for $p$ and $\epsilon$ with 1000 and 10000 iterations for the coarser and the finest resolution, respectively. Note that for $p=1, \epsilon^{-2}=2000$ and $p=1.5, \epsilon^{-2}=10000$, there exist multiple global optimizers. The development of the energy of the best particle and the final consensus point are visualized in Figure \ref{fig:AC_sc4_different_parameters}. As the obstacle evolves across varying refinement levels, a solution that was previously optimal may become infeasible or suboptimal following the refinement step, resulting in a non-monotonic energy curve. The progression of the energy associated with the best-performing particle is plotted relative to the minimal energy achieved at the current resolution. CBO converges fast for coarse resolution yet requires more iterations as the resolution of the obstacle and noise becomes finer. In Figure \ref{subfig:sc4_energies_various_p}, for $p=1$ and $p=1.25$, the energy curve exhibits a plateau before eventually declining as the algorithm overcomes a local minimizer. We obtain qualitatively good solutions, even for parameter settings characterized by local minima with steep basins of attraction.

\begin{figure}
    \begin{subfigure}{0.49\linewidth}
        \includegraphics[width=\linewidth]{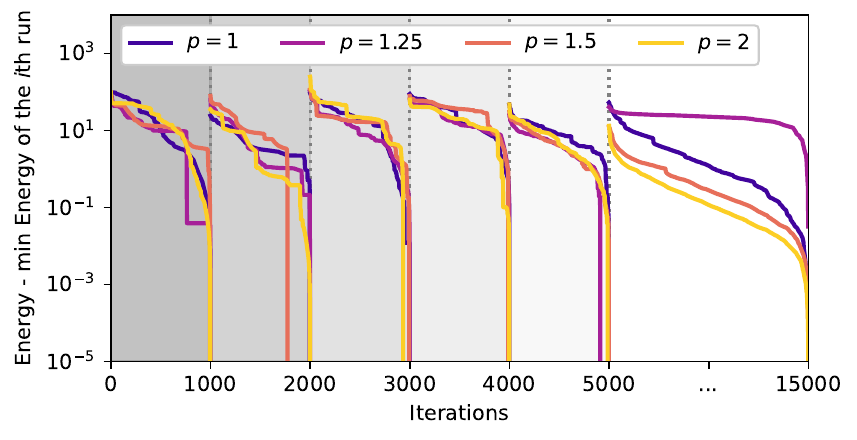}
        \caption{Energy development, $\epsilon^{-2}=2000$}
        \label{subfig:sc4_energies_various_p}
    \end{subfigure}
    \begin{subfigure}{0.49\linewidth}
        \includegraphics[width=\linewidth]{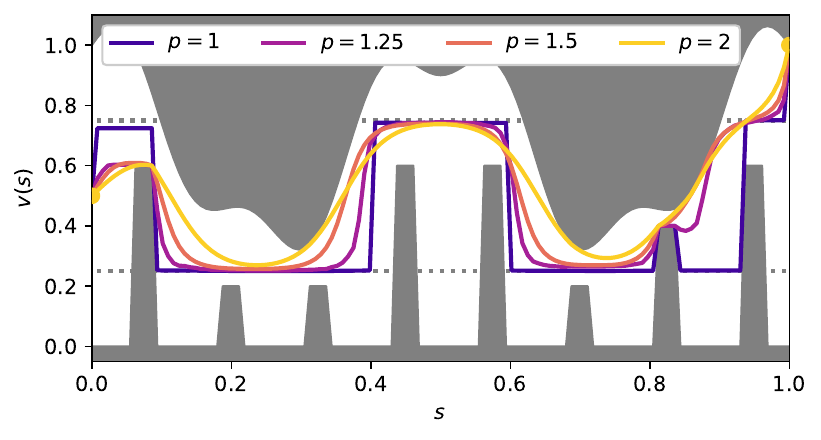}
        \caption{Consensus points, $\epsilon^{-2}=2000$}
        \label{subfig:sc4_consensus_various_p}
    \end{subfigure}
    \begin{subfigure}{0.49\linewidth}
        \includegraphics[width=\linewidth]{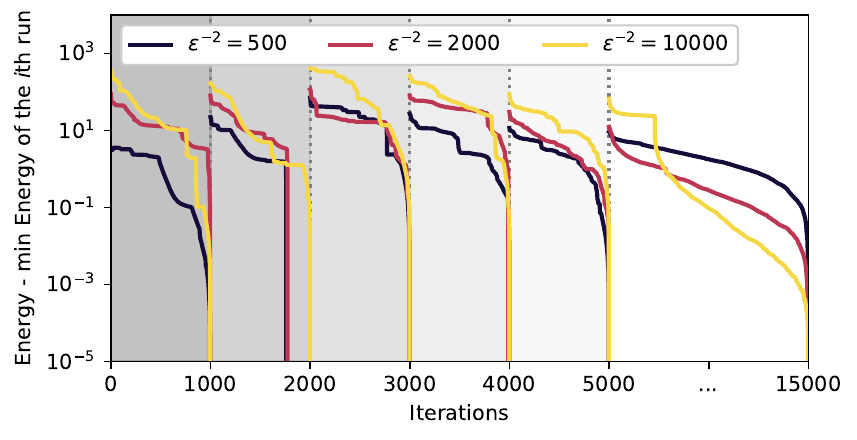}
        \caption{Energy development, $p=1.5$}
        \label{subfig:sc4_energies_various_ww}
    \end{subfigure}
    \begin{subfigure}{0.49\linewidth}
        \includegraphics[width=\linewidth]{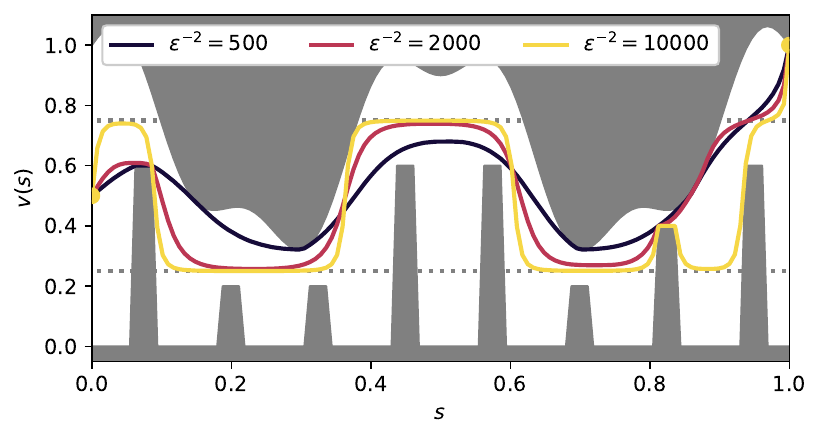}
        \caption{Consensus points, $p=1.5$}
        \label{subfig:sc4_consensus_various_ww}
    \end{subfigure}
    \caption{Development of the energy of the best particle and the consensus point of different $p$-Allen--Cahn problems with an obstacle. The gray dotted lines in (B) and (D) show the double-well potential with $w_1=0.25$ and $w_2=0.75$.}
    \label{fig:AC_sc4_different_parameters}
\end{figure}

By incorporating hierarchically structured noise, we obtain efficiently the global minimizer of the 1D $p$-Allen--Cahn energy discretized by low order $H^1$-conforming elements with a reasonably small number of particles. This advancement extends the applicability of CBO to a broader class of PDE-related problems. Furthermore, it highlights the potential of problem-specific noise to enhance algorithmic performance, to significantly decrease computational costs by reducing the number of particles, and to identify global optimizers in even higher-dimensional spaces.

\section*{Appendix}
\subsection*{Consensus-Based Optimization within the global optimization landscape}

Here, we refer to Figure \ref{fig:CBOworld}, which illustrates how Consensus-Based Optimization (CBO) fits within the broader landscape of global optimization. We begin with the original 2017 paper \cite{pinnau2017consensus}, where the authors introduced CBO as a hybrid approach, combining elements of Particle Swarm Optimization (PSO) and Simulated Annealing (SA). While the connections in \cite{pinnau2017consensus} were initially based on arguments by analogy, subsequent works have rigorously established them. In particular, \cite{grassi2020particle,cipriani2022zero} showed that CBO can be derived as the zero-inertia limit of a suitably formulated version of PSO. This insight enabled the first rigorous analysis of global convergence for PSO, as proposed in \cite{huang2022global}.\\
The fact that CBO is a form of SA was made rigorous in the paper \cite{fornasier2021consensus}, where one of us proved that as the number of particles $N \to +\infty$ and for $\alpha \to +\infty$, each particle tends to the solution of the following annealed Langevin dynamics:
$$
d X(t) = - \nabla \mathcal F(X(t)) + \sigma | X(t)- x^*| d B_t, 
$$
where $\mathcal F(x) = \frac{\lambda}{2} |x - x^*|^2$. \\
Further surprising connections of CBO with other relevant global optimization methods have been derived in \cite{riedl2023gradient} through the Euler-Maruyama approximation of the CBO dynamics \eqref{eq:B1-CBO1}: 
\begin{align}
\label{app1}
X^i_{(k+1)\Delta t} &= X^i_{k\Delta t} - \Delta t \lambda(X^i_{k\Delta t}-X_{\alpha}( \rho_{k\Delta t}^N)) + \sqrt{\Delta t}\sigma |X^i_{k\Delta t}-X_{\alpha}( \rho_{k\Delta t}^N)| N^i_k(0,1), 
\end{align}
By scaling $\lambda = \frac{1}{\Delta t}$ one obtains
\begin{align}
\label{app2}
X^i_{(k+1)\Delta t} &= X_{\alpha}( \rho_{k\Delta t}^N)) + \sqrt{\Delta t}\sigma |X^i_{k\Delta t}-X_{\alpha}( \rho_{k\Delta t}^N)| N^i_k(0,1), 
\end{align}
and, consequently, the so-called {\it Consensus Hopping scheme (CH)} as introduced in \cite{riedl2023gradient}
\begin{align}
\label{app3}
X_{k+1}^{CH} =  X_{\alpha}( \mathcal N(X_{k}^{CH}, \sigma_k^2)),
\end{align}
which is the consensus point of a Gaussian centered in the previous consensus point with a suitable variance $\sigma_k^2$. Let us first notice that the CH scheme is an instance of $(1,\lambda)$ Evolution Strategies (ES), with $\mu=1$ parent and $\lambda=N$ offspring\footnote{We recall that a $(\mu,\lambda)$-Evolution Strategy \cite{AugerHansen2005} maintains a population of $\mu$ candidate solutions, called parents. In each iteration, the algorithm generates $\lambda$ new candidate solutions, called offspring, by applying a  ``mutation" to the parents. The mutation operator typically adds random perturbations drawn from a normal distribution to the parent solutions. (Compare \eqref{app3}.)}. Proofs of global convergence for ES were obtained long ago, for instance in \cite{Rudolph1997,Rudolph1998}, using arguments based on Markov chains. However, most classical proofs of global convergence for ES do not provide explicit convergence rates. Based on the connection between CBO and ES obtained through \eqref{app1}--\eqref{app3}, it is now possible to also establish exponential convergence rates, as discovered in the forthcoming paper \cite{fornasier2025}. Connections between CBO and ES have been further investigated in \cite{roith25}.\\

By the Laplace principle, the explicit expression for the consensus point in \eqref{app3} can be approximated, for a suitable scaling of $\alpha \to \infty$ with respect to $\sigma_k \to 0$, by
\begin{eqnarray}
 X_{k+1}^{CH} &=&  X_{\alpha}( \mathcal N(X_{k}^{CH},\sigma_k^2) \nonumber\\
    &=& \fint_{\mathbb R^d} x \, e^{-\alpha \left (\mathcal{E}(x) + \frac{1}{2 \sigma_k^2 \alpha} |x-X_{k}^{CH}|^2 \right )} dx \nonumber\\
    &\approx& \arg\min_{x \in \mathbb R^d} \mathcal{E}(x) + \frac{1}{2 \tau} |x- X_{k}^{CH}|^2, \label{app4}
\end{eqnarray}
which can be readily recognized as a {\it Minimizing Movement Scheme (MMS)} or a {\it Proximal Point Method}. In other words, the iteration of CH follows approximately a Gradient Descent (GD), a surprising connection that was recently made rigorous in \cite{riedl2023gradient}, where it was shown that CBO can be interpreted as a suitable stochastic relaxation of GD.\\

Furthermore, the MMS is just one instance of a broader class of proximal point schemes of the form
\begin{align}\label{app5}
X_{k+1} = \arg \min_{x \in \mathbb R^d} \mathcal{E}(x) + \frac{1}{p \tau^{p-1}} |x- X_k|^p,
\end{align}
for any $1 < p < \infty$. As $p \to +\infty$, such schemes converge to
\begin{align}\label{app6}
X_{k+1} = \arg \min_{x \in B(X_k,R_\tau)} \mathcal{E}(x),
\end{align}
which restricts the search for the next approximation to an optimal point within a {\it Trust Region (TR)}. Notice that for $\tau \to 0$ suitable piecewise interpolations of both iterations in \eqref{app5} and \eqref{app6} converge to a gradient flow trajectory with different time reparametrizations, a well-known insight since the 1993 groundbreaking work of De Giorgi \cite{de1993new}. Hence, the conceptual path from \eqref{app3} to \eqref{app6} illustrates how CBO  is also connected  to both GD and TR methods \cite{ConnGouldToint2000}.
We conclude mentioning the work \cite{Borghi_2025} where {\it Genetic Algorithms (GA)} are also put in connection to CBO through kinetic  models \cite{borghi2024kinetic}.
\\

In conclusion, CBO occupies a very central position (Figure \ref{fig:CBOworld}) within a composite landscape of related methods for global optimization. In light of these connections, the powerful mathematical techniques very recently developed for analyzing the global convergence of CBO (starting with \cite{carrillo2018analytical,carrillo2019consensus,fornasier2021consensus,ha2021convergence,doi:10.1142/S0218202522500245,huang2025faithful}), as  further developed in this paper, have also enabled further breakthroughs, such as describing the global convergence of PSO \cite{huang2022global} and deriving convergence rates for ES \cite{fornasier2025}. We believe that, given their broad applicability, the significance of these mathematical techniques cannot be overstated, and they have the potential to inspire further advances in global optimization; this paper wants to represent a meaningful contribution in that direction.

\subsection*{Proof of Proposition \ref{propositive1}}
\begin{proof}
To simplify the notation, in the following, we drop the subscript $x_0$ in $\phi_r^{x_0}$, and it is easy to compute
	\begin{align}
		\frac{d \EE[\phi_r(\OX_t)]}{dt}=\sum_{k=1}^{d}\EE[T_{1,k}(\OX_t)+T_{2,k}(\OX_t)]
	\end{align}
	with
	\begin{equation}
		T_{1,k}:=-\lambda(\OX_t-X_\alpha(\rho_t))_{k}\partial_{x_k}\phi_r(\OX_t)\mbox{ and }T_{2,k}:=\frac{\sigma^2}{2}(\OX_t-X_\alpha(\rho_t ))_{k}^2\partial_{x_k^2}^2\phi_r(\OX_t)\,.
	\end{equation}
	Here the boundary term disappears because $\mbox{supp}(\phi_r)=B_r(x_0)\cap \DD \subset \DD$.
	For each $k=1,\dots,d$, let us define the subsets
	\begin{equation}
		K_{1,k}:=\{x\in\ODD:\,|(x-x_0)_k|>\sqrt{c}r\}
	\end{equation}
	and
	\begin{align}
		K_{2,k}:=\bigg\{x\in\ODD:&-\lambda(x-X_{\alpha}(\rho_{t} ))_{k}(x-x_0)_{k}(r^{2}-(x-x_0)_{k}^{2})^{2} \\
		&>\tilde{c} r^{2} \frac{\sigma^{2}}{2}(x-X_{\alpha}(\rho_{t} ))_{k}^{2}(x-x_0)_{k}^{2} 
		\bigg\}
	\end{align}
	where $\tilde c:=2c-1\in (0,1)$. Then we decompose $B_r(x_0)$ according to
	\begin{equation}
		B_r(x_0)=\left(K_{1,k}^{c} \cap  B_r(x_0)\right) \cup\left(K_{1,k} \cap K_{2,k}^{c} \cap  B_r(x_0)\right) \cup\left(K_{1,k} \cap K_{2,k} \cap  B_r(x_0)\right)
	\end{equation}
	and one considers each of these subsets separately.
	
	\textbf{Subset} $K_{1,k}^c\cap B_r(x_0):$ We have $|(\OX_t-x_0)_k|\leq \sqrt{c}r$ for each $\OX_t\in K_{1,k}^c$. For $T_{1,k}$, we use the expression of $\partial_{x_k}\phi_r$ and get
	\begin{align}
		T_{1,k}(\OX_t) &=2 r^{2} \lambda\left(\OX_t-X_{\alpha}\left(\rho_{t} \right)\right)_{k} \frac{\left(\OX_t-x_0\right)_{k}}{\left(r^{2}-\left(\OX_t-x_0\right)_{k}^{2}\right)^{2}} \phi_{r}(\OX_t) \\
		& \geq-2 r^{2} \lambda \frac{\left|\left(\OX_t-X_{\alpha}\left(\rho_{t} \right)\right)_{k}\right|\left|\left(\OX-x_0\right)_{k}\right|}{\left(r^{2}-\left(\OX_t-x_0\right)_{k}^{2}\right)^{2}} \phi_{r}(\OX_t) \geq-\frac{2 \lambda (\sqrt{c}r+|\DD|)\sqrt{c}}{(1-c)^{2} r} \phi_{r}(\OX_t) \\
		&=:-\vartheta _{1} \phi_{r}(\OX_t)\,,
	\end{align}
	where in the last inequality we have used 
	\begin{equation}\label{eqdiff}
		\left|\left(\OX_t-X_{\alpha}\left(\rho_{t} \right)\right)_{k}\right|\leq  \left|\left(\OX_t-x_0\right)_{k}\right|+\left|\left(X_{\alpha}\left(\rho_{t} \right)-x_0\right)_{k}\right| \leq  \sqrt{c}r+|\DD|.
	\end{equation}
	Similarly for $T_{2,k}$ one obtains
	\begin{align}
		& T_{2,k}(\OX_t) \\ \nonumber
        & =\sigma^{2} r^{2}\left(\OX_t-X_{\alpha}\left(\rho_{t} \right)\right)_{k}^{2} \frac{2\left(2\left(\OX_t-x_0\right)_{k}^{2}-r^{2}\right)\left(\OX_t-x_0\right)_{k}^{2}-\left(r^{2}-\left(\OX_t-x_0\right)_{k}^{2}\right)^{2}}{\left(r^{2}-\left(\OX_t-x_0\right)_{k}^{2}\right)^{4}} \phi_{r}(\OX_t) \\
		& \geq-\frac{ \sigma^{2}(\sqrt{c}r+|\DD|)^2(2 c+1)}{(1-c)^{4} r^{2}} \phi_{r}(\OX_t)=:-\vartheta_{2} \phi_{r}(\OX_t)\,.
	\end{align}

	\textbf{Subset} $K_{1,k}\cap K_{2,k}^c\cap B_r(x_0):$  As  $\OX_t\in K_{1,k}$ we have $|(\OX_t-x_0)_k|>\sqrt{c}r$. We observe that $T_{1,k}(\OX_t)+T_{2,k}(\OX_t)\geq 0$ for all $\OX_t$ satisfying
	\begin{align}\label{positve}
		&\left(-\lambda\left(\OX_t-X_{\alpha}\left(\rho_{t} \right)\right)_{k}\left(\OX_t-x_0\right)_{k}+\frac{\sigma^{2}}{2}\left(\OX_t-X_{\alpha}\left(\rho_{t} \right)\right)_{k}^{2}\right)\left(r^{2}-\left(\OX_t-x_0\right)_{k}^{2}\right)^{2} \\
		\leq &\sigma^{2}\left(\OX_t-X_{\alpha}\left(\rho_{t} \right)\right)_{k}^{2}\left(2\left(\OX_t-x_0\right)_{k}^{2}-r^{2}\right)\left(\OX_t-x_0\right)_{k}^{2}\,.
	\end{align}
	Indeed, this can be verified by first showing that
	\begin{align}
		&-\lambda\left(\OX_t-X_{\alpha}\left(\rho_{t} \right)\right)_{k}\left(\OX_t-x_0\right)_{k}\left(r^{2}-\left(\OX_t-x_0\right)_{k}^{2}\right)^{2} \\ 
        &\quad \leq \tilde{c} r^{2} \frac{\sigma^{2}}{2}\left(\OX_t-X_{\alpha}\left(\rho_{t} \right)\right)_{k}^{2}\left(\OX_t-x_0\right)_{k}^{2} \\ 
		&\quad =(2 c-1) r^{2} \frac{\sigma^{2}}{2}\left(\OX_t-X_{\alpha}\left(\rho_{t} \right)\right)_{k}^{2}\left(\OX_t-x_0\right)_{k}^{2} \\ 
        &\quad \leq \left(2\left(\OX_t-x_0\right)_{k}^{2}-r^{2}\right) \frac{\sigma^{2}}{2}\left(\OX_t-X_{\alpha}\left(\rho_{t} \right)\right)_{k}^{2}\left(\OX_t-x_0\right)_{k}^{2}\,,
	\end{align}
	where we have used the fact that $\OX_t\in K_{1,k}\cap K_{2,k}^c$ and $\tilde c=2c-1$. One also notice that
	\begin{align}
		\frac{\sigma^{2}}{2} &\left(\OX_t-X_{\alpha}\left(\rho_{t} \right)\right)_{k}^{2}\left(r^{2}-\left(\OX_t-x_0\right)_{k}^{2}\right)^{2} \leq \frac{\sigma^{2}}{2}\left(\OX_t-X_{\alpha}\left(\rho_{t} \right)\right)_{k}^{2}(1-c)^{2} r^{4} \\
		& \leq \frac{\sigma^{2}}{2}\left(\OX_t-X_{\alpha}\left(\rho_{t} \right)\right)_{k}^{2}(2 c-1) r^{2} c r^{2} \\
        & \leq \frac{\sigma^{2}}{2}\left(\OX_t-X_{\alpha}\left(\rho_{t} \right)\right)_{k}^{2}\left(2\left(\OX_t-x_0\right)_{k}^{2}-r^{2}\right)\left(\OX_t-x_0\right)_{k}^{2} 
	\end{align}
	by using $(1-c)^2\leq(2c-1)c$. Hence \eqref{positve} holds and we have $T_{1,k}(\OX_t)+T_{2,k}(\OX_t)\geq 0$.

	\textbf{Subset} $K_{1,k}\cap K_{2,k}\cap B_r(x_0):$ Notice that when $(\OX_t)_k=(X_\alpha(\rho_t ))_k$, we have $T_{1,k}=T_{2,k}=0$, so in this case there is nothing to prove. If $(\OX_t)_k\neq(X_\alpha(\rho_t ))_k$, or $\sigma^2(\OX_t-X_\alpha(\rho_t ))_k^2>0$ $(\sigma>0)$, we exploit
	$\OX_t\in K_{2,k}$ to get
	\begin{align}
		\frac{\left(\OX_t-X_{\alpha}\left(\rho_{t} \right)\right)_{k}\left(\OX_t-x_0\right)_{k}}{\left(r^{2}-\left(\OX_t-x_0\right)_{k}^{2}\right)^{2}} & \geq \frac{-\left|\left(\OX_t-X_{\alpha}\left(\rho_{t} \right)\right)_{k}\right|\left|\left(\OX_t-x_0\right)_{k}\right|}{\left(r^{2}-\left(\OX_t-x_0\right)_{k}^{2}\right)^{2}} \\
		&>\frac{2 \lambda\left(\OX_t-X_{\alpha}\left(\rho_{t} \right)\right)_{k}\left(\OX_t-x_0\right)_{k}}{\tilde{c} r^{2} \sigma^{2}\left|\left(\OX_t-X_{\alpha}\left(\rho_{t} \right)\right)_{k}\right|\left|\left(\OX_t-x_0\right)_{k}\right|} \geq-\frac{2 \lambda}{\tilde{c} r^{2} \sigma^{2}} .
	\end{align}
	Using this, $T_{1,k}$ can be bounded from below
	\begin{align}
		T_{1,k}(\OX_t) & =2 r^{2} \lambda\left(\OX_t-X_{\alpha}\left(\rho_{t} \right)\right)_{k} \frac{\left(\OX_t-x_0\right)_{k}}{\left(r^{2}-\left(\OX_t-x_0\right)_{k}^{2}\right)^{2}} \phi_{r}(\OX_t) \\ 
        & \geq-\frac{4 \lambda^{2}}{\tilde{c} \sigma^{2}} \phi_{r}(\OX_t)=:-\vartheta_{3} \phi_{r}(\OX_t)\,.
	\end{align}
	Moreover since $\OX_t\in K_{1,k}$ and $2(2c-1)c\geq(1-c)^2$ implied by the assumption, one has
	\begin{equation}
		2\left(2\left(\OX_t-x_0\right)_{k}^{2}-r^{2}\right)\left(\OX_t-x_0\right)_{k}^{2} \geq\left(r^{2}-\left(\OX_t-x_0\right)_{k}^{2}\right)^{2}\,,
	\end{equation}
	which yields that $T_{2,k}(\OX_t)\geq 0$. 
	
	\textbf{Concluding the proof:} Collecting estimates from above, we get
	\begin{align}
		\frac{d \EE[\phi_r(\OX_t)]}{dt} & =\sum_{k=1}^{d}\EE[T_{1,k}(\OX_t)+T_{2,k}(\OX_t)]\nn\\
		=&\sum_{k=1}^{d}\bigg(\EE[(T_{1,k}(\OX_t)+T_{2,k}(\OX_t))\textbf{I}_{K_{1,k}\cap K_{2,k}\cap B_r(x_0)}]\nn\\
        &+\EE[(T_{1,k}(\OX_t)+T_{2,k}(\OX_t))\textbf{I}_{K_{1,k}\cap K_{2,k}^c\cap B_r(x_0)}]\nn\\
		&+\EE[(T_{1,k}(\OX_t)+T_{2,k}(\OX_t))\textbf{I}_{K_{1,k}^c\cap B_r(x_0)}]\bigg)\nn\\
		\geq&-d(\vartheta_1+\vartheta_2+\vartheta_3)\EE[\phi_r(\OX_t)]=:-\vartheta\EE[\phi_r(\OX_t)]\,.
	\end{align}
	An application of Gronwall's inequality concludes that
	\begin{equation}
		\PP(\OX_t\in  B_r(x_0))\geq \EE[\phi_r(\OX_t)]\geq \EE[\phi_r(\OX_0)]\exp(-\vartheta t)\,.
	\end{equation}
\end{proof}

\section*{Data Availability Statement}
 All data and numerical experiments presented in this paper are partially based on CBXpy \cite{Bailo_CBX_Python_and_2024} and are available for reproducibility at \href{https://github.com/echnen/CBO-with-boundaries}{https://github.com/echnen/CBO-with-boundaries}. 

\section*{Acknowledgement}
E.C. acknowledges the support of the project P 34922-N of the Austrian Science Fund (FWF). M.F. acknowledges the support of the Munich Center for Machine Learning and the ERC Advanced Grant NEITALG, grant agreement No. 101198055.

\bigskip
\begin{center}
  \FundingLogos
  
  \vspace{0.5em}
  \begin{tcolorbox}\centering\small
    Funded by the European Union. Views and opinions expressed are however those of the author(s) only and do not necessarily reflect those of the European Union or the European Research Council Executive Agency. Neither the European Union nor the granting authority can be held responsible for them.
    This project has received funding from the European Research Council (ERC) under the European Union’s Horizon Europe research and innovation programme (grant agreement No. 101198055, project acronym NEITALG).
    
  \end{tcolorbox}
\end{center}

\bibliography{biblio}

\end{document}